\numberwithin{equation}{section}
\def\3bar{{|\hspace{-.02in}|\hspace{-.02in}|}}
\def\E{{\mathcal{E}}}
\def\T{{\mathcal{T}}}
\def\Q{{\mathcal{Q}}}
\def\pT{{\partial T}}
\def\LL{{\mathcal{L}}}
\def\bu{{\mathbf{u}}}
\def\bv{{\mathbf{v}}}
\def\bn{{\mathbf{n}}}
\def\be{{\mathbf{e}}}
\newtheorem{remark}{Remark}[section]
\newtheorem{algorithm}{Algorithm}[section]
\title{A Modified Primal-Dual Weak Galerkin Finite Element Method for Second Order Elliptic Equations in Non-Divergence Form}
\author{
Chunmei Wang\thanks{Department of Mathematics \& Statistics, Texas Tech University, Lubbock, TX 79409, USA (chunmei.wang@ttu.edu). The research of Chunmei Wang was partially supported by National Science Foundation Award DMS-1849483.} 
%\and Junping Wang\thanks{Division of Mathematical Sciences, National Science Foundation, Arlington, VA 22230
%(jwang@nsf.gov). The research of Junping Wang was supported by the
%NSF IR/D program, while working at National Science Foundation.
%However, any opinion, finding, and conclusions or recommendations
%expressed in this material are those of the author and do not
%necessarily reflect the views of the National Science Foundation.}
}
\begin{document}

\maketitle

\begin{abstract}
A modified primal-dual weak Galerkin (M-PDWG) finite element method is designed for the second order elliptic equation in non-divergence form. Compared with the existing PDWG methods proposed in \cite{wwnondiv}, the system of equations resulting from the M-PDWG scheme could be equivalently simplified into one equation involving only the primal variable by eliminating the dual variable (Lagrange multiplier). The resulting simplified system thus has significantly fewer degrees of freedom than the one resulting from existing PDWG scheme. In addition, the condition number of the simplified system could be greatly reduced when a newly introduced bilinear term in the M-PDWG scheme is appropriately chosen. Optimal order error estimates are derived for the numerical  approximations in the discrete $H^2$-norm, $H^1$-norm and $L^2$-norm respectively. 
 Extensive numerical results are demonstrated for both the smooth and non-smooth coefficients on convex and non-convex domains to verify the accuracy of the theory developed in this paper.
\end{abstract}

\begin{keywords} primal-dual, weak Galerkin, finite element methods, non-divergence form, Cord\`es condition, polyhedral meshes.
\end{keywords}

\begin{AMS}
65N30, 65N12, 35J15, 35D35
\end{AMS}

\pagestyle{myheadings}
\section{Introduction}
In this paper, we consider the second order elliptic equation in non-divergence form which seeks an unknown function $u=u(x)$ such that
\begin{equation}\label{1}
\begin{split}
\sum_{i, j=1}^d a_{ij}\partial^2_{ij}u&=f,\quad \text{in}\
\Omega,\\
u &=0,\quad \text{on}\ \partial\Omega,
\end{split}
\end{equation}
where $\Omega \subset \mathbb R^d(d=2,3)$ is an open bounded domain with Lipschitz continuous boundary $\partial\Omega$, the load function $f\in L^2(\Omega)$, and the coefficient tensor $a=(a_{ij})_{d\times d}\in [L^\infty(\Omega)]^{d\times d}$ is
symmetric, uniformly bounded and positive definite in the sense that there exist constants $C_1>0$ and $C_2>0$ such that
\begin{equation}\label{matrix}
C_1\xi^T\xi\leq \xi^T a \xi\le \, C_2\xi^T\xi, \qquad \forall
\xi\in \mathbb{R}^d,\ x\in\Omega.
\end{equation}
For the simplicity of notation, denote by $\LL:=\sum_{i, j=1}^d a_{ij}\partial_{ij}^2$ the second order
partial differential operator.

The second order elliptic problem in non-divergence form arises in various applications such as probability and stochastic processes \cite{Fleming}. This type of problem also plays an important role in the research of fully nonlinear partial differential equations in conjunction with linearization techniques (e.g., the Newton's iterative method) \cite{brenner-0, neilan}. In such applications, the coefficient tensor $a(x)$ is often hardly smooth. 
Therefore, it is crucial to develop effective numerical methods for the model problem (\ref{1}) with nonsmooth coefficient tensor. Readers are referred to \cite{wwnondiv} for more details of recent work developed for the model problem (\ref{1}) .

The goal of this paper is to develop a modified primal-dual weak Galerkin (M-PDWG) scheme for the second order elliptic problem in nondivergence form (\ref{1}), which is totally different from and  advantageous over the one proposed in \cite{wwnondiv}. 
The system of equations arising from the M-PDWG scheme could be equivalently simplified into one equation by eliminating its dual variable (Lagrange multiplier). The simplified system involves only the primal variable and thus has significantly fewer degrees of freedom compared to the PDWG scheme proposed in \cite{wwnondiv}. The main contributions of the present paper are (1)   the condition number of the simplified system could be significantly reduced when the $c(\cdot, \cdot)$ term is appropriately chosen; (2) the computational complexity of the simplified system is greatly reduced. Our theory for the M-PDWG method is based on two assumptions: (1) the $H^2$-regularity of the exact solution of the model problem (\ref{1}); and (2) the coefficient tensor $a(x)$ is piecewise continuous and satisfies the uniform ellipticity condition (\ref{matrix}). Optimal order error estimates are established for the primal variable in a discrete $H^2$-norm and for the dual variable in the $L^2$-norm. Moreover, the convergence theory is derived for the primal variable in the $H^1$ norm and $L^2$ norm under some smoothness assumptions for the coefficient tensor $a(x)$. Numerical examples are presented to illustrate the accuracy of the theory developed for the M-PDWG method.

The paper is organized as follows. In Section \ref{Section:Preliminaries}, we present the weak formulation for the model problem (\ref{1}). Section \ref{Section:Hessian} is devoted to a review of weak second order differential operator and its discretization. In Section \ref{Section:PD-WGFEM}, we describe the  M-PDWG finite element method for the model
problem (\ref{1}). Section 5 presents a simplified system resulting form the M-PDWG method proposed in Section 4.
Section \ref{Section:ExistenceUniquess} is devoted to a stability analysis for the M-PDWG scheme. Section 7 presents the error equations for the numerical scheme. In Section \ref{Section:ErrorEstimates}, we derive an optimal order
error estimate for the M-PDWG method in a discrete $H^2$ norm. Section \ref{Section:H1L2Error} establishes some error estimates in the usual $H^1$ norm and $L^2$ norm for the primal variable. In Section \ref{Section:NE}, the numerical experiments are presented for the M-PDWG scheme for smooth and non-smooth coefficient tensor $a(x)$ on convex and non-convex domains.

\section{Variational Formulations}\label{Section:Preliminaries}
We shall briefly review the weak formulation of the second order elliptic model problem  (\ref{1}) in non-divergence form \cite{wwnondiv}.

\begin{theorem}\label{THM:H2regularity} \cite{smears} Assume (1) $\Omega\subset \mathbb R^d$ is a bounded convex domain; (2) the coefficient tensor $a=(a_{ij}) \in [L^\infty(\Omega)]^{d\times d}$ satisfies the ellipticity condition (\ref{matrix}); and (3) the Cord\`es condition holds true; i.e., there exists an $\varepsilon\in (0,1]$ such that
\begin{equation}\label{cordes}
\frac{\sum_{i,j=1}^d a_{ij}^2}{(\sum_{i=1}^d a_{ii})^2} \leq
\frac{1}{d-1+\varepsilon}\qquad \mbox{in}\ \Omega.
\end{equation} 
There exists a unique strong solution $u\in H^2(\Omega)\cap H_0^1(\Omega)$ of the model problem (\ref{1}) satisfying
\begin{equation}\label{Assumption:H2regularity}
\|u\|_{2} \leq C \|f\|_0,
\end{equation}
for any given $f\in L^2(\Omega)$, where $C$ is a constant depending on $d$, the diameter of $\Omega$, $C_1$, $C_2$ and $\varepsilon$.
\end{theorem}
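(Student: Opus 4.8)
The plan is to reduce the variable-coefficient operator $\LL$ to a subordinate perturbation of the Laplacian via the Cord\`es renormalization, and then to close the argument using the fact that on a convex domain the full Hessian of an $H^2\cap H_0^1$ function is controlled by its Laplacian. Concretely, I would introduce the scalar weight
\[
\gamma(x):=\frac{\sum_{i=1}^d a_{ii}(x)}{\sum_{i,j=1}^d a_{ij}^2(x)},
\]
which is well defined and bounded above and below in view of the ellipticity condition (\ref{matrix}) (the denominator is bounded below since $a$ is positive definite, and numerator and denominator are controlled by $C_2$). Multiplying $\LL u=f$ by $\gamma$ gives the equivalent equation $\gamma\LL u=\gamma f$, and the role of the Cord\`es hypothesis is precisely to make $\gamma\LL$ differ from $\Delta$ by a strictly contractive term.

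The first key step is a pointwise inequality. For any symmetric matrix $M$ I would write $\gamma\,a:M-\operatorname{tr}M=(\gamma a-I):M$ and estimate by Cauchy--Schwarz in the Frobenius inner product, giving $|\gamma\,a:M-\operatorname{tr}M|\le|\gamma a-I|\,|M|$. A direct computation using $\gamma=\operatorname{tr}(a)/|a|^2$ yields
\[
|\gamma a-I|^2=\gamma^2|a|^2-2\gamma\operatorname{tr}(a)+d=d-\frac{(\operatorname{tr}a)^2}{|a|^2},
\]
and the Cord\`es condition (\ref{cordes}) is exactly the statement $(\operatorname{tr}a)^2/|a|^2\ge d-1+\varepsilon$, whence $|\gamma a-I|\le\sqrt{1-\varepsilon}<1$. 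Taking $M=D^2w$ produces the pointwise bound $|\gamma\LL w-\Delta w|\le\sqrt{1-\varepsilon}\,|D^2w|$ almost everywhere in $\Omega$, and after integration $\|\gamma\LL w-\Delta w\|_0\le\sqrt{1-\varepsilon}\,\|D^2w\|_0$.

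The second ingredient is the Miranda--Talenti estimate: for every $w\in H^2(\Omega)\cap H_0^1(\Omega)$ on a convex $\Omega$ one has $\|D^2w\|_0\le\|\Delta w\|_0$. With these two facts I would run a Banach fixed-point argument on $V:=H^2(\Omega)\cap H_0^1(\Omega)$, equipped with the norm $\|D^2\cdot\|_0$, which is equivalent to $\|\cdot\|_2$ by Poincar\'e. Define $T:V\to V$ by letting $Tw$ be the unique solution in $V$ of the Poisson problem $\Delta(Tw)=\Delta w-\gamma\LL w+\gamma f$, solvable with the required $H^2$-regularity precisely because $\Omega$ is convex; any fixed point of $T$ satisfies $\gamma\LL u=\gamma f$, hence $\LL u=f$. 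For $w_1,w_2\in V$,
\[
\|D^2(Tw_1-Tw_2)\|_0\le\|\Delta(Tw_1-Tw_2)\|_0=\|(I-\gamma a):D^2(w_1-w_2)\|_0\le\sqrt{1-\varepsilon}\,\|D^2(w_1-w_2)\|_0,
\]
so $T$ is a contraction and the contraction mapping theorem supplies a unique $u\in V$. The a priori bound then follows by writing $\Delta u=(I-\gamma a):D^2u+\gamma f$, taking $L^2$-norms, and absorbing: $(1-\sqrt{1-\varepsilon})\|\Delta u\|_0\le\|\gamma f\|_0$, which with $\|\gamma\|_\infty\le C$ and Poincar\'e gives $\|u\|_2\le C\|f\|_0$ with $C$ depending on $d,\operatorname{diam}\Omega,C_1,C_2,\varepsilon$ as claimed.

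The main obstacle is the convex-domain estimate $\|D^2w\|_0\le\|\Delta w\|_0$. It is the only place where convexity of $\Omega$ enters and is the genuine analytic input: the integration-by-parts identity $\int_\Omega|D^2w|^2=\int_\Omega|\Delta w|^2+(\text{boundary curvature term})$ holds for smooth $w$ on smooth domains, and the boundary term has the favorable sign only when $\partial\Omega$ is convex; extending it to a merely Lipschitz convex $\Omega$ and to general $w\in H^2\cap H_0^1$ requires approximating $\Omega$ by smooth convex domains together with a density argument. A secondary technical point is that $a$ is only $L^\infty$, so the pointwise Cord\`es inequality and the identity $\gamma\LL w=\Delta w-(I-\gamma a):D^2w$ must be read as relations between $L^2$ functions; this is harmless but should be stated with care.
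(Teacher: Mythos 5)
Your argument is correct. The paper does not prove this theorem at all---it is quoted verbatim from the cited reference of Smears and S\"uli---and your proof is essentially the standard one given there: the two pillars are exactly the pointwise Cord\`es estimate $|\gamma a - I|\le\sqrt{1-\varepsilon}$ for the renormalized coefficient and the Miranda--Talenti inequality $\|D^2w\|_0\le\|\Delta w\|_0$ on convex domains (the cited reference packages the final step as a strong-monotonicity/Lax--Milgram argument for the form $\int_\Omega \gamma\LL u\,\Delta v$ rather than your Banach fixed-point iteration, but the two are interchangeable and rest on the identical estimates). You have also correctly identified the one genuinely delicate analytic input, namely extending Miranda--Talenti from smooth to merely convex Lipschitz domains by approximation.
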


Throughout this paper, we assume the model problem (\ref{1}) has a
unique strong solution in $H^2(\Omega)\cap H_0^1(\Omega)$ with a priori estimate \eqref{Assumption:H2regularity}.
 
The variational formulation of the model problem (\ref{1}) seeks $u\in X=H^2(\Omega)\cap H_0^1(\Omega)$ such that
\begin{equation*}\label{Abstract-Formulation}
b(u, \sigma) = (f, \sigma)\qquad \forall \sigma\in Y=L^2(\Omega),
\end{equation*}
where
\begin{equation}\label{b-form}
b(u, \sigma) = (\LL u, \sigma).
\end{equation}
  The regularity assumption (\ref{Assumption:H2regularity}) implies that the bilinear form $b(\cdot,\cdot)$ satisfies the inf-sup condition
$$
\sup_{v\in X, v\neq 0} \frac{b(v,\sigma)}{\|v\|_X} \ge  \alpha
\|\sigma\|_Y,
$$
for all $\sigma\in Y$, where $\alpha$ is a generic constant related
to the constant $C$ in the $H^2$ regularity estimate
(\ref{Assumption:H2regularity}), $\|\cdot\|_X$ and  $\|\cdot\|_Y$ are the $H^2$ norm and the
$L^2$ norm, respectively.

\section{Discrete Weak Second Order Partial Derivative}\label{Section:Hessian} 
This section will briefly review the weak second order partial
derivative and its discrete version \cite{ww, wwnondiv}.

Let $T$ be a polygonal or polyhedral domain with boundary $\partial
T$. Denote by $v=\{v_0,v_b,\bv_g\}$ the weak function on the element $T$, where $v_0\in L^2(T)$ and $v_b\in
L^{2}(\partial T)$ are the values
of $v$ in the interior and on the boundary of $T$; and $\bv_g=(v_{g1},\ldots,v_{gd})\in [L^{2}(\partial T)]^d$ is the value of
 $\nabla v$ on the boundary of $T$. Note that
$v_b$ and $\bv_g$ may not necessarily be related to the traces of $v_0$
and $\nabla v_0$ on $\partial T$. It
is feasible to take $v_b$ as the trace of $v_0$ and leave
$\bv_g$ completely free or vice versa. 

Let $W(T)$ be the local space of the weak functions on $T$; i.e.,
\begin{equation}\label{2.1}
W(T)=\{v=\{v_0,v_b,\bv_g\}: v_0\in L^2(T), v_b\in L^{2}(\partial T),
\bv_g\in [L^{2}(\partial T)]^d\}.
\end{equation}
The weak second order partial derivative of the weak function $v\in W(T)$, denoted by $\partial^2_{ij,w} v$, is defined as a bounded linear functional on the Sobolev space $H^2(T)$ satisfying
\begin{equation}\label{2.3}
(\partial^2_{ij,w}v,\varphi)_T=(v_0,\partial^2_{ji}\varphi)_T-
 \langle v_b n_i,\partial_j\varphi\rangle_{\partial T}+
 \langle v_{gi},\varphi n_j\rangle_{\partial T},
 \end{equation}
 for any $\varphi\in H^2(T)$, where $\bn=(n_1,\cdots,n_d)$ is the unit outward normal direction on
$\partial T$.

Denote by $P_r(T)$ the space of polynomials with degree no more than $r\geq 0$ on $T$. A discrete version of $\partial^2_{ij,w} v$, denoted by $\partial^2_{ij,w,r,T} v$, is defined as the unique polynomial in $P_r(T)$ such that
\begin{equation}\label{2.4}
(\partial^2_{ij,w,r,T}v,\varphi)_T=(v_0,\partial^2
 _{ji}\varphi)_T-\langle v_b n_i,\partial_j\varphi\rangle_{\partial T}
 +\langle v_{gi},\varphi n_j\rangle_{\partial T},\quad \forall \varphi \in
 P_r(T).
\end{equation}
Applying the usual integration by parts to the first term on the
right-hand side of (\ref{2.4}) yields
\begin{equation}\label{2.4new}
(\partial^2_{ij,w,r,T}v, \varphi)_T=(\partial^2
 _{ij}v_0,\varphi)_T-\langle (v_b-v_0) n_i,\partial_j\varphi\rangle_{\partial T}
 +\langle v_{gi}-\partial_i v_0,\varphi n_j\rangle_{\partial T},
 \end{equation}
 for all $\varphi \in P_r(T)$, provided that $v_0\in H^2(T)$.

\section{Primal-Dual Weak Galerkin}\label{Section:PD-WGFEM}
Denote by ${\cal T}_h$ a finite element partition of the domain
$\Omega$ into polygons in 2D or polyhedra in 3D which is shape regular as described in \cite{wy3655}. Denote by
${\mathcal E}_h$ the set of all edges or flat faces in ${\cal T}_h$
and ${\mathcal E}_h^0={\mathcal E}_h \setminus \partial\Omega$ the
set of all interior edges or flat faces.  Denote by $h_T$ the diameter of the element $T\in {\cal T}_h$ and $h=\max_{T\in {\cal T}_h}h_T$ the meshsize of the partition ${\cal T}_h$. 

Let $k\geq 2$. Denote by $W_k(T)$ the local space of discrete weak functions; i.e., 
\begin{equation}\label{EQ:local-weak-fem-space}
W_k(T):=\{v=\{v_0,v_b,\bv_g\}\in P_k(T)\times P_k(e)\times
[P_{k-1}(e)]^d,\ e\in \partial T\cap\E_h\}.
\end{equation}
Patching $W_k(T)$ over all the elements $T\in {\cal T}_h$ through common value for $v_b$ on the interior interface $\E_h^0$  gives the weak finite element space; i.e.,
$$
W_{h,k}:=\big\{\{v_0,v_b, \textbf{v}_g\}:\ \{v_0,v_b, \bv_g\}|_T\in
W_k(T), \ T\in {\cal T}_h\big\}.
$$
Let $W_{h, k}^0$ be the subspace of $W_{h, k}$ with vanishing
boundary value for $v_b$ on $\partial\Omega$; i.e.,
\begin{equation*}\label{EQ:global-weak-fem-space}
W_{h,k}^0=\{\{v_0,v_b, \bv_g\}\in W_{h,k},\ v_b|_e=0, e\subset
\partial\Omega\}.
\end{equation*}

We further introduce the finite element space
\begin{equation*}\label{EQ:GlobalS}
V_{h, k}=\Big\{\sigma:\ \sigma|_T\in V_k(T),\ T\in {\cal T}_h\Big\},
\end{equation*}
where $V_k(T)$ is  chosen as either $P_{k-2}(T)$ or $P_{k-1}(T)$, as appropriate. The choice of $V_k(T)=P_{k-2}(T)$ has fewer degrees of freedom, while the choice $V_k(T)=P_{k-1}(T)$ results in more accurate  M-PDWG solution. 
 
For simplicity of notation, denote by $\partial^2_{ij, w}v$ the
discrete weak second order partial differential operator defined by
(\ref{2.4}) with $V_r(T)=V_k(T)$ on each element $T$; i.e.,
$$
(\partial^2_{ij, w} v)|_T=\partial^2_{ij,w,r,T}(v|_T), \qquad v\in
W_{h,k}.
$$

We introduce the bilinear forms
\begin{eqnarray}\label{EQ:global-b-form}
b_h(v, \sigma)&=&\sum_{T\in {\cal T}_h}b_T(v, \sigma),\quad v\in
W_{h,k}, \ \sigma\in V_{h,k},\\
s_h(u,v)&=&\sum_{T\in {\cal T}_h}s_T(u,v), \quad u,v\in
W_{h,k},\label{EQ:global-s-form}
\end{eqnarray}
where
\begin{eqnarray}\label{EQ:local-b-form}
b_T(v,\sigma)&=&\sum_{i,j=1}^d (a_{ij}\partial_{ij,w}^2
v,\sigma)_T,\\
s_T(u,v)&=&h_T^{-3}\langle u_0-u_b, v_0-v_b\rangle_\pT +
h_T^{-1}\langle \nabla u_0 -\textbf{u}_g, \nabla v_0
-\textbf{v}_g\rangle_\pT.\label{EQ:local-s-form}
\end{eqnarray}
We further introduce a symmetric and nonnegative continuous bilinear form
 $$
 c_h(\cdot, \cdot):V_{h,k}\times V_{h,k} \to  \mathbb R,
 $$
satisfying the continuity property; i.e., there exists a constant $C$ such that
\begin{equation}\label{norm}
 c_h(\lambda, \mu)\leq C\|\lambda\|_0 \| \mu\|_0,
 \end{equation}
 for any $\lambda, \mu\in V_{h, k}$, where $\|\cdot\|$ is the $L_2$ norm.
  
 \begin{algorithm}\emph{(M-PDWG Finite Element Method)}
\label{ALG:primal-dual-wg-fem} A  modified primal-dual weak Galerkin scheme for solving the second order elliptic problem (\ref{1}) in non-divergence form seeks $(u_h;\lambda_h)\in W_{h, k}^0 \times V_{h, k}$ satisfying
\begin{eqnarray}\label{2}
 s_h(u_h,v)+b_h(v, \lambda_h)&=&0,\qquad\qquad \forall v\in W_{h,k}^0,\\
 -c_h(\lambda_h, \sigma)+b_h(u_h, \sigma)&=&(f,\sigma),\qquad \forall\sigma\in V_{h,k}.\label{32}
\end{eqnarray}
Here $u_h$ is the primal variable and $\lambda_h$ is the dual variable or Lagrange multiplier.
\end{algorithm}

\section{Simplified M-PDWG Finite Element Methods} In order to greatly reduce the degrees of freedom and the computational complexity of the M-PDWG method \eqref{2}-\eqref{32}, we shall eliminate the dual variable $\lambda_h$ from the M-PDWG system resulting in a simplified system involving the primal variable $u_h$ only. 
   
Deonte by $\langle \cdot, \cdot \rangle$ the duality pairing between the  two spaces. For the bilinear forms $s_h(\cdot, \cdot)$, $b_h(\cdot, \cdot)$ and $c_h(\cdot, \cdot)$, we associate the operators $S\in {\cal L}(W_{h, k}^0; (W_{h, k}^0)')$, $B\in {\cal L}(W_{h,k}^0; V_{h, k}')$ and $C\in {\cal L}(V_{h, k}; V_{h, k}')$ defined by 
\begin{equation*}
\begin{split}
 \langle S u, v\rangle&=s_h(u, v), \qquad\forall u, v\in W_{h,k}^0, 
\\
 \langle B u, \mu\rangle&=b_h(u, \mu), \qquad  \forall u\in W_{h,k}^0, \mu\in V_{h,k},
 \\
 \langle C\lambda, \mu\rangle&=c_h(\lambda, \mu), \qquad \forall \lambda, \mu\in V_{h,k},
\end{split}
\end{equation*}
where we assume $c_h(\cdot, \cdot)$ is suitably constructed so that $C$ is invertible. As a specific example, for any $\rho, \sigma \in V_{h,k}$, we define 
\begin{equation}\label{ch}
c_h(\rho, \sigma)= 
 \sum_{T\in {\cal T}_h} h_T^2(\rho, \sigma)_T+h_T^3(\nabla\rho, \nabla\sigma)_T+ \sum_{i, j=1}^d h_T^4(\partial_{ij}^2 \rho, \partial_{ij}^2 \sigma)_T.       
\end{equation} 
Let $B' \in {\cal L}(V_{h, k}; (W_{h, k}^0)')$ be the dual operator of $B$; i.e.,
$$
  \langle B'\mu, u\rangle =\langle B u, \mu\rangle =b_h(u, \mu),\qquad \forall u\in W_{h,k}^0, \mu\in V_{h,k}.
$$
The M-PDWG scheme \eqref{2}-\eqref{32} can be equivalently rewritten as follows: Find $(u_h;\lambda_h)\in W_{h, k}^0 \times V_{h, k}$ satisfying 
 \begin{eqnarray}\label{eq1}
 Su_h+B' \lambda_h&=&0, \qquad \text{in} \quad (W_{h, k}^0)',\\
 -C \lambda_h +B u_h&=&f,  \qquad \text{in} \quad (V_{h, k})', \label{eq2}
\end{eqnarray}
where  $(W_{h, k}^0)'$ and $(V_{h, k})'$ are the dual spaces of $W_{h, k}^0$ and $V_{h, k}$, respectively.
Note that $C$ is invertible. Using \eqref{eq2}, we have
$$
\lambda_h=-C^{-1}(f-Bu_h),
$$
which, combined with (\ref{eq1}), leads to a simplified system as follows: Find $u_h\in W_{h, k}^0$, such that
\begin{equation}\label{sypli}
(S+B' C^{-1}B)u_h=B' C^{-1}f.
\end{equation}
 
Compared with the PDWG scheme for the second order elliptic problem in nondivergence form proposed in \cite{wwnondiv}, the  M-PDWG scheme is advantageous due to the facts that  (1) it could be reformulated into an equivalent simplified system (\ref{sypli}) involving the primal variable $u_h$ only; and (2) the condition number of \eqref{sypli} could be significantly reduced for a properly chosen $c(\cdot, \cdot)$ term. The main contributions of M-PDWG method can be generalized to PDWG methods for other model PDEs by adding an appropriately chosen $c(\cdot, \cdot)$ term.

\section{Stability and Solvability}\label{Section:ExistenceUniquess}
We shall demonstrate the existence and
uniqueness for the M-PDWG solution arising from Algorithm
\ref{ALG:primal-dual-wg-fem} through an inf-sup condition for the
bilinear form $b_h(\cdot, \cdot)$ . 

Let $k\geq 2$. On each element $T$, denote by $Q_0$ the $L^2$ projection onto $P_k(T)$. On each edge or face $e\subset\partial T$,
denote by $Q_b$ and $\textbf{Q}_g=(Q_{g1}, \ldots, Q_{gd})$
the $L^2$ projections onto $P_{k}(e)$ and $[P_{k-1}(e)]^d$,
respectively. For any function $w\in H^2(\Omega)$, denote by $Q_h w$ the $L^2$ projection onto the weak finite element space $W_{h,k}$ such
that on each element $T$, we have
\begin{equation}\label{EQ:OperatorQh}
Q_hw=\{Q_0w,Q_bw,\textbf{Q}_g(\nabla w)\}.
\end{equation}
Denote by ${\cal Q}_h$ the $L^2$ projection onto the space
$V_{h,k}$.

\begin{lemma}\label{Lemma5.1} \cite{ww} For any $w\in H^2(T)$, the commutative property holds true 
\begin{equation}\label{EQ:CommutativeP}
\partial^2_{ij,w}(Q_h w)={\cal Q}_h(\partial^2_{ij} w),\qquad
i,j=1,\ldots,d.
\end{equation}
\end{lemma}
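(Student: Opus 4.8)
The plan is to verify the identity (\ref{EQ:CommutativeP}) directly from the definition (\ref{2.4}) of the discrete weak operator together with the defining exactness properties of the $L^2$ projections $Q_0$, $Q_b$, $\textbf{Q}_g$ and ${\cal Q}_h$. Since both $\partial^2_{ij,w}(Q_h w)$ and ${\cal Q}_h(\partial^2_{ij}w)$ belong to $V_k(T)$ by construction, it suffices to show that they yield the same $L^2(T)$-inner product against every test polynomial $\varphi\in V_k(T)$; equality as elements of $V_k(T)$ then follows. The right-hand side is handled immediately by the definition of the projection, namely $({\cal Q}_h(\partial^2_{ij}w),\varphi)_T=(\partial^2_{ij}w,\varphi)_T$ for all $\varphi\in V_k(T)$.

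First I would write out the left-hand side using (\ref{2.4}) with $v=Q_h w$, so that $v_0=Q_0 w$, $v_b=Q_b w$ and $v_{gi}=Q_{gi}(\partial_i w)$, giving
\begin{equation*}
(\partial^2_{ij,w}(Q_hw),\varphi)_T=(Q_0w,\partial^2_{ji}\varphi)_T-\langle Q_bw\,n_i,\partial_j\varphi\rangle_{\partial T}+\langle Q_{gi}(\partial_i w),\varphi n_j\rangle_{\partial T}.
\end{equation*}
The next step is a degree count: since $V_k(T)\subseteq P_{k-1}(T)$ and $k\ge 2$, the test function satisfies $\partial^2_{ji}\varphi\in P_{k-3}(T)\subseteq P_k(T)$, while on each flat face $e$ the traces satisfy $n_i\partial_j\varphi\in P_{k-2}(e)\subseteq P_k(e)$ and $\varphi n_j\in P_{k-1}(e)$ (using that $\bn$ is constant on $e$). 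These containments are exactly what is needed so that the projection exactness properties apply, allowing me to replace $Q_0w$, $Q_bw$ and $Q_{gi}(\partial_i w)$ by $w$, $w$ and $\partial_i w$ respectively in the three terms above.

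After this replacement the left-hand side becomes $(w,\partial^2_{ji}\varphi)_T-\langle w\,n_i,\partial_j\varphi\rangle_{\partial T}+\langle \partial_i w,\varphi n_j\rangle_{\partial T}$, and I would then apply integration by parts twice to $(\partial^2_{ji}w,\varphi)_T$ (integrating once in the $j$-direction and once in the $i$-direction, using $\partial^2_{ij}\varphi=\partial^2_{ji}\varphi$ for the polynomial $\varphi$) to show this quantity equals $(\partial^2_{ji}w,\varphi)_T=(\partial^2_{ij}w,\varphi)_T$, where the last equality uses the symmetry of second derivatives for $w\in H^2(T)$. Matching this against the right-hand side closes the argument. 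The only point requiring genuine care — rather than routine calculation — is the degree bookkeeping in the second step, since the whole identity hinges on every derivative of $\varphi$ appearing in (\ref{2.4}) still lying in the range of the corresponding projection; everything else is the standard double integration by parts already embodied in (\ref{2.4new}).
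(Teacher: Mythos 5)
Your proof is correct. The paper itself gives no argument for this lemma---it is imported verbatim from \cite{ww}---but your derivation (test both sides against $\varphi\in V_k(T)$, use the exactness of $Q_0$, $Q_b$, $\mathbf{Q}_g$ justified by the degree count $\partial^2_{ji}\varphi\in P_{k-3}(T)\subseteq P_k(T)$, $n_i\partial_j\varphi|_e\in P_{k-2}(e)\subseteq P_k(e)$, $\varphi n_j|_e\in P_{k-1}(e)$, then integrate by parts twice so the boundary terms cancel, exactly as in \eqref{2.4new} with $v_b=v_0=w$ and $v_{gi}=\partial_i w$) is the standard argument used in the cited reference, and every step checks out. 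The only cosmetic slip is the phrase ``apply integration by parts twice to $(\partial^2_{ji}w,\varphi)_T$''; you mean to apply it to $(w,\partial^2_{ji}\varphi)_T$, moving the derivatives onto $w$, which is what the rest of your sentence actually does.
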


We introduce the semi-norm for the weak finite element space $W_{h, k}$; i.e., 
\begin{equation}\label{EQ:triple-bar}
\| v\|^2_{2,h}=\sum_{T\in {\cal T}_h}  \|\sum_{i,j=1}^d {\cal Q}_h
(a_{ij}\partial_{ij}^2 v_0)\|_{T}^2 + s_h(v,v), \qquad\forall v\in W_{h,k}.
\end{equation}

\begin{lemma}\label{Lemma:Sept:01} \cite{wwnondiv} Assume that the coefficient matrix  $a=(a_{ij})$ is uniformly piecewise continuous in $\Omega$ with respect to the
finite element partition $\T_h$. There exists a fixed $h_0>0$ such
that if $v=\{v_0,v_b,\bv_g\}\in W_{h,k}^0$ satisfies
$\|v\|_{2,h}=0$, then we have $v\equiv 0$ for $h\le h_0$.
\end{lemma}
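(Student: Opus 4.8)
The plan is to peel off the two nonnegative contributions to $\|v\|_{2,h}^2$ in \eqref{EQ:triple-bar} and then run a frozen-coefficient absorption argument driven by the Cord\`es condition. Since both terms are nonnegative, $\|v\|_{2,h}=0$ forces simultaneously $\sum_{i,j=1}^d{\cal Q}_h(a_{ij}\partial_{ij}^2 v_0)=0$ on every $T\in\T_h$ and $s_h(v,v)=0$. The latter, by the definition \eqref{EQ:local-s-form} of $s_T$, yields $v_0=v_b$ and $\nabla v_0=\bv_g$ on $\partial T$ for each element. Because the interface unknowns $v_b$ and $\bv_g$ are single-valued on $\E_h^0$ and $v_b$ vanishes on $\partial\Omega$ for $v\in W_{h,k}^0$, the traces of $v_0$ and of $\nabla v_0$ match across every interior face and $v_0$ vanishes on $\partial\Omega$; hence the piecewise polynomial $v_0$ actually lies in $H^2(\Omega)\cap H_0^1(\Omega)$.

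Next I would turn the first identity into a pointwise bound on a constant-coefficient operator. On each $T$ fix $x_T\in T$ and set $\bar a^T_{ij}:=a_{ij}(x_T)$; since $v_0|_T\in P_k(T)$ we have $\partial_{ij}^2 v_0\in P_{k-2}(T)\subseteq V_k(T)$, so ${\cal Q}_h$ fixes $\bar a^T_{ij}\partial_{ij}^2 v_0$. Splitting $a_{ij}=\bar a^T_{ij}+(a_{ij}-\bar a^T_{ij})$ and using $\sum_{i,j}{\cal Q}_h(a_{ij}\partial_{ij}^2 v_0)=0$ gives $\sum_{i,j}\bar a^T_{ij}\partial_{ij}^2 v_0=-\sum_{i,j}{\cal Q}_h((a_{ij}-\bar a^T_{ij})\partial_{ij}^2 v_0)$ on $T$. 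As ${\cal Q}_h$ is an $L^2$-contraction and $a$ is uniformly piecewise continuous, the right-hand side is bounded in $L^2(T)$ by $\delta(h)\|\nabla^2 v_0\|_T$, where $\delta(h)\to0$ as $h\to0$ is a uniform modulus of continuity of $a$. Summing the squares over $\T_h$ controls the piecewise frozen operator by $\delta(h)\|\nabla^2 v_0\|_{L^2(\Omega)}$.

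The crux is to reverse this into a lower bound on the full Hessian, and this is exactly where the Cord\`es condition \eqref{cordes} and convexity enter. On each $T$ the algebraic Cord\`es inequality gives $|\Delta v_0-\gamma_T\sum_{i,j}\bar a^T_{ij}\partial_{ij}^2 v_0|\le\sqrt{1-\varepsilon}\,|\nabla^2 v_0|$ pointwise, with $\gamma_T=\mathrm{tr}(\bar a^T)/\sum_{i,j}(\bar a^T_{ij})^2$ uniformly bounded by \eqref{matrix}. Combining this with the Miranda--Talenti inequality $\|\nabla^2 v_0\|_{L^2(\Omega)}\le\|\Delta v_0\|_{L^2(\Omega)}$ (valid on the convex domain for $v_0\in H^2\cap H_0^1$) and the bound from the previous step produces $\|\nabla^2 v_0\|_{L^2(\Omega)}\le C\delta(h)\|\nabla^2 v_0\|_{L^2(\Omega)}$, with $C$ depending only on $d$, $\varepsilon$, $C_1$, $C_2$. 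Fixing $h_0$ so that $C\delta(h)<1$ for $h\le h_0$ forces $\nabla^2 v_0\equiv0$, so $v_0$ is globally affine; since $v_0\in H_0^1(\Omega)$ this gives $v_0\equiv0$, whence $v_b=v_0|_{\partial T}=0$ and $\bv_g=\nabla v_0|_{\partial T}=0$, i.e. $v\equiv0$. I expect the main obstacle to be this last paragraph: producing an $h$-independent constant $C$ by simultaneously tracking the frozen-coefficient error $\delta(h)$, the convexity-based Miranda--Talenti estimate, and the Cord\`es parameter $\varepsilon$, so that the absorption closes uniformly for all small $h$.
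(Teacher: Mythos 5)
Your argument is sound, but note that this paper does not actually prove the lemma: it is quoted verbatim from \cite{wwnondiv}, so the only comparison available is against the standard argument in that reference, which your proof essentially reproduces. The skeleton is the expected one: $s_h(v,v)=0$ forces $v_0=v_b$ and $\nabla v_0=\bv_g$ on each $\partial T$, hence $v_0\in H^2(\Omega)\cap H_0^1(\Omega)$ (here you should say explicitly that \emph{both} $v_b$ and $\bv_g$ are single-valued on interior faces --- the definition of $W_{h,k}$ only mentions patching through $v_b$, but the paper itself uses the $C^1$ conclusion in the proof of Theorem \ref{thmunique1}, so this is the intended convention); and your frozen-coefficient identity $\sum_{i,j}\bar a^T_{ij}\partial_{ij}^2v_0=-\sum_{i,j}{\cal Q}_h\bigl((a_{ij}-\bar a^T_{ij})\partial_{ij}^2v_0\bigr)$ is correct because $\partial_{ij}^2v_0\in P_{k-2}(T)\subseteq V_k(T)$ and ${\cal Q}_h$ is an $L^2$ contraction. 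The one place where you take a longer route than necessary is the absorption: you unpack the stability of the frozen operator into the algebraic Cord\`es inequality \eqref{cordes} plus Miranda--Talenti, which imports convexity of $\Omega$ and the Cord\`es condition as explicit hypotheses even though neither appears in the lemma's statement. The cleaner route, consistent with the paper's standing assumption, is to apply the a priori estimate \eqref{Assumption:H2regularity} directly to $v_0$ with datum $\LL v_0$: since $\sum_{i,j}{\cal Q}_h(a_{ij}\partial_{ij}^2v_0)=0$, one has $\LL v_0=\sum_{i,j}(I-{\cal Q}_h)\bigl((a_{ij}-\bar a^T_{ij})\partial_{ij}^2v_0\bigr)$ on each $T$, whence $\|v_0\|_2\le C\|\LL v_0\|_0\le C\delta(h)\|v_0\|_2$ and the same absorption closes. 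Your anticipated obstacle --- obtaining an $h$-independent constant --- is not a real one: $\delta(h)$ is a uniform modulus of continuity of $a$ over the partition and all other constants depend only on $d$, $\varepsilon$, $C_1$, $C_2$, so fixing $h_0$ with $C\delta(h_0)<1$ finishes the proof exactly as you describe.
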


We further introduce another semi-norm for the weak finite element space $W_{h, k}$; i.e., for any $v\in W_{h,k}$, 
\begin{equation}\label{EQ:triple-bar-new}
\3bar v\3bar ^2_{2}=\sum_{T\in {\cal T}_h}  \|\sum_{i,j=1}^d {\cal
Q}_h (a_{ij}\partial_{ij,w}^2 v)\|_{T}^2 + s_h(v,v).
\end{equation}

The two semi-norms defined in (\ref{EQ:triple-bar}) and (\ref{EQ:triple-bar-new}) are equivalent, which is stated in the following lemma.

\begin{lemma}\label{Lemma:Sept:01-new}\cite{wwnondiv} Assume that the coefficient tensor $a=(a_{ij})$ is
uniformly piecewise continuous in $\Omega$ with respect to the
finite element partition $\T_h$. For any $v\in W_{h,k}$, there exist $\alpha_1>0$ and
$\alpha_2>0$ such that
\begin{equation*}\label{EQ:Sept:2015:001}
\alpha_1 \|v\|_{2,h} \leq \3bar v\3bar_{2} \leq \alpha_2 \|v\|_{2,h}.
\end{equation*}
 \end{lemma}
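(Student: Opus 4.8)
The plan is to show that the two semi-norms differ only through the replacement of the classical Hessian component $\partial^2_{ij}v_0$ by its discrete weak counterpart $\partial^2_{ij,w}v$, and that this difference is controlled entirely by the stabilizer $s_h(v,v)$, which both semi-norms already contain. The central estimate is the element-wise bound
$$
\|\partial^2_{ij,w}v - \partial^2_{ij}v_0\|_T \le C\, s_T(v,v)^{1/2}, \qquad i,j=1,\ldots,d,
$$
and everything else is assembly via the triangle inequality.

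First I would observe that since $v_0\in P_k(T)$, we have $\partial^2_{ij}v_0\in P_{k-2}(T)\subseteq V_k(T)$ for either admissible choice of $V_k(T)$, so that ${\mathcal Q}_h(\partial^2_{ij}v_0)=\partial^2_{ij}v_0$. Subtracting the projection identity $({\mathcal Q}_h(\partial^2_{ij}v_0),\varphi)_T=(\partial^2_{ij}v_0,\varphi)_T$ from the defining relation \eqref{2.4new} then gives, for every $\varphi\in V_k(T)$,
$$
(\partial^2_{ij,w}v - \partial^2_{ij}v_0,\ \varphi)_T = -\langle (v_b-v_0)n_i,\partial_j\varphi\rangle_{\pT} + \langle v_{gi}-\partial_i v_0,\ \varphi\, n_j\rangle_{\pT}.
$$
Next I would take $\varphi=\partial^2_{ij,w}v-\partial^2_{ij}v_0\in V_k(T)$ and bound the right-hand side by Cauchy--Schwarz, followed by the trace inequality $\|\varphi\|_{\pT}\le C h_T^{-1/2}\|\varphi\|_T$ and the inverse inequality $\|\nabla\varphi\|_T\le C h_T^{-1}\|\varphi\|_T$ valid for polynomials on shape-regular elements. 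After cancelling one factor of $\|\partial^2_{ij,w}v-\partial^2_{ij}v_0\|_T$, this yields
$$
\|\partial^2_{ij,w}v - \partial^2_{ij}v_0\|_T \le C\Big( h_T^{-3/2}\|v_0-v_b\|_{\pT} + h_T^{-1/2}\|\partial_i v_0 - v_{gi}\|_{\pT}\Big),
$$
whose right-hand side is exactly of the form $C\, s_T(v,v)^{1/2}$ by the definition \eqref{EQ:local-s-form} of the stabilizer.

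To pass to the full semi-norms, I would invoke the uniform bound $|a_{ij}|\le C_2$ from \eqref{matrix} together with the $L^2$-stability of the projection ${\mathcal Q}_h$ to get $\|{\mathcal Q}_h(a_{ij}(\partial^2_{ij,w}v-\partial^2_{ij}v_0))\|_T \le C_2\, s_T(v,v)^{1/2}$, and sum over the finitely many indices $i,j$. Applying the triangle inequality to $\sum_{i,j}{\mathcal Q}_h(a_{ij}\partial^2_{ij,w}v)$ and $\sum_{i,j}{\mathcal Q}_h(a_{ij}\partial^2_{ij}v_0)$, then squaring, summing over $T\in\T_h$, and absorbing the shared $s_h(v,v)$ term, produces both $\3bar v\3bar_2\le\alpha_2\|v\|_{2,h}$ and $\alpha_1\|v\|_{2,h}\le\3bar v\3bar_2$; the two directions follow from the same estimate since it is symmetric in the two Hessian representations.

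The step I expect to be the main, though still mild, obstacle is the handling of the variable coefficient $a_{ij}$ inside the projection ${\mathcal Q}_h$: because $a_{ij}$ is only bounded, the product $a_{ij}\partial^2_{ij,w}v$ need not lie in $V_k(T)$, so $a_{ij}$ cannot be pulled out of the projection. This is resolved at once by the $L^2$-contractivity of ${\mathcal Q}_h$, which reduces the whole estimate to the element-wise Hessian bound above and requires no commutativity property nor any regularity of $a_{ij}$ beyond the uniform boundedness already assumed in \eqref{matrix}.
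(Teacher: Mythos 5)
Your proof is correct and takes essentially the standard approach: the paper states this lemma without proof (citing \cite{wwnondiv}), and the argument there is exactly yours --- bound $\|\partial^2_{ij,w}v-\partial^2_{ij}v_0\|_T$ by $C\,s_T(v,v)^{1/2}$ using the identity \eqref{2.4new} with $\varphi=\partial^2_{ij,w}v-\partial^2_{ij}v_0\in V_k(T)$, the trace inequality \eqref{x} and an inverse inequality, then assemble via the $L^2$-contractivity of ${\cal Q}_h$, the boundedness of $a_{ij}$, and the triangle inequality; this is the same mechanism underlying Lemma \ref{lemma7.2}. One minor observation: your argument uses only the uniform bound \eqref{matrix}, so the piecewise-continuity hypothesis is not actually needed for this equivalence (it is needed for Lemmas \ref{Lemma:Sept:01} and \ref{lem3}).
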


\begin{lemma}\label{lem3}\cite{wwnondiv}
(inf-sup condition) Assume that the coefficient tensor $a=(a_{ij})$ is uniformly piecewise continuous in $\Omega$ with respect to the finite element partition $\T_h$. For
any $\sigma\in V_{h,k}$, there exists $v_\sigma\in W_{h,k}^0$
satisfying
\begin{eqnarray} \label{EQ:August30:500}
b_h(v_\sigma,\sigma) & \ge & \frac12 \|\sigma\|_{0}^2,\\
\| v_\sigma\|^2_{2,h} &\leq & C \|\sigma\|^2_{0},
\label{EQ:August30:501}
\end{eqnarray}
provided that the meshsize $h<h_0$ for a sufficiently small, but
fixed parameter $h_0>0$.
\end{lemma}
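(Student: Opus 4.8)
The plan is to construct $v_\sigma$ explicitly from an auxiliary continuous problem and then verify the two bounds separately. Given $\sigma\in V_{h,k}\subset L^2(\Omega)$, I would invoke the $H^2$-regularity of Theorem~\ref{THM:H2regularity} (applied with load $\sigma$) to obtain $w\in H^2(\Omega)\cap H_0^1(\Omega)$ solving $\LL w=\sigma$ in $\Omega$ together with the a priori bound $\|w\|_2\le C\|\sigma\|_0$. I then set $v_\sigma:=Q_h w\in W_{h,k}^0$; the vanishing boundary datum is automatic since $w\in H_0^1(\Omega)$ forces $v_b=Q_bw=0$ on $\partial\Omega$. The inequality (\ref{EQ:August30:500}) is obtained by showing $b_h(Q_hw,\sigma)$ is close to $\|\sigma\|_0^2$, and (\ref{EQ:August30:501}) by the $\|\cdot\|_{2,h}$-stability of the projection $Q_h$.

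For the lower bound I would first rewrite $b_h(Q_hw,\sigma)$ using the commutative property of Lemma~\ref{Lemma5.1}, $\partial^2_{ij,w}(Q_hw)={\cal Q}_h(\partial^2_{ij}w)$, to get
\[
b_h(Q_hw,\sigma)=\sum_{T\in\T_h}\sum_{i,j=1}^d\big(a_{ij}\,{\cal Q}_h(\partial^2_{ij}w),\sigma\big)_T .
\]
Since $\LL w=\sigma$ gives $(\LL w,\sigma)=\|\sigma\|_0^2$, adding and subtracting $(\LL w,\sigma)$ yields
\[
b_h(Q_hw,\sigma)=\|\sigma\|_0^2+\sum_{T\in\T_h}\sum_{i,j=1}^d\big(a_{ij}[{\cal Q}_h(\partial^2_{ij}w)-\partial^2_{ij}w],\sigma\big)_T ,
\]
so everything reduces to controlling this residual sum $R$.

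The estimate of $R$ is the main obstacle, because $a_{ij}\sigma\notin V_{h,k}$ in general, so the $L^2$-orthogonality of ${\cal Q}_h$ cannot be used directly and commutativity alone does not close the argument. Here I would exploit the uniform piecewise continuity hypothesis: on each $T$ write $a_{ij}=\bar a_{ij}+(a_{ij}-\bar a_{ij})$, where $\bar a_{ij}$ is the elementwise average of $a_{ij}$. Since $\bar a_{ij}\sigma|_T\in V_k(T)$, the orthogonality of ${\cal Q}_h$ annihilates the averaged part, leaving only
\[
R=\sum_{T\in\T_h}\sum_{i,j=1}^d\big((a_{ij}-\bar a_{ij})[{\cal Q}_h(\partial^2_{ij}w)-\partial^2_{ij}w],\sigma\big)_T .
\]
Letting $\rho(h):=\max_{T,i,j}\|a_{ij}-\bar a_{ij}\|_{L^\infty(T)}$, uniform piecewise continuity gives $\rho(h)\to0$ as $h\to0$; combining this with the $L^2$-contraction $\|{\cal Q}_h(\partial^2_{ij}w)-\partial^2_{ij}w\|_T\le\|\partial^2_{ij}w\|_T$ and Cauchy--Schwarz yields $|R|\le C\rho(h)\|w\|_2\|\sigma\|_0\le C\rho(h)\|\sigma\|_0^2$. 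Choosing $h_0$ so that $C\rho(h)\le\frac12$ for $h<h_0$ gives $b_h(Q_hw,\sigma)\ge\frac12\|\sigma\|_0^2$, which is (\ref{EQ:August30:500}); this is precisely where the smallness condition $h<h_0$ enters.

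Finally, for (\ref{EQ:August30:501}) I would bound the semi-norm of $v_\sigma=Q_hw$ using the definition (\ref{EQ:triple-bar}). For the Hessian term, the $L^\infty$-bound (\ref{matrix}) on $a$, the $L^2$-contraction of ${\cal Q}_h$, and the $H^2$-stability of $Q_0$ give $\sum_{T}\|\sum_{i,j}{\cal Q}_h(a_{ij}\partial^2_{ij}Q_0w)\|_T^2\le C\|w\|_2^2$. For the stabilizer $s_h(Q_hw,Q_hw)$, I would combine the trace inequality with the standard projection estimates $\|Q_0w-w\|_{\partial T}\lesssim h_T^{3/2}\|w\|_{2,T}$ and $\|\nabla Q_0w-\mathbf{Q}_g(\nabla w)\|_{\partial T}\lesssim h_T^{1/2}\|w\|_{2,T}$, so that the negative powers of $h_T$ in (\ref{EQ:local-s-form}) are absorbed and $s_h(Q_hw,Q_hw)\le C\|w\|_2^2$. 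Adding the two contributions and invoking $\|w\|_2\le C\|\sigma\|_0$ gives $\|v_\sigma\|_{2,h}^2\le C\|\sigma\|_0^2$, completing the plan.
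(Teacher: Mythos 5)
Your proposal is correct and follows essentially the same argument as the proof of this lemma in the cited reference \cite{wwnondiv}, to which the paper defers: solve the auxiliary problem $\LL w=\sigma$ using the assumed $H^2$ a priori bound, take $v_\sigma=Q_hw$, use the commutativity of Lemma~\ref{Lemma5.1} together with the splitting $a_{ij}=\bar a_{ij}+(a_{ij}-\bar a_{ij})$ and the $L^2$-orthogonality of ${\cal Q}_h$ to make the residual $O(\rho(h))\|\sigma\|_0^2$, and verify $\|Q_hw\|_{2,h}\le C\|w\|_2$ via the trace and projection estimates. No gaps.
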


\begin{theorem}\label{thmunique1} Assume that the coefficient matrix $a=(a_{ij})$
is uniformly piecewise smooth in $\Omega$ with respect to the
finite element partition $\T_h$. The M-PDWG finite element scheme (\ref{2})-(\ref{32}) has a unique solution
 $(u_h; \lambda_h)\in W_{h,k}^0 \times V_{h,k}$,
  provided that the
 meshsize $h<h_0$ holds true for a sufficiently small, but fixed
 parameter $h_0>0$. 
\end{theorem}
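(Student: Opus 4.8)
The plan is to exploit the square, finite-dimensional, linear structure of \eqref{2}-\eqref{32}: the unknown $(u_h;\lambda_h)$ lives in $W_{h,k}^0\times V_{h,k}$ and the test functions range over the same pair of spaces, so the number of equations equals the number of degrees of freedom. Consequently existence and uniqueness are equivalent, and it suffices to show that the homogeneous problem, i.e. $f=0$, admits only the trivial solution $(u_h;\lambda_h)=(0;0)$.

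First I would set $f=0$, take $v=u_h$ in \eqref{2} and $\sigma=\lambda_h$ in \eqref{32}, and subtract the two resulting scalar identities to obtain
\[
s_h(u_h,u_h)+c_h(\lambda_h,\lambda_h)=0.
\]
The form $s_h(\cdot,\cdot)$ is a nonnegatively weighted sum of boundary $L^2$ inner products (see \eqref{EQ:local-s-form}) and $c_h(\cdot,\cdot)$ is symmetric and nonnegative by construction, so each summand must vanish; in particular $s_h(u_h,u_h)=0$ and $c_h(\lambda_h,\lambda_h)=0$. Next I would pin down the Lagrange multiplier. Since $s_h(\cdot,\cdot)$ is a nonnegative symmetric bilinear form, the Cauchy--Schwarz inequality together with $s_h(u_h,u_h)=0$ forces $s_h(u_h,v)=0$ for every $v\in W_{h,k}^0$; inserting this into \eqref{2} yields $b_h(v,\lambda_h)=0$ for all $v\in W_{h,k}^0$. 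Applying the inf-sup condition of Lemma \ref{lem3} with $\sigma=\lambda_h$ produces $v_{\lambda_h}\in W_{h,k}^0$ with $b_h(v_{\lambda_h},\lambda_h)\ge \tfrac12\|\lambda_h\|_0^2$; since the left-hand side is zero, I conclude $\lambda_h=0$, provided $h<h_0$.

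Finally, with $\lambda_h=0$ and $f=0$, equation \eqref{32} collapses to $b_h(u_h,\sigma)=0$ for all $\sigma\in V_{h,k}$. The decisive step is to test with the particular choice $\sigma={\cal Q}_h\big(\sum_{i,j=1}^d a_{ij}\partial^2_{ij,w}u_h\big)\in V_{h,k}$. Writing $\phi=\sum_{i,j=1}^d a_{ij}\partial^2_{ij,w}u_h$ and using the defining orthogonality of the $L^2$ projection, $(\phi,{\cal Q}_h\phi)_T=\|{\cal Q}_h\phi\|_T^2$ on each element, I expect to obtain
\[
0=b_h(u_h,\sigma)=\sum_{T\in{\cal T}_h}\Big\|{\cal Q}_h\Big(\sum_{i,j=1}^d a_{ij}\partial^2_{ij,w}u_h\Big)\Big\|_T^2 ,
\]
so that the first term in the semi-norm \eqref{EQ:triple-bar-new} vanishes. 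Combined with $s_h(u_h,u_h)=0$ this gives $\3bar u_h\3bar_2=0$; the norm equivalence of Lemma \ref{Lemma:Sept:01-new} then yields $\|u_h\|_{2,h}=0$, and Lemma \ref{Lemma:Sept:01} finally forces $u_h=0$ for $h\le h_0$, completing the uniqueness and hence the solvability argument.

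The main obstacle I anticipate is precisely this last manipulation: one must move the projection ${\cal Q}_h$ correctly through the element-wise inner products so that the full first component of $\3bar\cdot\3bar_2$ is recovered, rather than only a projected fragment; commuting ${\cal Q}_h$ with the summation in $i,j$ (by linearity) and invoking the projection identity is what makes this exact. Everything else is a routine assembly of the nonnegativity of $s_h$ and $c_h$, the inf-sup estimate of Lemma \ref{lem3}, and the kernel and equivalence lemmas already established.
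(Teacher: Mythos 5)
Your argument is correct, and it coincides with the paper's proof up to and including the conclusion $\lambda_h=0$ (same energy identity $s_h(u_h,u_h)+c_h(\lambda_h,\lambda_h)=0$, same use of the inf-sup condition of Lemma \ref{lem3}). Where you diverge is the final step establishing $u_h=0$. You test \eqref{32} with $\sigma={\cal Q}_h\big(\sum_{i,j}a_{ij}\partial^2_{ij,w}u_h\big)$, use the projection identity $(\phi,{\cal Q}_h\phi)_T=\|{\cal Q}_h\phi\|_T^2$ to kill the first component of $\3bar u_h\3bar_2$, and then invoke the norm equivalence of Lemma \ref{Lemma:Sept:01-new} together with the kernel characterization of Lemma \ref{Lemma:Sept:01}. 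The paper instead uses \eqref{2.4new} and the facts $u_0=u_b$, $\nabla u_0=\bu_g$ on $\partial T$ (consequences of $s_h(u_h,u_h)=0$) to reduce $b_h(u_h,\sigma)=0$ to $\sum_T\sum_{i,j}(\partial^2_{ij}u_0,{\cal Q}_h(a_{ij}\sigma))_T=0$, then ``lets ${\cal Q}_h(a_{ij}\sigma)=\partial^2_{ij}u_0$'' to conclude $\partial^2_{ij}u_0=0$ elementwise, and finishes with $u_0\in C^1(\Omega)$, harmonicity, and the boundary condition. Your route is arguably cleaner: it reuses exactly the machinery (Lemmas \ref{Lemma:Sept:01} and \ref{Lemma:Sept:01-new}) that the paper has already set up, and it sidesteps the delicate point in the paper's version of having to realize $\partial^2_{ij}u_0$ as ${\cal Q}_h(a_{ij}\sigma)$ for all $i,j$ simultaneously with a single admissible $\sigma$. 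The only thing worth making explicit in your write-up is the linearity step ${\cal Q}_h\big(\sum_{i,j}a_{ij}\partial^2_{ij,w}u_h\big)=\sum_{i,j}{\cal Q}_h(a_{ij}\partial^2_{ij,w}u_h)$ so that the vanishing quantity is literally the first term of \eqref{EQ:triple-bar-new}; you already flag this, so the argument is complete.
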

\begin{proof} It suffices to show that the homogeneous problem of (\ref{2})-(\ref{32}) has only the trivial solution. To this end, assume $f=0$. By choosing $v=u_h$ and $\sigma=\lambda_h$ in (\ref{2})-(\ref{32})  we arrive at
$$
s_h(u_h, u_h)+ c_h(\lambda_h, \lambda_h)=0,
$$
which implies $s_h(u_h, u_h)=0$ and $c_h(\lambda_h, \lambda_h)=0$. From $s_h(u_h, u_h)=0$, we have 
$u_0=u_b$ and $\nabla u_0=\bu_g$ on each $\partial T$, which gives $u_h\in C^1(\Omega)$. Therefore, from (\ref{2}), we have 
$$
b_h(v, \lambda_h)=0, \qquad \forall v\in W_{h,k}^0.
$$
From Lemma \ref{lem3}, for $\lambda_h\in V_{h, k}$, there exists $v_{\lambda_h}\in W_{h,k}^0$
satisfying
$$0=b_h(v_{\lambda_h}, \lambda_h)   \geq   \frac12 \|\lambda_h\|_{0}^2, 
 $$
 which gives $\lambda_h=0$ on each $T\in {\cal T}_h$ and further  $\lambda_h \equiv 0$ in $\Omega$. Substituting $\lambda_h \equiv 0$ in $\Omega$ into (\ref{32}) yields
\begin{equation}\label{bterm}
\begin{split}
0&=b_h(u_h, \sigma)\\
&=\sum_{T\in {\cal T}_h}\sum_{i, j=1}^d(a_{ij}\partial^2_{ij, w} u_h, \sigma)_T\\
&=\sum_{T\in {\cal T}_h}\sum_{i, j=1}^d( \partial^2_{ij, w} u_h, {\cal Q}_h(a_{ij}\sigma))_T\\
&=\sum_{T\in {\cal T}_h}\sum_{i, j=1}^d   (\partial^2
 _{ij}u_0,{\cal Q}_h(a_{ij}\sigma))_T-\langle (u_b-u_0) n_i,\partial_j {\cal Q}_h(a_{ij}\sigma)\rangle_{\partial T}
 \\&\quad+\langle u_{gi}-\partial_i u_0, {\cal Q}_h(a_{ij}\sigma) n_j\rangle_{\partial T}\\
 &=\sum_{T\in {\cal T}_h}\sum_{i, j=1}^d   (\partial^2
 _{ij}u_0,{\cal Q}_h(a_{ij}\sigma))_T, 
\end{split}
\end{equation}
for any $\sigma\in V_{h, k}$, where we used (\ref{2.4new}) together with  $u_0=u_b$ and $\nabla u_0=\bu_g$ on each $\partial T$. Letting ${\cal Q}_h(a_{ij}\sigma)=\partial^2
 _{ij}u_0$ in (\ref{bterm}) gives $\partial^2
 _{ij}u_0=0$ for any $i, j=1, \cdots, d$ on each element $T\in {\cal T}_h$.  Note that $u_0 \in  C^1(\Omega)$. Thus, we have $\Delta u_0=0$ in $\Omega$. Since $u_h\in W_{h,k}^0$, we have $u_0=u_b=0$ on $\partial \Omega$.   Therefore, $u_0\equiv 0$ in $\Omega$ and further $u_h\equiv 0$ in $\Omega$.
 
This completes the proof of the theorem.
\end{proof} 

\section{Error Equations} 
Let $(u_h;\lambda_h) \in W_{h,k}^0\times V_{h,k}$ be the M-PDWG solution arising from the numerical scheme (\ref{2})-(\ref{32}). Note that the dual problem $b(v, \lambda)=0$ has a trivial solution $\lambda=0$ for any $v\in H^2(\Omega)\cap H_0^1(\Omega)$. The error functions are respectively defined as follows
\begin{equation*}\label{error}
e_h=u_h-Q_hu,\quad \gamma_h =\lambda_h-\Q_h\lambda=\lambda_h.
\end{equation*}
  
\begin{lemma}\label{errorequa}
The following error equations for the M-PDWG scheme (\ref{2})-(\ref{32}) hold true; i.e.,
\begin{eqnarray}\label{sehv}
s_h(e_h, v)+b_h(v, \gamma_h) & = &
  -s_h(Q_h u,v),\qquad \forall v\in W_{h,k}^0,\\
-c_h(\gamma_h, \sigma)+b_h(e_h,\sigma) & = & \ell_u(\sigma), \qquad\;\; \qquad \forall
\sigma\in V_{h,k},\label{sehv2}
\end{eqnarray}
where
\begin{equation}\label{EQ:ell-u}
\ell_u(\sigma) = \sum_{T\in {\cal T}_h} \sum_{i,j=1}^d((I-{\cal
Q}_h)\partial_{ij}^2u, a_{ij}\sigma)_T.
\end{equation}
\end{lemma}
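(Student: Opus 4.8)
The plan is to insert the decomposition $u_h = e_h + Q_h u$ and $\lambda_h = \gamma_h$ (recall that the dual problem has trivial solution $\lambda = 0$, so $\gamma_h = \lambda_h$) into the two equations (\ref{2})--(\ref{32}) of Algorithm \ref{ALG:primal-dual-wg-fem}, exploit bilinearity, and transfer the terms involving $Q_h u$ to the right-hand side.

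For (\ref{sehv}) this is immediate: substituting into (\ref{2}) and using linearity of $s_h(\cdot,\cdot)$ in its first argument gives $s_h(e_h, v) + s_h(Q_h u, v) + b_h(v, \gamma_h) = 0$, and moving $s_h(Q_h u, v)$ across yields (\ref{sehv}) with no further work. This step is routine.

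The substantive step is (\ref{sehv2}). Substituting the decomposition into (\ref{32}) produces $-c_h(\gamma_h, \sigma) + b_h(e_h, \sigma) = (f, \sigma) - b_h(Q_h u, \sigma)$, so it remains to identify the right-hand side with $\ell_u(\sigma)$. I would first rewrite $b_h(Q_h u, \sigma)$ using the commutative property of Lemma \ref{Lemma5.1}, namely $\partial^2_{ij,w}(Q_h u) = \Q_h(\partial^2_{ij} u)$ (valid since the regularity assumption (\ref{Assumption:H2regularity}) gives $u \in H^2(\Omega)$, hence $u|_T \in H^2(T)$ elementwise), to obtain
\[
b_h(Q_h u, \sigma) = \sum_{T\in {\cal T}_h}\sum_{i,j=1}^d (\Q_h(\partial^2_{ij} u),\, a_{ij}\sigma)_T,
\]
where I use that $a_{ij}$ enters as a scalar factor so the scalar $L^2$ inner product rearranges freely. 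Next I would invoke the strong form $\LL u = f$, i.e. $f = \sum_{i,j} a_{ij}\partial^2_{ij} u$ a.e. in $\Omega$, to write $(f,\sigma) = \sum_{T} \sum_{i,j} (\partial^2_{ij} u,\, a_{ij}\sigma)_T$. Subtracting the two identities term by term collapses the difference to $\sum_{T} \sum_{i,j} ((I - \Q_h)\partial^2_{ij} u,\, a_{ij}\sigma)_T = \ell_u(\sigma)$, as desired.

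The derivation is essentially algebraic once these two ingredients are assembled, so I do not expect a serious obstacle. The point demanding care is the legitimacy of the commutative property: it requires $u|_T \in H^2(T)$ and that the discrete weak Hessian be taken with respect to $V_k(T)$, both of which hold here, and this is precisely where the $H^2$-regularity hypothesis on the exact solution is used. I would also keep track of the fact that $\Q_h$ is the $L^2$ projection onto $V_{h,k}$ (not $W_{h,k}$), so that $\Q_h(\partial^2_{ij} u)\in V_{h,k}$ pairs correctly against $\sigma\in V_{h,k}$ and the term-by-term subtraction is well defined.
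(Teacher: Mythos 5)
Your proposal is correct and follows essentially the same route as the paper: the first equation by moving $s_h(Q_h u, v)$ across, and the second by combining the commutative property of Lemma \ref{Lemma5.1} with the strong form $\LL u = f$ to identify $(f,\sigma) - b_h(Q_h u,\sigma)$ with $\ell_u(\sigma)$. The signs and the pairing of $\Q_h(\partial^2_{ij}u)$ against $a_{ij}\sigma$ all check out, so nothing further is needed.
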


\begin{proof}
First, by subtracting $s_h(Q_h u,v)$ from both sides of (\ref{2}) we
obtain
\begin{align*}
s_h(u_h-Q_h u,v)+b_h(v, \lambda_h) = -s_h(Q_h u,v),\qquad\forall
v\in W_{h,k}^0,
\end{align*}
which implies 
\begin{equation*}\label{EQ:Error-EQ-01}
s_h(e_h,v)+b_h(v, \gamma_h) = -s_h(Q_h u,v),\qquad\forall v\in
W_{h,k}^0.
\end{equation*}
This completes the proof of the first error equation (\ref{sehv}).

To derive (\ref{sehv2}), we use (\ref{1}) and
  Lemma \ref{Lemma5.1} to obtain
\begin{equation*}
\begin{split}
b_h(Q_h u, \sigma) = &\sum_{T\in {\cal T}_h} (\sum_{i,j=1}^d
a_{ij}\partial_{ij,w}^2
Q_hu,\sigma)_T\\
=&\sum_{T\in {\cal T}_h} (\sum_{i,j=1}^d a_{ij}{\cal Q}_h\partial_{ij }^2
u,\sigma)_T\\
=&\sum_{T\in {\cal T}_h}(\sum_{i,j=1}^d a_{ij} \partial_{ij }^2 u,\sigma)_T+
\sum_{T\in {\cal T}_h} (\sum_{i,j=1}^da_{ij}({\cal Q}_h-I)\partial_{ij}^2 u, \sigma)_T\\
=& ( f,\sigma) +\sum_{T\in {\cal T}_h} \sum_{i,j=1}^d(({\cal
Q}_h-I)\partial_{ij}^2u, a_{ij}\sigma)_T,
\end{split}
\end{equation*}
for all $\sigma\in V_{h,k}$. Now subtracting the above equation from
(\ref{32}) yields the error equation (\ref{sehv2}). 

This completes the proof of the lemma.
\end{proof} 
 
\section{Error Estimates}\label{Section:ErrorEstimates}
Let $\T_h$ be a shape-regular finite element partition of
the domain $\Omega$. For any $T\in\T_h$, the
following trace inequality holds true \cite{wy3655}:
\begin{equation}\label{trace-inequality}
\|\varphi\|_{\pT}^2 \leq C
(h_T^{-1}\|\varphi\|_{T}^2+h_T\|\nabla\varphi\|_{T}^2),\qquad \forall \varphi\in H^1(T).
\end{equation}
Furthermore, assume $\varphi$ is a polynomial on the element $T\in \T_h$. Applying the inverse inequality  to (\ref{trace-inequality}) gives \cite{wy3655}
\begin{equation}\label{x}
\|\varphi\|_{\pT}^2 \leq C h_T^{-1}\|\varphi\|_{T}^2.
\end{equation}
 
\begin{lemma}\label{Lemma5.2}\cite{wy3655}  Assume that ${\cal T}_h$ is a shape regular finite element partition of the domain $\Omega$ as specified in \cite{wy3655}. For any $0\leq s\leq 2$
and $1\leq m\leq k$, there holds
\begin{eqnarray}\label{3.2}
\sum_{T\in {\cal T}_h}h_T^{2s}\|u-Q_0u\|^2_{s,T} &\leq &
Ch^{2(m+1)}\|u\|_{m+1}^2,\\
\label{3.3-3} \sum_{T\in {\cal T}_h}\sum_{i,j=1}^dh_T^{2s}\| u-{\cal
Q}_h u\|^2_{s,T} &\leq& Ch^{2(m-1)}\|u\|_{m-1}^2,\\
\label{3.3} \sum_{T\in {\cal
T}_h}\sum_{i,j=1}^dh_T^{2s}\|\partial^2_{ij}u-{\cal
Q}_h\partial^2_{ij}u\|^2_{s,T} &\leq& Ch^{2(m-1)}\|u\|_{m+1}^2.
\end{eqnarray}
\end{lemma}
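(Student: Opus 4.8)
The plan is to derive all three inequalities from a single local approximation estimate for the $L^2$ projection onto a polynomial space, applied element by element and then summed. Concretely, I would first establish that if $\Pi_p$ denotes the $L^2$ projection onto $P_p(T)$ and $0\le s\le t\le p+1$, then $\|w-\Pi_p w\|_{s,T}\le C h_T^{t-s}\|w\|_{t,T}$ for all $w\in H^t(T)$, with $C$ depending only on the shape-regularity parameters of $\cite{wy3655}$. The standard route is a Bramble--Hilbert (Deny--Lions) argument together with a scaling/homogeneity argument: since $\Pi_p$ reproduces $P_p$, the map $w\mapsto w-\Pi_p w$ annihilates $P_p$ and is therefore controlled by the $H^t$-seminorm on a reference configuration; passing to the physical element $T$ and tracking the scalings (which are admissible for the polytopal elements under the shape-regularity of $\cite{wy3655}$) converts reference seminorms into $h_T$-weighted $H^t(T)$-seminorms, producing the factor $h_T^{t-s}$.

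Given this local estimate, $\eqref{3.2}$ follows by taking $\Pi_p=Q_0$ (so $p=k$), $t=m+1$, and $w=u$; since $m\ge 1$ we have $s\le 2\le m+1=t$, so the estimate applies and squaring gives $h_T^{2s}\|u-Q_0u\|_{s,T}^2\le C h_T^{2s}h_T^{2(m+1-s)}\|u\|_{m+1,T}^2= C h_T^{2(m+1)}\|u\|_{m+1,T}^2$. The key point is the cancellation of the $\pm 2s$ powers, leaving the clean weight $h_T^{2(m+1)}$; summing over $T\in{\cal T}_h$ and using $h_T\le h$ with $\sum_T\|u\|_{m+1,T}^2=\|u\|_{m+1}^2$ yields $\eqref{3.2}$. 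Estimate $\eqref{3.3-3}$ is obtained identically with $\Pi_p={\cal Q}_h$ (so $p=k-2$ or $k-1$, whence $t=m-1\le k-1$ is admissible because $m\le k$), $t=m-1$, $w=u$, giving the weight $h_T^{2(m-1)}$ after the same cancellation; the sum $\sum_{i,j=1}^d$ merely contributes a constant factor $d^2$.

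Finally, I would deduce $\eqref{3.3}$ from $\eqref{3.3-3}$ rather than re-prove it: applying the local estimate with $w=\partial^2_{ij}u$ and $t=m-1$ gives $h_T^{2s}\|\partial^2_{ij}u-{\cal Q}_h\partial^2_{ij}u\|_{s,T}^2\le C h_T^{2(m-1)}\|\partial^2_{ij}u\|_{m-1,T}^2$, and summing over $i,j$ and $T$ together with the elementary bound $\sum_{i,j=1}^d\|\partial^2_{ij}u\|_{m-1}^2\le C\|u\|_{m+1}^2$ gives the result. The main obstacle is the bookkeeping of admissible Sobolev indices: the local estimate requires $s\le t$, which for $\eqref{3.3-3}$--$\eqref{3.3}$ means $s\le m-1$, whereas the stated range is $0\le s\le 2$. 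I would therefore need to handle the regime $s>m-1$ separately --- either by restricting to the $(s,m)$ combinations actually invoked in the later error analysis, or by using an inverse inequality to absorb the surplus derivatives of the polynomial ${\cal Q}_h w$ --- and to verify that every constant produced by the reference-configuration argument and the scalings depends only on the shape-regularity of ${\cal T}_h$ and not on $h_T$.
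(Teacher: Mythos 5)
The paper does not prove this lemma at all; it is quoted verbatim from \cite{wy3655}, so there is no in-paper argument to compare against. Your Bramble--Hilbert plus scaling argument is the standard route by which such projection estimates are established in the weak Galerkin literature, and it is essentially the argument underlying the cited reference: the only substantive content is the local estimate $\|w-\Pi_p w\|_{s,T}\le Ch_T^{t-s}\|w\|_{t,T}$ for $s\le t\le p+1$, after which the $h_T^{\pm 2s}$ powers cancel and the sums over $T$ and over $i,j$ are immediate, exactly as you describe. Your bookkeeping for \eqref{3.2} ($t=m+1\ge 2\ge s$) and for the admissibility of $t=m-1\le k-1$ when $V_k(T)=P_{k-2}(T)$ or $P_{k-1}(T)$ is correct.

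The caveat you raise at the end is a genuine one and worth stating explicitly: as written, \eqref{3.3-3} and \eqref{3.3} for $s>m-1$ cannot follow from the approximation estimate alone (and for \eqref{3.3-3} with $m=1$ the right-hand side $\|u\|_{0}$ cannot control $\|u-{\cal Q}_hu\|_{2,T}$ without additional regularity of $u$ entering somewhere), so the stated range $0\le s\le 2$, $1\le m\le k$ is looser than what the proof delivers. This is harmless for the paper, which only invokes \eqref{3.2} with $s=0,1$ and $m=k$, and \eqref{3.3} with $s=0$ and $m=k\ge 2$ --- all within the admissible regime $s\le t$ --- so your proposed resolution of restricting to the index combinations actually used is the right one. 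With that restriction made explicit, your proof is complete and correct.
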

%\begin{proof} This proof can be found in  \cite{wwnondiv} and the details are thus omitted here.
% \end{proof}

We are ready to present the critical error estimates for the M-PDWG scheme (\ref{2})-(\ref{32}), which is the main contribution of this paper.  
\begin{theorem} \label{theoestimate}  Assume that the coefficient tensor $a=(a_{ij})$ is uniformly piecewise continuous in $\Omega$ with respect to the
finite element partition $\T_h$. Let $u$ be the exact solution of (\ref{1})  and $(u_h;\lambda_h) \in
W_{h,k}^0\times V_{h,k}$ be the M-PDWG solution of  (\ref{2})-(\ref{32}), respectively. Assume that the exact solution
$u$ of (\ref{1}) is sufficiently regular such that $u\in
H^{k+1}(\Omega)$. There exists a constant $C$ such that
 \begin{equation}\label{erres}
\| u_h-Q_h u \|_{2, h}+\|\lambda_h-{\cal Q}_h \lambda\|_0 \leq
Ch^{k-1}\|u\|_{k+1},
\end{equation}
provided that the meshsize $h<h_0$ holds true for a sufficiently
small, but fixed $h_0>0$.
\end{theorem}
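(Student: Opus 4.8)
The plan is to fold the two error equations of Lemma~\ref{errorequa} into a single energy identity, bound the two data functionals on its right-hand side by the approximation estimates, and then invoke the inf-sup condition of Lemma~\ref{lem3} to decouple and control the dual error. First I would test (\ref{sehv}) with $v=e_h$ and (\ref{sehv2}) with $\sigma=\gamma_h$ and subtract, so that the symmetric cross term $b_h(e_h,\gamma_h)$ cancels and one is left with
\begin{equation*}
s_h(e_h,e_h)+c_h(\gamma_h,\gamma_h) = -s_h(Q_h u,e_h)-\ell_u(\gamma_h).
\end{equation*}
Since $c_h(\cdot,\cdot)$ is nonnegative, the left-hand side dominates $s_h(e_h,e_h)$, and the two terms on the right are pure approximation errors. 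For the first, a Cauchy--Schwarz inequality in the $s_h$ semi-inner product reduces matters to bounding $s_h(Q_h u,Q_h u)$; writing $Q_0u-Q_bu$ and $\nabla Q_0u-\mathbf{Q}_g(\nabla u)$ as projection errors, applying the trace inequality (\ref{trace-inequality}) and the approximation bounds (\ref{3.2}) of Lemma~\ref{Lemma5.2} with $m=k$, I expect $s_h(Q_h u,Q_h u)^{1/2}\le Ch^{k-1}\|u\|_{k+1}$. For the second, the boundedness of $a_{ij}$ and the estimate (\ref{3.3}) should give $|\ell_u(\sigma)|\le Ch^{k-1}\|u\|_{k+1}\|\sigma\|_0$ for every $\sigma\in V_{h,k}$.

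Next I would recover control of $\|\gamma_h\|_0$. Given $\gamma_h\in V_{h,k}$, Lemma~\ref{lem3} supplies $v_{\gamma_h}\in W_{h,k}^0$ with $b_h(v_{\gamma_h},\gamma_h)\ge\frac12\|\gamma_h\|_0^2$ and $\|v_{\gamma_h}\|_{2,h}^2\le C\|\gamma_h\|_0^2$. Testing (\ref{sehv}) with $v=v_{\gamma_h}$ and rearranging turns $b_h(v_{\gamma_h},\gamma_h)$ into $-s_h(e_h,v_{\gamma_h})-s_h(Q_h u,v_{\gamma_h})$; bounding each term by Cauchy--Schwarz and using $s_h(v_{\gamma_h},v_{\gamma_h})^{1/2}\le\|v_{\gamma_h}\|_{2,h}\le C\|\gamma_h\|_0$ yields $\|\gamma_h\|_0\le C\big(s_h(e_h,e_h)^{1/2}+h^{k-1}\|u\|_{k+1}\big)$. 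Substituting this, together with the two data bounds, into the energy identity and applying Young's inequality to absorb the $s_h(e_h,e_h)^{1/2}$ factors gives $s_h(e_h,e_h)\le Ch^{2(k-1)}\|u\|_{k+1}^2$, and hence $\|\gamma_h\|_0\le Ch^{k-1}\|u\|_{k+1}$.

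It then remains to upgrade the stabilization bound to the full norm $\|e_h\|_{2,h}$, i.e.\ to control the Hessian contribution. Here I would work with the equivalent norm $\3bar\cdot\3bar_2$ of (\ref{EQ:triple-bar-new}) and take the special test function $\sigma=\chi\in V_{h,k}$ defined elementwise by $\chi|_T=\sum_{i,j=1}^d\Q_h(a_{ij}\partial^2_{ij,w}e_h)$, which is legitimate since $V_{h,k}$ is a broken space. Because each $a_{ij}\partial^2_{ij,w}e_h$ pairs against $\sigma\in V_{h,k}$ only through its $L^2$ projection, one obtains $b_h(e_h,\chi)=\sum_{T}\|\chi|_T\|_T^2=\|\chi\|_0^2$; then (\ref{sehv2}) gives $\|\chi\|_0^2=c_h(\gamma_h,\chi)+\ell_u(\chi)$, and the continuity (\ref{norm}) of $c_h$ together with the already-established bounds on $\|\gamma_h\|_0$ and $\ell_u$ yields $\|\chi\|_0\le Ch^{k-1}\|u\|_{k+1}$. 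Combining with the $s_h(e_h,e_h)$ bound gives $\3bar e_h\3bar_2\le Ch^{k-1}\|u\|_{k+1}$, and the norm equivalence of Lemma~\ref{Lemma:Sept:01-new} converts this into the asserted estimate for $\|e_h\|_{2,h}$. The main obstacle is the coupling between the primal and dual errors: the energy identity alone controls only the stabilization $s_h(e_h,e_h)$ while leaving $\gamma_h$ on the right-hand side, so the crux is the correct use of the inf-sup condition to re-express $\|\gamma_h\|_0$ in terms of $s_h(e_h,e_h)^{1/2}$ and close the loop before the Hessian part can be recovered.
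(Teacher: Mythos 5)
Your proposal is correct and follows essentially the same route as the paper's proof: the same energy identity from testing with $(e_h;\gamma_h)$, the same inf-sup argument to bound $\|\gamma_h\|_0$ by $s_h(e_h,e_h)^{1/2}+Ch^{k-1}\|u\|_{k+1}$, the same choice $\sigma=\sum_{i,j}\Q_h(a_{ij}\partial^2_{ij,w}e_h)$ to recover the Hessian part, and the same closing via Young's inequality and Lemma \ref{Lemma:Sept:01-new}. The only difference is the order in which the steps are assembled, which is immaterial.
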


\begin{proof}  
From \eqref{sehv}, we have
\begin{equation}\label{bt}
b_h(v, \gamma_h)=-s_h(Q_h u, v)-s_h(e_h, v).
\end{equation}
 
Recall that
\begin{equation}\label{EQ:September:03:100}
\begin{split}
s_h(Q_h u, v) = &\sum_{T\in {\cal T}_h}h_T^{-3}\langle Q_0u-Q_bu,
v_0-v_b\rangle_\pT\\
& +\sum_{T\in {\cal T}_h}h_T^{-1} \langle \nabla Q_0u
-\textbf{Q}_g(\nabla u), \nabla v_0 -\textbf{v}_g\rangle_\pT.
\end{split}
\end{equation}
The first term on the right-hand side of (\ref{EQ:September:03:100})
can be estimated by using the Cauchy-Schwarz inequality, the trace
inequality (\ref{trace-inequality}), and the estimate (\ref{3.2})
with $m=k$ as follows
\begin{equation}\label{s1}
\begin{split}
&\left| \sum_{T\in {\cal T}_h}h_T^{-3}\langle Q_0u-Q_bu, v_0-v_b\rangle_\pT\right|\\
= &\left| \sum_{T\in {\cal T}_h}h_T^{-3}\langle Q_0u- u, v_0-v_b\rangle_\pT\right|\\
 \leq & \Big(\sum_{T\in {\cal T}_h}h_T^{-3}\|u-Q_0u\|^2_{\partial
T}\Big)^{\frac{1}{2}} \Big(\sum_{T\in {\cal T}_h}
h_T^{-3}\|v_0-v_b\|^2_{\partial T}\Big)^{\frac{1}{2}}\\
\leq & C\Big(\sum_{T\in {\cal
T}_h}h_T^{-4}\big(\|u-Q_0u\|_T^2+h_T^2\|u-Q_0u\|_{1,T}^2\big)
\Big)^{\frac{1}{2}}(s_h(v, v))^{\frac{1}{2}}\\
\leq & Ch^{k-1}\|u\|_{k+1}(s_h(v, v))^{\frac{1}{2}}.
\end{split}
\end{equation}
Similarly, the second term on the right-hand side of
(\ref{EQ:September:03:100}) has the following estimate
\begin{equation}\label{s2}
\left|\sum_{T\in {\cal T}_h}h_T^{-1}\langle \nabla Q_0u
-\textbf{Q}_g(\nabla u ), \nabla v_0 -\textbf{v}_g \rangle_\pT
\right|\leq Ch^{k-1}\|u\|_{k+1} (s_h(v, v))^{\frac{1}{2}}.
\end{equation}
Combining (\ref{EQ:September:03:100}) - (\ref{s2})
gives 
\begin{equation}\label{EQ:True:01}
|s_h(Q_h u, v)| \leq C h^{k-1} \|u\|_{k+1} (s_h(v, v))^{\frac{1}{2}}.
\end{equation}

Using Cauchy-Schwarz inequality, it is easy to obtain 
\begin{equation}\label{EQ:True:01-1}
|s_h(e_h, v)| \leq  \big(s_h(e_h, e_h)\big)^{\frac{1}{2}}\big(s_h(v, v)\big)^{\frac{1}{2}}.
\end{equation}

Substituting (\ref{EQ:True:01})-(\ref{EQ:True:01-1}) into \eqref{bt} gives
$$
|b_h(v, \gamma_h) |\leq   (Ch^{k-1}\|u\|_{k+1}+(s_h(e_h, e_h))^{\frac{1}{2}}) (s_h(v, v))^{\frac{1}{2}},$$
which from Lemma \ref{lem3}, for $\gamma_h\in V_{h, k}$, there exists $v_{\gamma_h}\in W_{h, k}^0$ such that  
\begin{equation*}
\begin{split}
\frac{1}{2}\| \gamma_h \|_{0}^2\leq &|b_h(v_{\gamma_h}, \gamma_h) |\\\leq&   (Ch^{k-1}\|u\|_{k+1}+(s_h(e_h, e_h))^{\frac{1}{2}})\|v_{\gamma_h}\|_{2,h}\\\leq & (Ch^{k-1}\|u\|_{k+1}+(s_h(e_h, e_h))^{\frac{1}{2}})\| \gamma_h \|_{0}.
\end{split}
\end{equation*}
Therefore, we have
\begin{equation}\label{gama}
\| \gamma_h \|_{0}\leq  Ch^{k-1}\|u\|_{k+1}+(s_h(e_h, e_h))^{\frac{1}{2}}.
\end{equation}

From (\ref{sehv2}), we have
\begin{equation}\label{bht}
b_h(e_h, \sigma)=\ell_u(\sigma)+c_h(\gamma_h, \sigma).
\end{equation}

Using (\ref{EQ:ell-u}) and the estimate (\ref{3.3})  with $m=k$ we have
\begin{equation}\label{aij}
\begin{split}
|\ell_u(\sigma)| & = \left|\sum_{T\in\T_h} \sum_{i,j=1}^d (I-{\cal
Q}_h)\partial_{ij}^2u, a_{ij}\sigma)_T\right|\\
& \leq \sum_{i,j=1}^d \|a_{ij}\|_{L^\infty}\ \|(I-{\cal
Q}_h)\partial_{ij}^2u\|_0 \ \|\sigma\|_0 \\
&\leq C h^{k-1}\|u\|_{k+1} \|\sigma\|_0.
\end{split}
\end{equation}

Substituting (\ref{aij}) into (\ref{bht}), we have
\begin{equation*}
|b_h(e_h, \sigma)| \leq  C(h^{k-1}\|u\|_{k+1} +\|\gamma_h\|_0)\|\sigma\|_0, 
\end{equation*}
where we used (\ref{norm}). Taking $\sigma={\cal Q}_h (a_{ij}\partial_{ij, w}^2 e_h)$ in the above equation gives 
\begin{equation}\label{t1}
\Big(\sum_{T\in {\cal T}_h}\|\sum_{i, j=1}^d {\cal Q}_h(a_{ij} \partial_{ij, w}^2 e_h) \|_T^2\Big)^{\frac{1}{2}}\leq  C (h^{k-1}\|u\|_{k+1} +\|\gamma_h\|_0).
\end{equation} 

Letting $v=e_h$ in \eqref{sehv} and $\sigma=\gamma_h$ in \eqref{sehv2} gives
 \begin{equation}\label{ess2}
 s_h(e_h, e_h)+c_h(\gamma_h, \gamma_h)=-s_h(Q_hu, e_h)-\ell_u(\gamma_h).
 \end{equation}
Substituting  \eqref{EQ:True:01}, \eqref{gama} and \eqref{aij} into \eqref{ess2} yields
\begin{equation}\label{t2}
\begin{split}
&s_h(e_h, e_h)+c_h(\gamma_h, \gamma_h)\\
\leq &C h^{k-1}\|u\|_{k+1} ((s_h(e_h, e_h))^{\frac{1}{2}}+\|\gamma_h\|_0)
\\
 \leq &C h^{k-1}\|u\|_{k+1} ((s_h(e_h, e_h))^{\frac{1}{2}}+Ch^{k-1}\|u\|_{k+1}) \\
 \leq &C h^{2k-2}\|u\|^2_{k+1}+ C \frac{1}{\epsilon} h^{2k-2}\|u\|^2_{k+1}+ C\epsilon s_h(e_h, e_h) 
\end{split}
\end{equation}
where we used Young's inequality with $\epsilon$ being sufficiently small such that $1-C\epsilon >0$,
which gives 
\begin{equation*}\label{t3}
(1-C\epsilon)s_h(e_h, e_h) +c_h(\gamma_h, \gamma_h) \leq Ch^{2k-2}\|u\|_{k+1}^2,
\end{equation*}
which gives 
\begin{equation}\label{t4}
s_h(e_h, e_h) \leq Ch^{2k-2}\|u\|_{k+1}^2,
\end{equation}
where we used $c_h(\gamma_h, \gamma_h)$ is non-negative. Using \eqref{t4},  \eqref{gama} gives
\begin{equation}\label{t5}
 \|\gamma_h\|_0\leq Ch^{k-1}\|u\|_{k+1},
\end{equation}
 which, from (\ref{t1}) and (\ref{t4}), gives
\begin{equation}\label{t6}
 \3bar e_h\3bar_{2}\leq Ch^{k-1}\|u\|_{k+1}.
\end{equation}
Combining \eqref{t5} and \eqref{t6} and using Lemma \ref{Lemma:Sept:01-new} completes the proof of the theorem.

\end{proof} 

\section{Error Estimates in $H^1$ and
$L^2$}\label{Section:H1L2Error}
In this section, we shall establish the error estimates in $H^1$ and
$L^2$ norm for the M-PDWG solution arising from the scheme (\ref{2})-(\ref{32}).

\begin{lemma}\label{lemma7.2} \cite{wwnondiv} There exists a constant $C$ such that
for any $v\in W_{k}(T)$, we have
\begin{equation}\label{qaij}
\|\partial^2_{ij, w} v\|_T^2 \leq C\left(\|\partial_{ij}^2 v_0\|_T^2
+ s_T(v,v)\right).
\end{equation}
\end{lemma}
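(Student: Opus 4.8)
The plan is to test the defining relation for the discrete weak Hessian against the polynomial $\partial^2_{ij,w}v$ itself and then absorb the resulting boundary terms into the stabilizer $s_T(\cdot,\cdot)$ by careful bookkeeping of the powers of $h_T$. Since $\partial^2_{ij,w}v$ belongs to the polynomial space $V_k(T)$, it is an admissible test function in the integration-by-parts form (\ref{2.4new}). I would therefore take $\varphi=\partial^2_{ij,w}v$ there to obtain
\begin{equation*}
\|\partial^2_{ij,w}v\|_T^2=(\partial^2_{ij}v_0,\partial^2_{ij,w}v)_T-\langle(v_b-v_0)n_i,\partial_j\partial^2_{ij,w}v\rangle_\pT+\langle v_{gi}-\partial_i v_0,\partial^2_{ij,w}v\,n_j\rangle_\pT,
\end{equation*}
and the goal becomes to bound the right-hand side by $\big(\|\partial^2_{ij}v_0\|_T+C(s_T(v,v))^{1/2}\big)\|\partial^2_{ij,w}v\|_T$.

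The first term is handled immediately by the Cauchy--Schwarz inequality, producing the factor $\|\partial^2_{ij}v_0\|_T$. For the two boundary terms I would apply Cauchy--Schwarz on $\pT$ and then exploit the fact that $\partial^2_{ij,w}v$, and hence $\partial_j\partial^2_{ij,w}v$, is a polynomial, so that the inverse-trace inequality (\ref{x}) together with the standard inverse inequality applies, giving $\|\partial_j\partial^2_{ij,w}v\|_\pT\le Ch_T^{-3/2}\|\partial^2_{ij,w}v\|_T$ and $\|\partial^2_{ij,w}v\|_\pT\le Ch_T^{-1/2}\|\partial^2_{ij,w}v\|_T$. On the other factor of each pairing, $\|v_0-v_b\|_\pT$ and $\|\nabla v_0-\bv_g\|_\pT$ are exactly the quantities appearing in $s_T(v,v)$: by its definition (\ref{EQ:local-s-form}) one has $\|v_0-v_b\|_\pT\le h_T^{3/2}(s_T(v,v))^{1/2}$ and $\|\nabla v_0-\bv_g\|_\pT\le h_T^{1/2}(s_T(v,v))^{1/2}$. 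The negative powers of $h_T$ from the polynomial factors cancel the positive powers coming out of $s_T$, so each boundary term is bounded by $C(s_T(v,v))^{1/2}\|\partial^2_{ij,w}v\|_T$.

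Combining the three estimates yields
\begin{equation*}
\|\partial^2_{ij,w}v\|_T^2\le\big(\|\partial^2_{ij}v_0\|_T+C(s_T(v,v))^{1/2}\big)\|\partial^2_{ij,w}v\|_T,
\end{equation*}
after which dividing by $\|\partial^2_{ij,w}v\|_T$ (the inequality being trivial when it vanishes), squaring, and using $(a+b)^2\le 2a^2+2b^2$ delivers (\ref{qaij}). The only delicate point is the scaling bookkeeping in the middle step: the whole argument hinges on the $h_T$-weights in $s_T$ being matched so that $h_T^{-3}$ pairs with $\|\partial_j\partial^2_{ij,w}v\|_\pT^2$ and $h_T^{-1}$ pairs with $\|\partial^2_{ij,w}v\|_\pT^2$ once the inverse estimates are used, which is precisely how the stabilizer is designed. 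Everything else is routine.
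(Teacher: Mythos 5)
Your proposal is correct and is essentially the standard argument for this lemma (which the paper itself does not reprove but imports from \cite{wwnondiv}): test (\ref{2.4new}) with $\varphi=\partial^2_{ij,w}v\in V_k(T)$, use Cauchy--Schwarz on the volume term, and control the two boundary terms by pairing the trace/inverse inequalities for polynomials with the $h_T^{-3}$ and $h_T^{-1}$ weights built into $s_T(\cdot,\cdot)$. The scaling bookkeeping you describe is exactly right, and the final step of dividing by $\|\partial^2_{ij,w}v\|_T$ and squaring is routine.
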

% \begin{proof} This proof can be found in  \cite{wwnondiv} and the details are thus omitted here.
% \end{proof}
%

Consider an auxiliary problem: Find $w$ satisfying
\begin{align}\label{dual1}
\sum_{i,j=1}^d \partial_{ji}^2 (a_{ij} w)=& \ \theta,\qquad \text{in}\ \Omega,\\
w=& \ 0,\qquad \text{on}\ \partial\Omega, \label{dual2}
\end{align}
where $\theta$ is a given function.
The variational formulation for (\ref{dual1})-(\ref{dual2}) seeks $w\in
L^2(\Omega)$ such that \begin{equation}\label{Dual-Variational}
b(v, w) = (\theta, v), \qquad \forall v\in H^2(\Omega)\cap
H_0^1(\Omega),
\end{equation}
where the bilinear form $b(\cdot,\cdot)$ is given by (\ref{b-form}).

The problem (\ref{dual1})-(\ref{dual2}) is assumed to be $H^{1+s}$-regular ($s\in [0,1]$) in the sense that for any $\theta\in H^{s-1}(\Omega)$, there exists a unique $w\in H^{1+s}(\Omega)\cap H_0^1(\Omega)$ satisfying (\ref{Dual-Variational}) and a priori estimate:
\begin{equation}\label{regul}
\|w\|_{1+s}\leq C\|\theta\|_{s-1}.
\end{equation}

\begin{lemma}\label{Lemma:TechnicalEquality}\cite{wwnondiv}
Assume that the coefficient tensor $a=(a_{ij})\in  [C^1(\Omega)]^{d\times d}$. For any $v=\{v_0, v_b, \bv_g\}\in W_{h,k}^0$, there 
holds 
\begin{equation}\label{2.14:800}
\begin{split}
(v_0, \theta) =&\sum_{T\in{\cal T}_h} \sum_{i,j=1}^d
(a_{ij}\partial^2_{ij,w} v,
w)_T-\langle(v_{gi} -\partial_i v_0) n_j, ({\cal Q}_h-I)(a_{ij}w)\rangle_{\partial T}\\
&+ \langle (v_b-v_0) n_i,\partial_j( {\cal
Q}_h-I)(a_{ij}w)\rangle_{\partial T}.
\end{split}
\end{equation}
\end{lemma}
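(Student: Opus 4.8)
The plan is to start from the right-hand side of \eqref{2.14:800}, rewrite the volume contribution $\sum_{T}\sum_{i,j}(a_{ij}\partial^2_{ij,w}v,w)_T$ through the definition of the discrete weak Hessian, absorb the two correction boundary terms so that every $L^2$-projection disappears, and then perform a global integration by parts against the auxiliary equation \eqref{dual1} to recover $(v_0,\theta)$. The first move is to observe that $\partial^2_{ij,w}v|_T\in V_k(T)$ and that ${\cal Q}_h$ is the $L^2$ projection onto $V_{h,k}$, so that $(a_{ij}\partial^2_{ij,w}v,w)_T=(\partial^2_{ij,w}v,{\cal Q}_h(a_{ij}w))_T$. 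This makes the polynomial $\varphi={\cal Q}_h(a_{ij}w)\in V_k(T)$ an admissible test function in \eqref{2.4new}, which expresses $(a_{ij}\partial^2_{ij,w}v,w)_T$ in terms of $(\partial^2_{ij}v_0,{\cal Q}_h(a_{ij}w))_T$ together with boundary pairings carrying $\partial_j{\cal Q}_h(a_{ij}w)$ and ${\cal Q}_h(a_{ij}w)$.

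The key algebraic step is then to add the two correction terms $-\langle(v_{gi}-\partial_i v_0)n_j,({\cal Q}_h-I)(a_{ij}w)\rangle_{\partial T}$ and $+\langle(v_b-v_0)n_i,\partial_j({\cal Q}_h-I)(a_{ij}w)\rangle_{\partial T}$ from \eqref{2.14:800}. The projected pieces telescope: the boundary term in $\partial_j{\cal Q}_h(a_{ij}w)$ coming from \eqref{2.4new} combines with the $\partial_j{\cal Q}_h$-part of the correction to leave exactly $-\langle(v_b-v_0)n_i,\partial_j(a_{ij}w)\rangle_{\partial T}$, and similarly the two terms in ${\cal Q}_h(a_{ij}w)$ collapse to $+\langle(v_{gi}-\partial_i v_0)n_j,a_{ij}w\rangle_{\partial T}$. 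Moreover, since $\partial^2_{ij}v_0\in P_{k-2}(T)\subseteq V_k(T)$, the projection in the volume term is inert, i.e. $(\partial^2_{ij}v_0,{\cal Q}_h(a_{ij}w))_T=(\partial^2_{ij}v_0,a_{ij}w)_T$. Thus each element/index contribution reduces to the projection-free form $(\partial^2_{ij}v_0,a_{ij}w)_T-\langle(v_b-v_0)n_i,\partial_j(a_{ij}w)\rangle_{\partial T}+\langle(v_{gi}-\partial_i v_0)n_j,a_{ij}w\rangle_{\partial T}$.

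I would then integrate $(\partial^2_{ij}v_0,a_{ij}w)_T$ by parts twice to transfer both derivatives onto $a_{ij}w$ (legitimate since $a\in[C^1(\Omega)]^{d\times d}$ and $w$ is regular). After renaming $i\leftrightarrow j$ and using the symmetry $a_{ij}=a_{ji}$, the boundary terms involving $v_0$ and $\partial_i v_0$ produced by this integration by parts cancel precisely the $v_0$- and $\partial_i v_0$-pieces isolated above, while the volume terms sum to $\sum_{T}(v_0,\sum_{i,j}\partial^2_{ji}(a_{ij}w))_T=(v_0,\theta)$ by the auxiliary equation \eqref{dual1}. What remains is the purely boundary expression $\sum_{T}\sum_{i,j}\big[\langle v_{gi}n_j,a_{ij}w\rangle_{\partial T}-\langle v_b n_i,\partial_j(a_{ij}w)\rangle_{\partial T}\big]$, which I must show vanishes.

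The main obstacle, and the only genuinely analytic point, is this last cancellation. On each interior face it follows from the single-valuedness of $v_b$ and $\bv_g$ on $\E_h$ together with the continuity of the traces of $a_{ij}w$ and $\partial_j(a_{ij}w)$ — this is exactly where the global $C^1$-regularity of $a$ and the $H^{1+s}$-regularity of $w$ are used — so that the contributions of the two elements sharing the face carry opposite outward normals and cancel. On boundary faces, the first pairing vanishes because $w=0$ on $\partial\Omega$, and the second because $v_b=0$ for $v\in W_{h,k}^0$. Collecting these facts gives $\sum_{T}\sum_{i,j}\big[\langle v_{gi}n_j,a_{ij}w\rangle_{\partial T}-\langle v_b n_i,\partial_j(a_{ij}w)\rangle_{\partial T}\big]=0$, which establishes the identity \eqref{2.14:800}.
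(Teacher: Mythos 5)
Your argument is correct and is essentially the proof of this identity in the cited source \cite{wwnondiv} (the present paper only states the lemma and defers to that reference): test the discrete weak Hessian against ${\cal Q}_h(a_{ij}w)$ via (\ref{2.4new}), let the two correction terms strip off the projections, note $\partial^2_{ij}v_0\in P_{k-2}(T)\subseteq V_k(T)$ so the volume projection is inert, integrate by parts twice using the symmetry of $a$, and cancel the remaining face terms by the single-valuedness of $v_b$ and $\bv_g$ together with $v_b=w=0$ on $\partial\Omega$. The only points worth making explicit are that the final cancellation genuinely requires $\bv_g$ (not only $v_b$) to be single-valued across interior faces, and that the trace of $\partial_j(a_{ij}w)$ on $\partial T$ and the pointwise use of $\sum_{i,j}\partial^2_{ji}(a_{ij}w)=\theta$ need $w$ somewhat more regular than the bare $H^{1}$ under which the lemma is later invoked with $s=0$ --- a technicality inherited from the statement itself rather than introduced by your proof.
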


\begin{lemma}\label{Lemma:TechnicalEstimates:01}\cite{wwnondiv}
Assume that the coefficient matrix $a=(a_{ij}) \in [\Pi_{T\in\T_h} W^{1,\infty}(T)]^{d\times d}$. There exists a constant $C$ such that for any $v\in W_{h,k}^0$, we have
\begin{eqnarray}\label{2.14.100:10}
\left|\sum_{T\in{\cal T}_h} \sum_{i,j=1}^d  \langle(v_{gi}
-\partial_i v_0) n_j, ({\cal Q}_h-I)(a_{ij}w)\rangle_{\partial T}
\right| &\leq & Ch \ \|v\|_{2,h} \|\theta\|_{-1},
\\ \label{2.14.110:10} \left|\sum_{T\in{\cal T}_h} \sum_{i,j=1}^d
\langle(v_{b} -v_0) n_i,
\partial_j({\cal Q}_h-I)(a_{ij}w)\rangle_{\partial T}
\right| &\leq & Ch \  \|v\|_{2,h}\|\theta\|_{-1},
\end{eqnarray}
provided that the dual problem (\ref{Dual-Variational}) has the regularity estimate (\ref{regul}) with $s=0$.
\end{lemma}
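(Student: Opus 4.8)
The plan is to estimate both boundary sums face by face with the Cauchy--Schwarz inequality, absorbing the factor that depends on $v$ into the stabilizer $s_T(\cdot,\cdot)$ (which supplies the needed negative powers of $h_T$) and controlling the factor that depends on $w$---a trace of an $L^2$-projection error---by the trace inequality \eqref{trace-inequality}, the inverse trace inequality \eqref{x}, and the approximation properties of Lemma \ref{Lemma5.2} applied to $g_{ij}:=a_{ij}w\in H^1(T)$ (using $a_{ij}\in W^{1,\infty}(T)$). The dual regularity \eqref{regul} with $s=0$, i.e. $\|w\|_1\le C\|\theta\|_{-1}$, is invoked only at the very end to turn the resulting $\|w\|_1$ into $\|\theta\|_{-1}$.

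For \eqref{2.14.100:10} I would argue directly. On each $\partial T$, Cauchy--Schwarz gives $|\langle(v_{gi}-\partial_iv_0)n_j,({\cal Q}_h-I)g_{ij}\rangle_{\partial T}|\le \|v_{gi}-\partial_iv_0\|_{\partial T}\,\|({\cal Q}_h-I)g_{ij}\|_{\partial T}$. The first factor is exactly what the second term of $s_T$ controls, $\|\nabla v_0-\bv_g\|_{\partial T}\le h_T^{1/2}(s_T(v,v))^{1/2}$. For the second factor, the trace inequality \eqref{trace-inequality} applied to $({\cal Q}_h-I)g_{ij}$ together with the approximation bound \eqref{3.3-3} (with $m=2$, so that the $L^2$ part scales like $h_T$ and the $H^1$ part like $1$) yields $\|({\cal Q}_h-I)g_{ij}\|_{\partial T}\le Ch_T^{1/2}\|w\|_{1,T}$. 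Multiplying produces $Ch_T\,(s_T(v,v))^{1/2}\|w\|_{1,T}$ on each element; summing over $T$ and $i,j$, a discrete Cauchy--Schwarz inequality and $s_h(v,v)\le\|v\|_{2,h}^2$ give $Ch\,\|v\|_{2,h}\|w\|_1$, and \eqref{regul} closes the first estimate.

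The estimate \eqref{2.14.110:10} has the same skeleton, the $v$-factor now being controlled by the first term of $s_T$, $\|v_0-v_b\|_{\partial T}\le h_T^{3/2}(s_T(v,v))^{1/2}$, so that the target order is reached as soon as the companion factor obeys $\|\partial_j({\cal Q}_h-I)g_{ij}\|_{\partial T}\le Ch_T^{-1/2}\|w\|_{1,T}$. Writing $\partial_j({\cal Q}_h-I)g_{ij}=\partial_j{\cal Q}_h g_{ij}-\partial_j g_{ij}$, the polynomial part is harmless: the inverse trace inequality \eqref{x} and the $H^1$-stability of the $L^2$ projection give $\|\partial_j{\cal Q}_h g_{ij}\|_{\partial T}\le Ch_T^{-1/2}\|{\cal Q}_h g_{ij}\|_{1,T}\le Ch_T^{-1/2}\|w\|_{1,T}$. \textbf{The main obstacle} is the non-polynomial part $\partial_j g_{ij}=\partial_j(a_{ij}w)$: under the sole assumption $w\in H^1(\Omega)$ this function lies only in $L^2(T)$ and has no meaningful boundary trace, so the face-by-face bound cannot be applied to it. I would circumvent this by never forming that trace: keeping $\phi_{ij}:=({\cal Q}_h-I)g_{ij}$ intact and integrating by parts over each $T$ in the $v_0$-part of the pairing $\langle v_0 n_i,\partial_j\phi_{ij}\rangle_{\partial T}$. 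The interior term $(\partial_iv_0,\partial_j\phi_{ij})_T$ can then be reduced after one more integration by parts by invoking the crucial orthogonality $(\partial^2_{ij}v_0,\phi_{ij})_T=0$ (valid since $\partial^2_{ij}v_0\in P_{k-2}(T)\subseteq V_k(T)$ and $\phi_{ij}\perp V_k(T)$), while the genuinely second-order contribution $\sum_{i,j}(v_0,\partial^2_{ij}\phi_{ij})_T$ is collapsed by summation, via the dual equation \eqref{dual1}, into $\sum_{i,j}(v_0,\partial^2_{ij}{\cal Q}_h g_{ij})_T-(v_0,\theta)$; the $v_b$-part is single-valued across interior faces and vanishes on $\partial\Omega$ because $v\in W_{h,k}^0$. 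Bounding the surviving interior and boundary expressions by $\|v\|_{2,h}$, $\|w\|_1$ and $\|\theta\|_{-1}$ and again calling on \eqref{regul} then delivers \eqref{2.14.110:10}. Making this integration-by-parts / orthogonality / dual-equation bookkeeping produce the clean factor $h$---rather than an order-one remainder---is the delicate heart of the argument, and is precisely where the low ($H^1$) regularity of the dual solution makes itself felt.
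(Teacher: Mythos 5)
The paper itself contains no proof of this lemma: it is imported verbatim from \cite{wwnondiv}, where the argument is the direct, symmetric one for both bounds --- a weighted Cauchy--Schwarz inequality on each $\partial T$, with the $v$-factor absorbed into the corresponding term of $s_T(\cdot,\cdot)$ in \eqref{EQ:local-s-form} and the $w$-factor (the trace of the projection error, resp.\ of its gradient) controlled by the trace inequality \eqref{trace-inequality} and the approximation/stability properties of ${\cal Q}_h$, finishing with $\|w\|_1\le C\|\theta\|_{-1}$ from \eqref{regul} with $s=0$. Your treatment of \eqref{2.14.100:10} is exactly this argument, with the correct scalings $\|\nabla v_0-\bv_g\|_{\partial T}\le h_T^{1/2}(s_T(v,v))^{1/2}$ and $\|({\cal Q}_h-I)(a_{ij}w)\|_{\partial T}\le Ch_T^{1/2}\|w\|_{1,T}$, and it is correct.

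Your treatment of \eqref{2.14.110:10}, however, is not a proof. You correctly put your finger on a real subtlety --- for $w\in H^1$ the function $\partial_j(a_{ij}w)$ lies only in $L^2(T)$ and has no boundary trace, so the face-by-face bound cannot be applied to the non-polynomial part of $\partial_j({\cal Q}_h-I)(a_{ij}w)$ --- but the escape route you sketch (integrating by parts in the $v_0$-contribution, invoking the orthogonality $(\partial^2_{ij}v_0,\phi_{ij})_T=0$, collapsing the interior second-order term through the dual equation \eqref{dual1}, and cancelling the $v_b$-contributions across interior faces) is never carried out, and you concede that making it produce the factor $h$ is ``the delicate heart of the argument.'' That heart is precisely what a proof of \eqref{2.14.110:10} must supply. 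Worse, as described the manipulation does not bound the left-hand side of \eqref{2.14.110:10} at all: it re-derives a variant of the identity of Lemma \ref{Lemma:TechnicalEquality} in which that boundary sum never appears, so at best it would let one \emph{bypass} the lemma by redoing the whole $H^1$-error argument, not prove the stated inequality. Two of its intermediate steps are also suspect at the stated regularity: the cancellation of the $v_b$-terms over interior faces presupposes a single-valued trace of $n_i\,\partial_j\phi_{ij}$, the very object you set out to avoid forming, and $(v_0,\partial^2_{ij}\phi_{ij})_T$ is itself undefined for $\phi_{ij}\in H^1(T)$ unless kept paired and interpreted distributionally. The expected completion is parallel to your first estimate: $\|v_0-v_b\|_{\partial T}\le h_T^{3/2}(s_T(v,v))^{1/2}$ against $\bigl(\sum_T h_T^{3}\|\partial_j({\cal Q}_h-I)(a_{ij}w)\|_{\partial T}^2\bigr)^{1/2}\le Ch\|w\|_1$, the latter obtained from \eqref{trace-inequality}/\eqref{x} together with the $H^1$-stability of ${\cal Q}_h$; whether and how that last step is justified at $w\in H^1$ is exactly the point you raise, and identifying the difficulty is not the same as resolving it. As it stands, the second half of the lemma remains unproved.
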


\begin{lemma}\label{Lemma:TechnicalEstimates:02}
Assume that the coefficient matrix $a=(a_{ij}) \in \Pi_{T\in\T_h} [W^{2,\infty}(T)]^{d \times d}$ and $P_1(T)\subset V_k(T)$ for each element $T\in\T_h$. There exists a constant $C$ such that for any $v\in
W_{h,k}^0$, we have
\begin{eqnarray}\label{2.14.100:12}
\left|\sum_{T\in{\cal T}_h} \sum_{i,j=1}^d  \langle(v_{gi}
-\partial_i v_0) n_j, ({\cal Q}_h-I)(a_{ij}w)\rangle_{\partial T}
\right| & \leq & Ch^2 \ \|v\|_{2,h}\|\theta\|_{0},\\
\label{2.14.110:15} \left|\sum_{T\in{\cal T}_h} \sum_{i,j=1}^d
\langle(v_{b} -v_0) n_i,
\partial_j({\cal Q}_h-I)(a_{ij}w)\rangle_{\partial T}
\right|  &\leq & Ch^2 \  \|v\|_{2,h}\|\theta\|_{0},
\end{eqnarray}
provided that the regularity estimate (\ref{regul}) holds true with
$s=1$.
\end{lemma}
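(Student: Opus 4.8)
The plan is to follow the same Cauchy--Schwarz splitting used in Lemma~\ref{Lemma:TechnicalEstimates:01}, but to gain one extra power of $h$ by exploiting the three strengthened hypotheses: the higher smoothness $a\in W^{2,\infty}$, the regularity index $s=1$ in (\ref{regul}) (so that $w\in H^2(\Omega)$ with $\|w\|_2\le C\|\theta\|_0$), and the reproduction property $P_1(T)\subset V_k(T)$, which makes the $L^2$ projection ${\cal Q}_h$ exact on linear polynomials. For the first sum I would split each boundary pairing by Cauchy--Schwarz on $\pT$ into the factor $\big(\sum_{T}h_T^{-1}\|\bv_g-\nabla v_0\|^2_{\pT}\big)^{1/2}$, which is bounded by $(s_h(v,v))^{1/2}\le\|v\|_{2,h}$ directly from the definition (\ref{EQ:local-s-form}) of $s_T$, and the complementary factor $\big(\sum_{T}\sum_{i,j}h_T\|({\cal Q}_h-I)(a_{ij}w)\|^2_{\pT}\big)^{1/2}$. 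For the second sum I would instead pair against the weight $h_T^{-3}$, so that the first factor becomes $\big(\sum_T h_T^{-3}\|v_0-v_b\|^2_{\pT}\big)^{1/2}\le\|v\|_{2,h}$ and the complementary factor is $\big(\sum_T\sum_{i,j}h_T^3\|\partial_j({\cal Q}_h-I)(a_{ij}w)\|^2_{\pT}\big)^{1/2}$.

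Next I would reduce both complementary boundary factors to interior norms with the trace inequality (\ref{trace-inequality}). For the first one this gives $h_T\|({\cal Q}_h-I)(a_{ij}w)\|^2_{\pT}\le C\big(\|({\cal Q}_h-I)(a_{ij}w)\|^2_T+h_T^2\,|({\cal Q}_h-I)(a_{ij}w)|^2_{1,T}\big)$; for the second, applied to $\varphi=\partial_j({\cal Q}_h-I)(a_{ij}w)$, it gives $h_T^3\|\partial_j({\cal Q}_h-I)(a_{ij}w)\|^2_{\pT}\le C\big(h_T^2\,|({\cal Q}_h-I)(a_{ij}w)|^2_{1,T}+h_T^4\,|({\cal Q}_h-I)(a_{ij}w)|^2_{2,T}\big)$. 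Everything now rests on the approximation and stability estimates for ${\cal Q}_h$ applied to $g=a_{ij}w$.

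The key step is to establish, using $P_1(T)\subset V_k(T)$, the bounds $\|({\cal Q}_h-I)g\|_T\le Ch_T^2|g|_{2,T}$, $|({\cal Q}_h-I)g|_{1,T}\le Ch_T|g|_{2,T}$, and $|({\cal Q}_h-I)g|_{2,T}\le C|g|_{2,T}$. The first two are standard second-order approximation results; the third, which I expect to be the main obstacle, follows from writing $({\cal Q}_h-I)g=({\cal Q}_h-I)(g-p)$ for the averaged Taylor polynomial $p\in P_1(T)$ (legitimate since ${\cal Q}_h p=p$), applying an inverse inequality $|{\cal Q}_h(g-p)|_{2,T}\le Ch_T^{-2}\|g-p\|_T$ to the polynomial part, and closing with the Bramble--Hilbert estimate $\|g-p\|_T\le Ch_T^2|g|_{2,T}$. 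Since $a\in W^{2,\infty}$, I would then bound $|a_{ij}w|_{2,T}\le C\|w\|_{2,T}$ with $C$ depending on $\|a\|_{W^{2,\infty}}$.

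Finally I would substitute these bounds back. In both complementary factors every interior term collapses to $Ch_T^4\|w\|^2_{2,T}$; summing over $T$ and over $i,j$ gives $Ch^4\|w\|_2^2$, whence each complementary factor is $\le Ch^2\|w\|_2\le Ch^2\|\theta\|_0$ by the $H^2$ a priori estimate (\ref{regul}) with $s=1$. Multiplying by the $\|v\|_{2,h}$ factors produced by Cauchy--Schwarz yields the two claimed bounds $Ch^2\|v\|_{2,h}\|\theta\|_0$. The whole argument is parallel to Lemma~\ref{Lemma:TechnicalEstimates:01}; the single power of $h$ there becomes two powers here precisely because $P_1\subset V_k(T)$ upgrades the projection error from first to second order and because $w$ now lies in $H^2$ rather than only $H^1$.
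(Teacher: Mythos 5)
Your argument is correct, and it is the standard route for this lemma: the paper itself states Lemma \ref{Lemma:TechnicalEstimates:02} without proof (it parallels the corresponding result proved in \cite{wwnondiv}), and what you have written --- weighted Cauchy--Schwarz matched to the $h_T^{-1}$ and $h_T^{-3}$ weights in $s_T(\cdot,\cdot)$, the trace inequality \eqref{trace-inequality}, and second-order approximation plus $H^2$-stability of ${\cal Q}_h$ for $a_{ij}w$ via $P_1(T)\subset V_k(T)$, Bramble--Hilbert, an inverse inequality, and the $s=1$ regularity \eqref{regul} --- is exactly the intended argument. The only point worth making explicit is that the elementwise bound $|a_{ij}w|_{2,T}\leq C\|w\|_{2,T}$ uses the Leibniz rule together with $a_{ij}\in W^{2,\infty}(T)$, which you have correctly identified as the place where the strengthened smoothness hypothesis enters.
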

 
For convenience of analysis, in what follows of this paper, for any $\rho, \sigma \in V_{h, k}$, we shall employ the specific $c_h(\rho, \sigma)$ define in \eqref{ch}.
 
\begin{theorem}\label{Thm:H1errorestimate} 
Let $u_h=\{u_0, u_b, \bu_g\}\in W_{h, k}^0$ be the M-PDWG solution arising from the numerical scheme (\ref{2})-(\ref{32}). Assume that $a=(a_{ij})\in [C^1(\Omega)]^{d\times d}$ and the exact solution of the model problem (\ref{1}) is sufficiently regular such that $u\in H^{k+1}(\Omega)$. There exists a constant $C$ such that
\begin{equation}\label{e0-H1}
\left(\sum_{T\in\T_h}\|\nabla u_0 - \nabla u\|_T^2\right)^{\frac12}
\leq Ch^{k} \|u\|_{k+1},
\end{equation}
provided that the meshsize $h$ is sufficiently small and the dual
problem (\ref{dual1})-(\ref{dual2}) has $H^1$-regularity estimate (\ref{regul}) with $s=0$.
\end{theorem}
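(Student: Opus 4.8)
The plan is to run an Aubin--Nitsche duality argument built on the auxiliary problem \eqref{dual1}-\eqref{dual2} and the technical identity of Lemma \ref{Lemma:TechnicalEquality}, thereby gaining one power of $h$ over the discrete $H^2$-estimate of Theorem \ref{theoestimate}. I would first split the error into a projection part and a discrete part: writing $e_0=u_0-Q_0u$, we have $\nabla u_0-\nabla u=\nabla e_0+\nabla(Q_0u-u)$, and the projection contribution $\sum_{T}\|\nabla(Q_0u-u)\|_T^2$ is immediately bounded by $Ch^{2k}\|u\|_{k+1}^2$ through \eqref{3.2} of Lemma \ref{Lemma5.2} with $s=1,\ m=k$. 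So the task reduces to controlling the broken gradient $\sum_{T}\|\nabla e_0\|_T^2$.

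For the main duality step, I would take an arbitrary source $\theta$, let $w\in H^1(\Omega)\cap H_0^1(\Omega)$ solve \eqref{dual1}-\eqref{dual2} with $\|w\|_1\le C\|\theta\|_{-1}$ (regularity \eqref{regul} with $s=0$), and apply Lemma \ref{Lemma:TechnicalEquality} with $v=e_h$. This expresses $(e_0,\theta)$ as the volume term $\sum_{T}\sum_{i,j=1}^d(a_{ij}\partial^2_{ij,w}e_h,w)_T$ plus the two interface terms. Those two interface terms are exactly the ones bounded in Lemma \ref{Lemma:TechnicalEstimates:01} (case $s=0$) by $Ch\,\|e_h\|_{2,h}\|\theta\|_{-1}$; invoking Theorem \ref{theoestimate} to replace $\|e_h\|_{2,h}$ by $Ch^{k-1}\|u\|_{k+1}$ turns each into $Ch^{k}\|u\|_{k+1}\|\theta\|_{-1}$, the target order.

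The crux is the volume term. I would split $w=\Q_h w+(I-\Q_h)w$. The projected piece equals $b_h(e_h,\Q_h w)$, and the error equation \eqref{sehv2} rewrites it as $\ell_u(\Q_h w)+c_h(\gamma_h,\Q_h w)$. The subtlety is that the crude bounds $|\ell_u(\Q_h w)|\le Ch^{k-1}\|u\|_{k+1}\|w\|_0$ and $|c_h(\gamma_h,\Q_h w)|\le C\|\gamma_h\|_0\|\Q_h w\|_0$ both stall at order $h^{k-1}$, one power short. To recover the extra $h$ in $\ell_u$ I would use the $L^2$-orthogonality of $(I-\Q_h)\partial^2_{ij}u$ against $V_{h,k}$ together with $a\in[C^1(\Omega)]^{d\times d}$, writing $\ell_u(\Q_h w)=\sum_{T}\sum_{i,j=1}^d((I-\Q_h)\partial^2_{ij}u,(I-\Q_h)(a_{ij}\Q_h w))_T$ and bounding $\|(I-\Q_h)(a_{ij}\Q_h w)\|_T\le Ch_T\|\Q_h w\|_{1,T}\le Ch_T\|w\|_{1,T}$ by the local $H^1$-stability of the element-wise projection; with \eqref{3.3} this gives $Ch^{k}\|u\|_{k+1}\|w\|_1$. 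For $c_h(\gamma_h,\Q_h w)$ I would apply Cauchy--Schwarz in the $c_h$-inner product, $c_h(\gamma_h,\Q_h w)\le c_h(\gamma_h,\gamma_h)^{1/2}c_h(\Q_h w,\Q_h w)^{1/2}$, use $c_h(\gamma_h,\gamma_h)\le Ch^{2k-2}\|u\|_{k+1}^2$ from the proof of Theorem \ref{theoestimate}, and exploit the precise $h$-scaling of \eqref{ch} with inverse inequalities to get $c_h(\Q_h w,\Q_h w)\le Ch^{2}\|w\|_1^2$; hence this term is again $Ch^{k}\|u\|_{k+1}\|\theta\|_{-1}$. The unprojected piece is handled via $\|(I-\Q_h)w\|_T\le Ch_T\|w\|_{1,T}$ together with Lemma \ref{lemma7.2}, the ellipticity \eqref{matrix}, and Theorem \ref{theoestimate} to bound $\sum_{T}\sum_{i,j=1}^d\|\partial^2_{ij,w}e_h\|_T^2\le Ch^{2k-2}\|u\|_{k+1}^2$, once more producing order $h^{k}$.

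Collecting everything gives $(e_0,\theta)\le Ch^{k}\|u\|_{k+1}\|\theta\|_{-1}$ for every $\theta$, and the duality characterization of the (broken) $H^1$-seminorm then yields $\sum_{T}\|\nabla e_0\|_T^2\le Ch^{2k}\|u\|_{k+1}^2$, with the inter-element jump contributions controlled by $s_h(e_h,e_h)^{1/2}$; combining with the projection bound finishes the proof. The main obstacle is precisely recovering the extra power of $h$ in the two terms $\ell_u(\Q_h w)$ and $c_h(\gamma_h,\Q_h w)$, where a naive use of the continuity \eqref{norm} and \eqref{3.3} saturates at $h^{k-1}$. The gain hinges on (i) the smoothness $a\in[C^1(\Omega)]^{d\times d}$ and the orthogonality of $\Q_h$ for $\ell_u$, and (ii) the $h$-weighting engineered into $c_h$ in \eqref{ch}, which is exactly what makes $c_h(\Q_h w,\Q_h w)^{1/2}=O(h)$ rather than $O(1)$.
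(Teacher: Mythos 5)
Your overall architecture is the same as the paper's: a duality argument on \eqref{dual1}--\eqref{dual2} via Lemma \ref{Lemma:TechnicalEquality}, splitting $w=\Q_hw+(I-\Q_h)w$, rewriting the projected volume piece through the error equation \eqref{sehv2}, and exploiting the orthogonality of $\Q_h$ for $\ell_u$ and the $h$-weights of \eqref{ch} for the $c_h$-term (your Cauchy--Schwarz in the $c_h$-inner product is a legitimate variant of the paper's direct estimate). However, there are two genuine gaps.

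First, your treatment of the unprojected piece $\sum_T\sum_{i,j}(a_{ij}\partial^2_{ij,w}e_h,(I-\Q_h)w)_T$ relies on the claim $\sum_T\sum_{i,j}\|\partial^2_{ij,w}e_h\|_T^2\leq Ch^{2k-2}\|u\|_{k+1}^2$, deduced from Lemma \ref{lemma7.2}, ellipticity, and Theorem \ref{theoestimate}. This does not follow: the norm $\|\cdot\|_{2,h}$ controlled by Theorem \ref{theoestimate} only bounds the single scalar combination $\|\sum_{i,j}\Q_h(a_{ij}\partial^2_{ij}e_0)\|_T$, and ellipticity of the matrix $a$ does not let you recover the individual Hessian entries from it (for $a=I$ the combination is $\Delta e_0$, and $e_0=xy$ has $\Delta e_0=0$ but $\partial^2_{12}e_0\neq 0$). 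The paper's proof confronts exactly this obstruction: it subtracts the cell average $\bar a_{ij}$ to gain a factor $\varepsilon(h_T)\to 0$, uses the inverse inequality to trade $h\|\nabla^2 e_0\|_0$ for $\|\nabla e_0\|_0$, and then \emph{absorbs} the resulting $\varepsilon(h)\|\nabla e_0\|_0$ into the left-hand side for $h$ small. Without this kick-back your estimate of $J_3$ has no proof.

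Second, your concluding step is not valid as stated. You test with arbitrary $\theta$, obtain $|(e_0,\theta)|\leq Ch^{k}\|u\|_{k+1}\|\theta\|_{-1}$, and invoke a ``duality characterization of the broken $H^1$-seminorm.'' Since $e_0$ is in general discontinuous across interfaces, it is not in $H_0^1(\Omega)$, so $\sup_\theta (e_0,\theta)/\|\theta\|_{-1}$ is infinite and no such characterization is available; ``jumps controlled by $s_h(e_h,e_h)^{1/2}$'' is not an argument. The paper avoids this entirely by restricting to $\theta=-\nabla\cdot\eta$ with $\eta\in[C^1(\Omega)]^d$ vanishing on the skeleton $\E_h$: element-wise integration by parts then turns the left-hand side into exactly $(\nabla e_0,\eta)$ with no jump terms, $\|\theta\|_{-1}\leq\|\eta\|_0$, and density of such $\eta$ in $[L^2(\Omega)]^d$ gives the broken-gradient bound. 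This choice is also what makes the absorption of $\varepsilon(h)\|\nabla e_0\|_0$ in the first point possible, since the quantity being estimated must literally be $(\nabla e_0,\eta)$ for the kick-back to close.
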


\begin{proof}
Given $\theta = -\nabla\cdot\eta$ with $\eta\in [C^1(\Omega)]^d$ satisfying $\eta=0$ on
$\E_h$, assume $w$ is the solution of the dual problem
(\ref{dual1})-(\ref{dual2}).  Taking $v=e_h$ in Lemma (\ref{Lemma:TechnicalEquality})  yields
\begin{equation}\label{2.14:800:10-1}
\begin{split}
-(e_0, \nabla\cdot\eta) =&\sum_{T\in{\cal T}_h} \sum_{i,j=1}^d
(a_{ij}\partial^2_{ij,w} e_h,
w)_T-\langle(e_{gi} -\partial_i e_0) n_j, ({\cal Q}_h-I)(a_{ij}w)\rangle_{\partial T}\\
&+ \langle (e_b-e_0) n_i,\partial_j( {\cal
Q}_h-I)(a_{ij}w)\rangle_{\partial T}\\
= & I_1 - I_2 + I_3,
\end{split}
\end{equation}
where $I_j(j=1, 2, 3)$ are defined accordingly. Due to $\eta=0$ on
$\E_h$, using the integration by parts to (\ref{2.14:800:10-1}) gives
\begin{equation}\label{2.14:800:10}
(\nabla e_0, \eta) = I_1 - I_2 + I_3.
\end{equation}
From Lemma \ref{Lemma:TechnicalEstimates:01} and $H^1$-regularity estimate (\ref{regul}) with $s=0$, the terms $I_2$ and $I_3$ are bounded as follows
\begin{equation}\label{2.14:800:15}
|I_2| + |I_3| \leq C h \|\theta\|_{-1} \|e_h\|_{2,h} \leq C h
\|\eta\|_0\|e_h\|_{2,h}.
\end{equation}
Regarding to the term $I_1$, from the error equation (\ref{sehv2}),  we have
\begin{equation}\label{2.14.120}
\begin{split}
I_1 = & \sum_{T\in{\cal T}_h} \sum_{i,j=1}^d
(a_{ij}\partial^2_{ij,w} e_h, w)_T\\
=& \sum_{T\in{\cal T}_h} \sum_{i,j=1}^d (a_{ij}\partial^2_{ij,w}
e_h, {\cal Q}_h w)_T + (a_{ij}\partial^2_{ij,w} e_h, (I-{\cal Q}_h)
w)_T\\
=& \sum_{T\in{\cal T}_h} \sum_{i,j=1}^d ((I-{\cal
Q}_h)\partial_{ij}^2u, a_{ij} {\cal Q}_h w)_T
+c_h(\gamma_h, {\cal Q}_h w)
\\&+ \sum_{T\in{\cal T}_h}
\sum_{i,j=1}^d(a_{ij}\partial^2_{ij,w} e_h, (I-{\cal Q}_h) w)_T\\
=&J_1+J_2+J_3,
\end{split}
\end{equation}
where $J_i$ for $i=1, 2, 3$ are defined accordingly. As to the term $J_1$, from Cauchy Schwarz inequality, we have
\begin{equation}\label{EQ:New:2015:800}
\begin{split}
|J_1|=& \Big|\sum_{T\in{\cal T}_h} ((I-{\cal Q}_h)\partial_{ij}^2u, a_{ij} {\cal Q}_h w)_T\Big| \\ 
=& \Big|\sum_{T\in{\cal T}_h}|((I-{\cal Q}_h)\partial_{ij}^2u, (I-{\cal Q}_h) a_{ij} {\cal Q}_hw)_T \Big|\\ 
\leq& \Big(\sum_{T\in{\cal T}_h} \|(I-{\cal Q}_h)\partial_{ij}^2u\|_T^2\Big)^{\frac{1}{2}} \Big( \sum_{T\in{\cal T}_h}  \|(I-{\cal Q}_h) a_{ij} {\cal Q}_hw\|_T^2 \Big)^{\frac{1}{2}}\\
 \leq & C h  \|(I-{\cal Q}_h)\partial_{ij}^2u\|  \|w\|_{1}.
\end{split}
\end{equation} 
 As to the term $J_2$, using Cauchy Schwarz inequality,  the inverse inequality and \eqref{ch} gives
 \begin{equation}\label{ch1}
 \begin{split}
|J_2|=&|c_h(\gamma_h, {\cal Q}_h w)| \\
\leq &\Big|\sum_{T\in {\cal T}_h} h_T^2(\gamma_h, {\cal Q}_h w)_T \Big|
+ \Big|\sum_{T\in {\cal T}_h} h_T^3(\nabla\gamma_h, \nabla {\cal Q}_h w)_T\Big|
 \\&+ \Big| \sum_{T\in {\cal T}_h} h_T^4\sum_{i, j=1}^d(\partial_{ij}^2 \gamma_h, \partial_{ij}^2 {\cal Q}_h w)_T\Big|
 \\
  \leq & Ch^2\|\gamma_h\|_0\|w\|_0+ Ch\|\gamma_h\|_0\|w\|_0+   Ch\|\gamma_h\|_0\|w\|_1   
\\ \leq &Ch\|\gamma_h\|_0\|w\|_1.
\end{split}
\end{equation} 
As to the term $J_3$, using Cauchy Schwarz inequality and (\ref{qaij}), we have
\begin{equation}\label{EQ:New:2015:810}
\begin{split}
|J_3|=&\Big|\sum_{T\in{\cal T}_h}
\sum_{i,j=1}^d(a_{ij}\partial^2_{ij,w} e_h,  (I-{\cal Q}_h) w)_T\Big|\\
=&\Big|\sum_{T\in{\cal T}_h}
\sum_{i,j=1}^d((a_{ij}-\bar{a}_{ij})\partial^2_{ij,w} e_h, (I-{\cal Q}_h) w)_T\Big|\\
\leq &  \Big(\sum_{T\in{\cal T}_h} \sum_{i,j=1}^d \|a_{ij}-\bar{a}_{ij}\|_{L^\infty(T)}^2 \|\partial^2_{ij,w}
e_h\|^2_T \Big)^{\frac{1}{2}}
 \Big(\sum_{T\in{\cal T}_h} 
 \|(I-{\cal Q}_h) w\|^2_T\Big)^{\frac{1}{2}}\\
\leq & C h  \|w\|_{1}\Big(\sum_{T\in{\cal T}_h}\sum_{i,j=1}^d (\varepsilon (h_T))^2(  \|\partial_{ij}^2
e_0\|_T^2+s_T(e_h,e_h))\Big)^{\frac12},
\end{split}
\end{equation}
where $\bar{a}_{ij}$ is the average of $a_{ij}$ on the element $T$ and $\varepsilon(h_T) \to 0$ as $h\to 0$. Substituting
(\ref{EQ:New:2015:800}) - (\ref{EQ:New:2015:810}) into (\ref{2.14.120}) yields
\begin{equation}\label{EQ:New:2015:820}
\begin{split}
&|I_1| \\
\leq & C h \Big(\varepsilon(h) \|\nabla^2 e_0\|_0 +\varepsilon(h)
\|e_h\|_{2,h}
+ \sum_{i,j=1}^d \|(I-{\cal Q}_h)\partial^2_{ij} u\|_0 +\|\gamma_h\|_0  \Big)\|w\|_1\\
\leq & C \Big(\varepsilon(h) \|\nabla e_0\|_0 + h\varepsilon(h) \|e_h\|_{2,h} + h
\sum_{i,j=1}^d \|(I-{\cal Q}_h)\partial_{ij}^2 u\|_0 +h\|\gamma_h\|_0 
\Big)\|\eta\|_0,
\end{split}
\end{equation}
where we used the inverse inequality and the estimate
$\|w\|_1\leq C \|\theta\|_{-1}\leq C \|\eta\|_0$. Substituting
(\ref{EQ:New:2015:820}) and (\ref{2.14:800:15}) into
(\ref{2.14:800:10}) gives
$$
|(\nabla e_0, \eta)| \leq C \Big(\varepsilon(h) \|\nabla e_0\|_0 +
h(1+\varepsilon(h) ) \|e_h\|_{2,h} + h \sum_{i,j=1}^d \|(I-{\cal Q}_h)\partial_{ij}^2
u\|_0 +h\|\gamma_h\|_0 \Big)\|\eta\|_0.
$$
Note that the set of all such $\eta$ is dense in $L^2(\Omega)$. The
above inequality implies
$$
\|\nabla e_0\|_0\leq C \Big(\varepsilon(h) \|\nabla e_0\|_0 + h(1+\varepsilon(h))
\|e_h\|_{2,h} + h \sum_{i,j=1}^d \|(I-{\cal Q}_h)\partial_{ij}^2
u\|_0 +h\|\gamma_h\|_0 \Big).
$$
Therefore, we have
\begin{equation}\label{EQf:New:2015:820:100}
\|\nabla e_0\|_0\leq  C h \Big(\|e_h\|_{2,h} + \sum_{i,j=1}^d
\|(I-{\cal Q}_h)\partial_{ij}^2 u\|_0 + \|\gamma_h\|_0  \Big)
\end{equation}
provided that the meshsize $h$ is sufficiently small such that $1-C\varepsilon(h)>0$ and $\varepsilon(h) \to 0$. The inequality
(\ref{EQf:New:2015:820:100}), the error estimate
(\ref{erres}), and the estimate (\ref{3.3}) with $m=k$ completes the proof of the
estimate (\ref{e0-H1}) using the usual triangle inequality and the estimate (\ref{3.2}) with $m=k$.
\end{proof}

We further present the $L^2$ error estimate for the primal variable $u_h$.

\begin{theorem}\label{Thm:L2errorestimate} Assume that (1) the
coefficients $a_{ij}  \in C^1(\Omega)\cap
\left[ \Pi_{T\in\T_h} W^{2,\infty}(T)\right]$ for $i, j=1, \cdots, d$; (2) the dual problem (\ref{dual1})-(\ref{dual2}) satisfies 
$H^2$-regularity estimate (\ref{regul}) with $s=1$; and (3) $P_1(T)\subset V_k(T)$ for any $T\in\T_h$. There exists a constant $C$ such that
\begin{equation}\label{e0}
\|u_0 - u\|_0 \leq Ch^{k+1} \|u\|_{k+1},
\end{equation}
provided that the meshsize $h$ is sufficiently small.
\end{theorem}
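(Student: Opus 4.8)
The plan is to run a duality (Aubin--Nitsche) argument built on the dual problem (\ref{dual1})--(\ref{dual2}) and the technical identity of Lemma \ref{Lemma:TechnicalEquality}. I would fix an arbitrary $\theta\in L^2(\Omega)$ and let $w\in H^2(\Omega)\cap H_0^1(\Omega)$ be the corresponding dual solution, which by the assumed $H^2$-regularity (\ref{regul}) with $s=1$ satisfies $\|w\|_2\le C\|\theta\|_0$. Applying Lemma \ref{Lemma:TechnicalEquality} with $v=e_h$ expresses $(e_0,\theta)$ as $I_1-I_2+I_3$, exactly as in (\ref{2.14:800:10-1}). The target is to prove
$$
|(e_0,\theta)|\le Ch^{k+1}\|u\|_{k+1}\,\|\theta\|_0,
$$
so that taking the supremum over $\theta\neq 0$ gives $\|e_0\|_0\le Ch^{k+1}\|u\|_{k+1}$; the estimate (\ref{e0}) then follows from the triangle inequality $\|u_0-u\|_0\le\|e_0\|_0+\|Q_0u-u\|_0$ together with (\ref{3.2}) with $s=0$, $m=k$.

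For the boundary contributions $I_2$ and $I_3$ I would invoke Lemma \ref{Lemma:TechnicalEstimates:02} (the $s=1$ version), whose hypotheses $a_{ij}\in\Pi_{T\in\T_h}W^{2,\infty}(T)$ and $P_1(T)\subset V_k(T)$ are precisely assumptions (1) and (3); this yields $|I_2|+|I_3|\le Ch^2\|e_h\|_{2,h}\|\theta\|_0$. Inserting the discrete $H^2$-error bound $\|e_h\|_{2,h}\le Ch^{k-1}\|u\|_{k+1}$ from Theorem \ref{theoestimate} gives a contribution of order $h^{k+1}$. The improvement from the $O(h)$ bounds of Lemma \ref{Lemma:TechnicalEstimates:01} used in the $H^1$ proof to the present $O(h^2)$ bounds is exactly what supplies the extra power of $h$ needed here.

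The core of the argument is $I_1=\sum_{T}\sum_{i,j}(a_{ij}\partial^2_{ij,w}e_h,w)_T$, which I would split as $I_1=J_1+J_2+J_3$ via the error equation (\ref{sehv2}), mirroring (\ref{2.14.120}): $J_1=\sum((I-{\cal Q}_h)\partial^2_{ij}u,a_{ij}{\cal Q}_hw)$, $J_2=c_h(\gamma_h,{\cal Q}_hw)$, and $J_3=\sum(a_{ij}\partial^2_{ij,w}e_h,(I-{\cal Q}_h)w)$. For $J_1$, using the $L^2$-orthogonality of $(I-{\cal Q}_h)\partial^2_{ij}u$ against $V_{h,k}$, the $H^2$-stability of ${\cal Q}_h$, and $a_{ij}\in W^{2,\infty}$, one obtains $\|(I-{\cal Q}_h)(a_{ij}{\cal Q}_hw)\|_0\le Ch^2\|w\|_2$, so Cauchy--Schwarz and (\ref{3.3}) with $m=k$ give $|J_1|\le Ch^{k+1}\|u\|_{k+1}\|\theta\|_0$. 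For $J_3$, I would bound the weak Hessian by Lemma \ref{lemma7.2}, so that $(\sum\|\partial^2_{ij,w}e_h\|^2)^{1/2}\le C\|e_h\|_{2,h}$, and combine it with the $H^2$-approximation $\|(I-{\cal Q}_h)w\|_0\le Ch^2\|w\|_2$ (valid since $P_1\subset V_k$), giving $|J_3|\le Ch^2\|e_h\|_{2,h}\|w\|_2\le Ch^{k+1}\|u\|_{k+1}\|\theta\|_0$; notably, the higher dual regularity makes the average-subtraction/$\varepsilon(h)$ device of the $H^1$ proof unnecessary.

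The main obstacle, and the genuinely new ingredient absent from the PDWG analysis of \cite{wwnondiv}, is the term $J_2=c_h(\gamma_h,{\cal Q}_hw)$ produced by the added bilinear form. Using the explicit form (\ref{ch}) of $c_h$, I would treat each piece by Cauchy--Schwarz, apply the inverse inequality to the discrete multiplier $\gamma_h$ to absorb the negative powers of $h_T$ carried by $\nabla\gamma_h$ and $\partial^2_{ij}\gamma_h$, and use $\|w\|_2\le C\|\theta\|_0$ for the now-$H^2$ dual solution, so that the $h_T^2,h_T^3,h_T^4$ weights conspire to yield $|J_2|\le Ch^2\|\gamma_h\|_0\|w\|_2$. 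Inserting the multiplier estimate $\|\gamma_h\|_0\le Ch^{k-1}\|u\|_{k+1}$ from Theorem \ref{theoestimate} gives $|J_2|\le Ch^{k+1}\|u\|_{k+1}\|\theta\|_0$, confirming that the extra $c_h$-term does not degrade the optimal order. Collecting the bounds on $I_2,I_3,J_1,J_2,J_3$ establishes the displayed estimate for $|(e_0,\theta)|$ and completes the proof.
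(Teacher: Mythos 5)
Your overall architecture is the paper's: duality against the dual problem (\ref{dual1})--(\ref{dual2}), the identity of Lemma \ref{Lemma:TechnicalEquality} with $v=e_h$, Lemma \ref{Lemma:TechnicalEstimates:02} for the two boundary sums, the error equation (\ref{sehv2}) to split the volume term into the three pieces $J_1+J_2+J_3$, and the explicit form (\ref{ch}) together with inverse inequalities to show $|c_h(\gamma_h,{\cal Q}_h w)|\le Ch^2\|\gamma_h\|_0\|w\|_2$. All of that matches the paper's proof, including the treatment of the new $c_h$-term.

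There is, however, a genuine gap in your estimate of $J_3$. You claim that Lemma \ref{lemma7.2} gives $\bigl(\sum_{T}\sum_{i,j}\|\partial^2_{ij,w}e_h\|_T^2\bigr)^{1/2}\le C\|e_h\|_{2,h}$. Lemma \ref{lemma7.2} only yields $\|\partial^2_{ij,w}e_h\|_T^2\le C(\|\partial^2_{ij}e_0\|_T^2+s_T(e_h,e_h))$, and the quantity $\sum_T\sum_{i,j}\|\partial^2_{ij}e_0\|_T^2$ is \emph{not} controlled by $\|e_h\|_{2,h}^2$ with an $h$-independent constant: by (\ref{EQ:triple-bar}) the latter only sees the projected combination $\|\sum_{i,j}{\cal Q}_h(a_{ij}\partial^2_{ij}e_0)\|_T$, which can vanish on an element while individual second derivatives of $e_0$ do not (think of $\sum a_{ij}\partial^2_{ij}p=0$ for a nonlinear polynomial $p$). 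Consequently your bound $|J_3|\le Ch^2\|e_h\|_{2,h}\|w\|_2$ is unsupported, and your final collection omits the term that actually survives. The paper keeps $\|\nabla^2 e_0\|_0$ explicit in (\ref{EQ:New:2015:810:L2}), gains one extra power of $h$ by replacing $a_{ij}$ with its elementwise average $\bar a_{ij}$ (using $a\in C^1$), converts $h\|\nabla^2 e_0\|_0$ to $\|\nabla e_0\|_0$ by the inverse inequality, and then invokes the $H^1$ estimate (\ref{EQf:New:2015:820:100})/(\ref{e0-H1}) of Theorem \ref{Thm:H1errorestimate} to bound $\|\nabla e_0\|_0\le Ch^k\|u\|_{k+1}$. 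Your remark that the average-subtraction device is ``unnecessary'' is defensible only for the final rate (even without it, $h^2\|\nabla^2 e_0\|_0\le Ch\|\nabla e_0\|_0\le Ch^{k+1}\|u\|_{k+1}$), but the correct route still has to pass through $\|\nabla e_0\|_0$ and the $H^1$ theorem; it cannot be short-circuited through $\|e_h\|_{2,h}$. With $J_3$ repaired along these lines and $\|\nabla e_0\|_0$ added to the final list of terms, the rest of your argument closes as in the paper.
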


\begin{proof}  
Let $w$ be the solution of the dual problem (\ref{dual1})-(\ref{dual2}) for a given $\theta\in L^2(\Omega)$. Choosing $v=e_h$ in  Lemma \ref{Lemma:TechnicalEquality} yields
\begin{equation}\label{2.14:800:10:L2}
\begin{split}
(e_0, \theta) =&\sum_{T\in{\cal T}_h} \sum_{i,j=1}^d
(a_{ij}\partial^2_{ij,w} e_h,
w)_T-\langle(e_{gi} -\partial_i e_0) n_j, ({\cal Q}_h-I)(a_{ij}w)\rangle_{\partial T}\\
&+ \langle (e_b-e_0) n_i,\partial_j( {\cal
Q}_h-I)(a_{ij}w)\rangle_{\partial T}\\
= & J_1 - J_2 + J_3,
\end{split}
\end{equation}
where $J_i$ are defined accordingly for $i=1,2,3$. Using Lemma \ref{Lemma:TechnicalEstimates:02}, we obtain
\begin{equation}\label{2.14:800:15:L2}
|J_2| + |J_3| \leq C h^2 \|\theta\|_{0} \|e_h\|_{2,h}.
\end{equation}
As to the term $J_1$, using the error equation (\ref{sehv2})  gives rise to
\begin{equation}\label{2.14.120:L2}
\begin{split}
J_1 = & \sum_{T\in{\cal T}_h} \sum_{i,j=1}^d
(a_{ij}\partial^2_{ij,w} e_h, w)_T\\
=& \sum_{T\in{\cal T}_h} \sum_{i,j=1}^d (a_{ij}\partial^2_{ij,w}
e_h, {\cal Q}_h w)_T  + (a_{ij}\partial^2_{ij,w} e_h, (I-{\cal Q}_h)
w)_T\\
=& \sum_{T\in{\cal T}_h} \sum_{i,j=1}^d ((I-{\cal
Q}_h)\partial_{ij}^2u, a_{ij} {\cal Q}_h w)_T +c_h(\gamma_h, {\cal Q}_h w)\\
&+ \sum_{T\in{\cal T}_h}
\sum_{i,j=1}^d(a_{ij}\partial^2_{ij,w} e_h, (I-{\cal Q}_h) w)_T\\
=& I_1+I_2+I_3,
\end{split}
\end{equation}
where $I_i (i=1, 2, 3)$ are defined accordingly. Recall that $P_1(T)\subseteq V_k(T)$ and ${\cal Q}_h$ is the $L^2$
projection onto $V_k(T)$. As to the term $I_1$, using Cauchy-Schwarz inequality gives
\begin{equation}\label{EQ:New:2015:800:L2}
\begin{split}
|I_1|=&\Big|\sum_{T\in{\cal T}_h} \sum_{i,j=1}^d((I-{\cal Q}_h)\partial_{ij}^2u, a_{ij} {\cal Q}_h w)_T\Big| \\
=&
\Big|\sum_{T\in{\cal T}_h} \sum_{i,j=1}^d((I-{\cal Q}_h)\partial_{ij}^2u, (I-{\cal Q}_h) a_{ij} {\cal
Q}_hw)_T\Big|\\
\leq & \Big(\sum_{T\in{\cal T}_h} \sum_{i,j=1}^d\|(I-{\cal Q}_h)\partial_{ij}^2u\|^2_T\Big)^{\frac{1}{2}}\Big(\sum_{T\in{\cal T}_h} \sum_{i,j=1}^d\|(I-{\cal Q}_h) a_{ij}
{\cal Q}_hw\|_T^2\Big)^{\frac{1}{2}}\\
\leq & C h^2 \sum_{i,j=1}^d\|(I-{\cal Q}_h)\partial_{ij}^2u\|_0 \|w\|_{2}.
\end{split}
\end{equation}
  
As to the term $I_2$, using Cauchy-Schwarz inequality,  the inverse inequality and \eqref{ch} gives
\begin{equation}\label{c4}
\begin{split}
I_2=&\sum_{T\in {\cal T}_h}h_T^2(\gamma_h, {\cal Q}_h w)_T 
+ h_T^3(\nabla \gamma_h, \nabla {\cal Q}_h w)_T+\sum_{i, j=1}^d h_T^4(\partial_{ij}^2 \gamma_h, \partial_{ij}^2 {\cal Q}_h w)_T\\
\leq & Ch^2\Big(\sum_{T \in {\cal T}_h} \|\gamma_h\|_T^2\Big)^{\frac{1}{2}}\Big(\sum_{T \in {\cal T}_h}  \|{\cal Q}_h w\|_T^2\Big)^{\frac{1}{2}}\\
&+Ch^3\Big(\sum_{T \in {\cal T}_h} \|\nabla \gamma_h\|_T^2\Big)^{\frac{1}{2}}\Big(\sum_{T \in {\cal T}_h}  \|\nabla {\cal Q}_h w\|_T^2\Big)^{\frac{1}{2}}\\
&+Ch^4\Big(\sum_{T \in {\cal T}_h} \| \Delta \gamma_h\|_T^2\Big)^{\frac{1}{2}}\Big(\sum_{T \in {\cal T}_h}  \| \Delta {\cal Q}_h w\|_T^2\Big)^{\frac{1}{2}}\\
\leq &Ch^2  \|\gamma_h\|_0  \|w\|_2,
\end{split}
\end{equation} 

As to the term $I_3$, using (\ref{qaij}) yields
\begin{equation}\label{EQ:New:2015:810:L2}
\begin{split}
|I_3|=&|\sum_{T\in{\cal T}_h} \sum_{i,j=1}^d(a_{ij}\partial^2_{ij,w} e_h, (I-{\cal Q}_h) w)_T|\\
= &\ |\sum_{T\in{\cal T}_h} \sum_{i,j=1}^d((a_{ij}-\bar{a}_{ij})\partial^2_{ij,w} e_h, (I-{\cal Q}_h) w)_T|\\
\leq & \Big(\sum_{T \in {\cal T}_h} \sum_{i, j=1}^d \|a_{ij}-\bar{a}_{ij}\|_{L^\infty(T)}^2\|\partial^2_{ij,w}
e_h\|_T ^2\Big)^{\frac{1}{2}}  \Big(\sum_{T \in {\cal T}_h} \|(I-{\cal Q}_h) w\|_T^2\Big)^{\frac{1}{2}} \\
\leq & \ C h^3 \|w\|_{2} \Big(\sum_{T \in {\cal T}_h}\sum_{i, j=1}^d  \|\partial_{ij}^2
e_0\|_T^2+s_T(e_h,e_h)\Big)^{\frac{1}{2}},
\end{split}
\end{equation}
where $\bar{a}_{ij}$ is the average of $a_{ij}$ on the element $T\in {\cal T}_h$ such that $\|a_{ij}-\bar{a}_{ij}\|_{L^\infty(T)}\leq h_T$.

Using (\ref{EQ:New:2015:800:L2})-\eqref{EQ:New:2015:810:L2},  the inverse inequality and the regularity
assumption (\ref{regul}) for $s=1$, we have
\begin{equation}\label{EQ:New:2015:820:L2}
\begin{split}
|J_1| \leq & C ( h^3\|\nabla^2 e_0\|_0 + h^3 \|e_h\|_{2,h} +
h^2 \sum_{i,j=1}^d \|(I-{\cal Q}_h)\partial_{ij}^2 u\|_0 +h^2  \|\gamma_h\|_0
)\|w\|_2\\
\leq & C ( h^2\|\nabla e_0\|_0 + h^3 \|e_h\|_{2,h} + h^2
\sum_{i,j=1}^d \|(I-{\cal Q}_h)\partial_{ij}^2 u\|_0  +h^2  \|\gamma_h\|_0)\|\theta\|_0.
\end{split}
\end{equation}  

Substituting
(\ref{EQ:New:2015:820:L2}) and (\ref{2.14:800:15:L2}) into
(\ref{2.14:800:10:L2}) gives
$$
|(e_0, \theta)| \leq C h^2 \big( \|\nabla e_0\|_0 + \|e_h\|_{2,h} +
\sum_{i,j=1}^d \|(I-{\cal Q}_h)\partial_{ij}^2 u\|_0  +\|\gamma_h\|_0 
\big)\|\theta\|_0.
$$
This indicates
$$
\|e_0\|_0\leq C h^2 \big( \|\nabla e_0\|_0 + \|e_h\|_{2,h} +
\sum_{i,j=1}^d \|(I-{\cal Q}_h)\partial_{ij}^2 u\|_0 +\|\gamma_h\|_0
 \big),
$$
which, using (\ref{erres}),
(\ref{e0-H1}), (\ref{3.3}) with $m=k$, the usual triangle inequality and the estimate (\ref{3.2}) with $m=k$, completes the proof  of the theorem provided that the meshsize $h$ is sufficiently small.

\end{proof}

\begin{remark} \cite{wwnondiv} The optimal order error estimate (\ref{e0}) is based on the assumption that $P_1(T)\subseteq V_2(T)$, which is used to derive (\ref{2.14:800:15:L2}) and (\ref{EQ:New:2015:800:L2})-(\ref{EQ:New:2015:810:L2}). When it comes to the case of $P_1(T)\nsubseteq V_2(T)$, those inequalities are
modified by replacing $\|w\|_{2,T}$ by $h_T^{-1}\|w\|_{1,T}$. The conclusion is stated as follows: We assume (1) the coefficients $a_{ij}\in C^1(\Omega)$ for $i, j=1, \cdots, d$, (2) the meshsize $h$ is sufficiently small, and (3) the dual problem
(\ref{dual1})-(\ref{dual2}) satisfies  the $H^1$-regularity estimate (\ref{regul}) for $s=0$. The sub-optimal order error estimate holds true
\begin{equation*}\label{e0-L2-suboptimal}
\|u_0 - u\|_0 \leq Ch^{k} \|u\|_{k+1}.
\end{equation*}
\end{remark}
 
We introduce the following norms for the two boundary components
$u_b$ and $\bu_g$; i.e.,
\begin{equation*}\label{EQ:eb-eg-L2norm}
\|e_b\|_{0}:=\Big(\sum_{T\in {\cal T}_h} h_T\|e_b\|_{\partial
T}^2\Big)^{\frac{1}{2}},\quad \|\be_g\|_{0}:=\Big(\sum_{T\in {\cal
T}_h} h_T\|\be_g\|_{\partial T}^2\Big)^{\frac{1}{2}}.
\end{equation*}

\begin{theorem}\label{Thm:L2errorestimate-ub}\cite{wwnondiv}
Under the assumptions of Theorem \ref{Thm:L2errorestimate}, there
exists a constant $C$ such that
\begin{eqnarray*}\label{eb}
\|u_b- Q_b u\|_{0} &\leq & Ch^{k+1} \|u\|_{k+1},\\
\|\bu_g - {\bf Q}_b \nabla u\|_{0} &\leq & Ch^{k}
\|u\|_{k+1}.\label{eg}
\end{eqnarray*}
\end{theorem}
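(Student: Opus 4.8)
The plan is to split each boundary error into a part controlled by the stabilizer $s_h$ and an interior-trace part controlled by the volume $L^2$/$H^1$ errors, and then to balance powers of $h_T$ so that the strong weights $h_T^{-3}$ and $h_T^{-1}$ appearing in $s_T$ in (\ref{EQ:local-s-form}) are traded for the weight $h_T$ appearing in the norms $\|e_b\|_0$ and $\|\be_g\|_0$ defined above. Recall that $e_h=u_h-Q_hu=\{e_0,e_b,\be_g\}$, so $e_b=u_b-Q_bu$ and $\be_g=\bu_g-{\bf Q}_g(\nabla u)$. On each $\pT$ I would write $e_b=(e_b-e_0)+e_0$ and $\be_g=(\be_g-\nabla e_0)+\nabla e_0$ and apply the triangle inequality.

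First I would handle the jump contributions. By the definitions of $s_T$ and of $\|\cdot\|_{2,h}$ in (\ref{EQ:triple-bar}) we have $s_h(e_h,e_h)\le\|e_h\|_{2,h}^2\le Ch^{2k-2}\|u\|_{k+1}^2$ from the main estimate (\ref{erres}) of Theorem \ref{theoestimate}. Matching powers of $h_T$ then gives
$$
\sum_{T\in\T_h} h_T\|e_b-e_0\|_{\pT}^2=\sum_{T\in\T_h} h_T^4\bigl(h_T^{-3}\|e_0-e_b\|_{\pT}^2\bigr)\le h^4\, s_h(e_h,e_h)\le Ch^{2k+2}\|u\|_{k+1}^2,
$$
and, likewise, $\sum_{T} h_T\|\be_g-\nabla e_0\|_{\pT}^2=\sum_T h_T^2\bigl(h_T^{-1}\|\nabla e_0-\be_g\|_{\pT}^2\bigr)\le h^2\,s_h(e_h,e_h)\le Ch^{2k}\|u\|_{k+1}^2$. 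These already deliver the target orders $h^{k+1}$ and $h^{k}$ for the jump parts.

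Next I would bound the interior-trace contributions $\sum_T h_T\|e_0\|_{\pT}^2$ and $\sum_T h_T\|\nabla e_0\|_{\pT}^2$. Since $e_0$ and $\nabla e_0$ are polynomials on each $T$, the inverse trace inequality (\ref{x}) yields $\sum_T h_T\|e_0\|_{\pT}^2\le C\|e_0\|_0^2$ and $\sum_T h_T\|\nabla e_0\|_{\pT}^2\le C\sum_T\|\nabla e_0\|_T^2$. To control these volume quantities I would route them through the already-established rates: writing $e_0=(u_0-u)+(u-Q_0u)$ and combining the $L^2$ estimate (\ref{e0}) of Theorem \ref{Thm:L2errorestimate} with the projection bound (\ref{3.2}) (with $m=k$, $s=0$) gives $\|e_0\|_0\le Ch^{k+1}\|u\|_{k+1}$; writing $\nabla e_0=(\nabla u_0-\nabla u)+\nabla(u-Q_0u)$ and combining the $H^1$ estimate (\ref{e0-H1}) of Theorem \ref{Thm:H1errorestimate} with (\ref{3.2}) (with $m=k$, $s=1$) gives $\bigl(\sum_T\|\nabla e_0\|_T^2\bigr)^{1/2}\le Ch^{k}\|u\|_{k+1}$. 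Assembling the jump and interior-trace parts via the triangle inequality then finishes both estimates.

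The individual steps are routine, so the genuine content of the argument — and the only point needing care — is twofold: the bookkeeping of $h_T$ powers that lets the heavily weighted stabilizer ($h_T^{-3}$ and $h_T^{-1}$) be converted into the $h_T$-weighted boundary norms, and the recognition that the interior traces must be passed through the sharp $L^2$ and $H^1$ volume estimates (Theorems \ref{Thm:L2errorestimate} and \ref{Thm:H1errorestimate}) rather than through $s_h$ alone, since $s_h$ only controls the differences $e_0-e_b$ and $\nabla e_0-\be_g$ and not $e_b$ and $\be_g$ themselves.
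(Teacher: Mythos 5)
Your argument is correct. The paper gives no proof of this theorem here (it is quoted from \cite{wwnondiv}), and your route --- splitting $e_b=(e_b-e_0)+e_0$ and $\be_g=(\be_g-\nabla e_0)+\nabla e_0$, trading the $h_T^{-3}$ and $h_T^{-1}$ stabilizer weights for the $h_T$-weighted boundary norms via $s_h(e_h,e_h)\le\|e_h\|_{2,h}^2\le Ch^{2k-2}\|u\|_{k+1}^2$, and passing the interior traces through the inverse trace inequality (\ref{x}) together with Theorems \ref{Thm:L2errorestimate} and \ref{Thm:H1errorestimate} and the projection estimate (\ref{3.2}) --- is precisely the standard argument used in the cited reference; the only point worth flagging is that invoking the $H^1$ estimate (\ref{e0-H1}) under the hypotheses of Theorem \ref{Thm:L2errorestimate} tacitly uses that the $H^2$-regularity assumption ($s=1$) subsumes the $H^1$-regularity ($s=0$) required there.
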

 
\section{Numerical Experiments}\label{Section:NE}
A series of the numerical results are illustrated to verify the accuracy of the theory developed for the M-PDWG method (\ref{2})-(\ref{32}). 
 
We shall take the lowest order WG element with $k=2$ on triangular
partitions as an example in the implementation. The finite element spaces are thus respectively given by
$$
W_{h,2}=\{v=\{v_0,v_b, \bv_g\}:\ v_0\in P_2(T), v_b\in P_2(e),
\bv_g\in [P_1(e)]^2, \forall T\in {\cal T}_h, e\in \E_h \},
$$ 
$$
V_{h,2}=\{\sigma: \ \sigma|_T \in V_2(T),\ \forall T\in {\cal T}_h
\},
$$
where both $V_2(T)=P_1(T)$ and $V_2(T)=P_0(T)$ are considered. A finite element function $v\in W_{h,2}$ is named $C^0$-type if $v_b= v_0|_\pT$ for each element $T$. The $C^0$-type WG element leads to a linear system with less computational complexity compared with the general WG elements. However, the $C^0$ continuity does not permit the availability of polygonal elements. Note that the theoretical results developed in this paper could be generalized to $C^0$-type triangular elements without any difficulty. The $C^0$-type WG element with $V_2(T)=P_1(T)$ is called the $P_2(T)/[P_1(\pT)]^2/P_1(T)$ element; and the $C^0$-type WG element with $V_2(T)=P_0(T)$ is called the $P_2(T)/[P_1(\pT)]^2/P_0(T)$ element.

Three domains are used in our numerical experiments: (1) the unit square domain $\Omega_1=(0,1)^2$; (2) the square domain $\Omega_2=(-1,1)^2$; and (3) the non-convex L-shaped domain $\Omega_3$ with vertices $A_0=(0,0), \ A_1=(2,0), \ A_2=(1,1), \ A_3=(1,2),$ and $A_4=(0,2)$. Starting from a given initial coarse triangulation of the domain, the triangular partition is obtained by successively dividing each coarse level triangle into four congruent sub-triangles through connecting the mid-points on each edge of each triangle.

Let $u_h=\{u_0, \bu_g\}\in W_{h,2}$ and $\lambda_h\in V_{h,2}$ 
be the M-PDWG solution arising from the scheme
(\ref{2})-(\ref{32}). Recall that the exact solution of Lagrange multiplier is $\lambda=0$. These numerical solutions are compared with the interpolants of the corresponding exact solutions; i.e.,
$$
e_h=\{e_0,\textbf{e}_g\}=\{u_0- I_h u, \ \bu_g - {\bf I}_g (\nabla
u)\},\quad \gamma_h=\lambda_h-0,
$$
where $I_h u$ is the Lagrange interpolation of the exact solution $u$ on each triangular element using three vertices and three mid-points on the edges, and ${\bf I}_g (\nabla u)$ is the linear interpolant of $\nabla u$ on each edge $e\in \E_h$.  
The following $L^2$ norms are employed to measure the errors:
\begin{eqnarray*}
 &&  \|e_0\|_0=\Big(\sum_{T\in {\cal T}_h}
\int_T |e_0|^2 dT\Big)^{\frac{1}{2}},\qquad 
\|\textbf{e}_g\|_0=\Big(\sum_{T\in {\cal T}_h} h_T
\int_{\partial T}
|\textbf{e}_g|^2 ds\Big)^{\frac{1}{2}}, \\
 &&  \|\gamma_h\|_0=\Big(\sum_{T\in {\cal
T}_h} \int_T |\gamma_h|^2 dT\Big)^{\frac{1}{2}}.
\end{eqnarray*}
 
\noindent {\bf Test Case 1.} Find $u$ such that
\begin{equation}\label{Test-Problem}
\begin{split}
\sum_{i, j=1}^2 a_{ij}\partial^2_{ij}u= &f,\quad \text{in}\
\Omega,\\
u = & g,\quad \text{on}\ \partial\Omega,
\end{split}
\end{equation}
where $\Omega=\Omega_i (i=1, 3)$, and the exact solution is given by $u=\sin(x_1)\sin(x_2)$.

Tables \ref{NE:TRI:Case2-1}-\ref{NE:TRI:Case2-2} show the
numerical results for the M-PDWG method \eqref{2}-\eqref{32} for the test problem (\ref{Test-Problem}) when the $C^0$-$P_2(T)/[P_1(\pT)]^2/P_1(T)$ element is applied. We observe from Tables \ref{NE:TRI:Case2-1}-\ref{NE:TRI:Case2-2} that the convergence rates for $e_0$ in the discrete $L^2$-norm are of orders ${\cal O}(h^4)$ and ${\cal O}(h^{3.6})$ on the unit square domain $\Omega_1$ and on the L-shaped domain $\Omega_3$, respectively. The convergence rates for $\be_g$ and $\gamma_h$ in the discrete $L^2$ norm are of orders ${\cal O}(h^2)$ and ${\cal O}(h)$ on both $\Omega_1$ and $\Omega_3$ respectively. Note that the expected optimal convergence rates for $e_0$, $\be_g$ and $\gamma_h$ in the discrete $L^2$-norm on the convex domain $\Omega_1$ are of orders ${\cal O}(h^3)$, ${\cal O}(h^2)$ and ${\cal O}(h)$, respectively. When it comes to the non-convex L-shaped domain $\Omega_3$, the theoretical order of convergence for $e_0$ in the discrete $L^2$-norm should be between ${\cal O}(h^2)$ and ${\cal O}(h^3)$ due to the lack of  $H^2$-regularity required for the dual problem (\ref{dual1})-(\ref{dual2}). However, the theoretical rates of convergence for $\be_g$ and $\gamma_h$ remain to be of orders ${\cal O}(h^2)$ and ${\cal O}(h)$, respectively. It is clear that the numerical results are greatly consistent with the theory for $\be_g$ and $\gamma_h$ in the discrete $L^2$-norm, and outperform the theory for $e_0$ in the discrete $L^2$-norm for the case of smooth solutions with smooth coefficients on uniform triangular partitions.

\begin{table}[h!]
\begin{center}
\caption{Test Case 1: 
 Convergence rates for $C^0$-$C^0$-$P_2(T)/[P_1(\pT)]^2/P_1(T)$ element on $\Omega_1$.}\label{NE:TRI:Case2-1}
\begin{tabular}{|c|c|c|c|c|c|c|}
\hline
$1/h$  & $\|e_0\|_0 $ & order &  $\|\be_g\|_{0}$  & order  &   $\|\gamma_h\|_0$  & order  \\
\hline
1 &0.006248	&&	0.1260 	&&	3.36E-04	&
\\
\hline
2& 0.001470 &	2.087 &	0.04477 &	1.493&	 6.51E-04& -0.9546
\\
\hline
4&1.39E-04&	3.399&	0.01157 &1.952 &2.84E-04	&1.195
\\
\hline
8&1.03E-05&	3.753 &	0.002843	&2.025 	&1.32E-04&	1.102
\\
\hline
16&6.97E-07&	3.891&	7.02E-04&	2.017 &	6.43E-05	&1.043
\\
\hline
32& 4.54E-08&	3.940 &	1.75E-04&	2.007 &	3.17E-05&	1.018  
\\
\hline
\end{tabular}
\end{center}
\end{table}

\begin{table}[h!]
\begin{center}
\caption{Test Case 1: 
Convergence rates for $C^0$-$P_2(T)/[P_1(\pT)]^2/P_1(T)$ element on $\Omega_3$.}\label{NE:TRI:Case2-2}
\begin{tabular}{|c|c|c|c|c|c|c|}
\hline
$1/h$        & $\|e_0\|_0 $ & order &  $\|\be_g\|_{0} $  & order  &   $\|\gamma_h\|_0$  & order  \\
\hline
1	& 0.01676	&&	0.4804 	&&	0.004498	&
\\
\hline
2	&0.002489 &	2.751&	0.1248&	1.945 &	0.001956&	1.201 
\\
\hline
4&	2.30E-04	&3.435 &	0.03100&	2.009&	8.76E-04&	1.160
\\
\hline
8&	1.94E-05&	3.572&	0.007674&	2.014 	&4.13E-04	&1.082 
\\
\hline
16&	1.61E-06	&3.585 &	0.001907 &	2.008 &	2.02E-04	&1.035
\\
\hline
32&	1.37E-07	&3.557 &	4.75E-04	&2.006&	9.99E-05&	1.015
\\
\hline
\end{tabular}
\end{center}
\end{table}

\noindent{\bf Test Case 2.}  Find $u$ such that
\begin{equation}\label{EQ:NE:500}
\begin{split}
\sum_{i,j=1}^2 (1+\delta_{ij}) \frac{x_i}{|x_i|}\frac{x_j}{|x_j|}
\partial^2_{ij} u & = f\qquad \mbox{in } \Omega,\\
u & = 0\qquad \mbox{on } \partial\Omega,
\end{split}
\end{equation}
where $\Omega_2=(-1,1)^2$, and the exact solution is 
$u= x_1 x_2 (1-e^{1-|x_1|}) (1-e^{1-|x_2|})$. It is easy to check the Cord\`es condition (\ref{cordes}) is
satisfied for the test problem (\ref{EQ:NE:500}) with $\varepsilon = 3/5$
and the coefficient matrix $a=(a_{ij})$ is discontinuous across the $x_i (i=1, 2)$ axis.  

Table \ref{NE:TRI:Case10-1} presents the numerical performance of the M-PDWG scheme \eqref{2}-\eqref{32} for the
test problem (\ref{EQ:NE:500}) when the $C^0$-$P_2(T)/[P_1(\pT)]^2/P_1(T)$ element is employed. The numerical results indicate that the convergence rate for $\be_g$ in the discrete $L^2$ norm is of an expected optimal order ${\cal O}(h^2)$. The convergence rate for the Lagrange multiplier in the discrete $L^2$ norm seems to be of an order higher than the expected order ${\cal O}(h)$. The convergence order for $e_0$ in the discrete $L^2$ norm seems to be of an order ${\cal O}(h^2)$. Note that it is not clear to us whether the dual problem (\ref{dual1})-(\ref{dual2}) has the regularity required for the convergence analysis. There are no theoretical results on the convergence rate for $e_0$ in the discrete $L^2$ norm. Table \ref{NE:TRI:Case10-2} shows the numerical results for the test problem (\ref{EQ:NE:500}) when the $C^0$-$P_2(T)/[P_1(\pT)]^2/P_0(T)$ element is applied. We observe from Table \ref{NE:TRI:Case10-2} that the convergence rates for $e_0$, $\be_g$ and $\gamma_h$ in the discrete $L^2$ norm seem to be a little higher than the convergence order corresponding to the case  when the $C^0$-$P_2(T)/[P_1(\pT)]^2/P_1(T)$ element is employed. 
 
\begin{table}[h!]
\begin{center}
\caption{Test Case 2: Convergence rates for $C^0$-
$P_2(T)/[P_1(\pT)]^2/P_1(T)$ element on $\Omega_2$.}\label{NE:TRI:Case10-1}
\begin{tabular}{|c|c|c|c|c|c|c|}
\hline
$2/h$        & $\|e_0\|_0 $ & order &  $\|\be_g \|_0 $  & order  &   $\|\gamma_h\|_0$  & order  \\
\hline 
1 &0.6160 	&&	2.554&&	1.000 &
\\
\hline
2&	0.4621&	0.4148 &	1.676 &	0.6074 	&0.8970	&0.1572
\\
\hline
4&	0.1389&	1.734 &	1.006&	0.7369&	3.270 	&-1.866 \\
\hline
8& 	0.02019 &	2.782 &	0.1339 &	2.909 &	0.6337&	2.368
\\
\hline
16&	0.006505 &	1.634 &	0.03229	&2.052 &0.2249	&1.494 \\
\hline
32&	0.001640	&1.988 	&0.007814	&2.047&0.09469 &	1.248
\\
\hline
\end{tabular}
\end{center}
\end{table}

Figures \ref{test1-1}-\ref{test1-2} illustrate the numerical error for the Lagrange multiplier $\lambda_h$ when the $C^0$-$P_2(T)/[P_1(\pT)]^2/P_1(T)$ element and the $C^0$-$P_2(T)/[P_1(\pT)]^2/P_0(T)$ element are employed respectively, compared with the PDWG scheme proposed in \cite{wwnondiv}.

\begin{figure}[h]
\centering
\begin{tabular}{cc}
\resizebox{2.2in}{2in}{\includegraphics{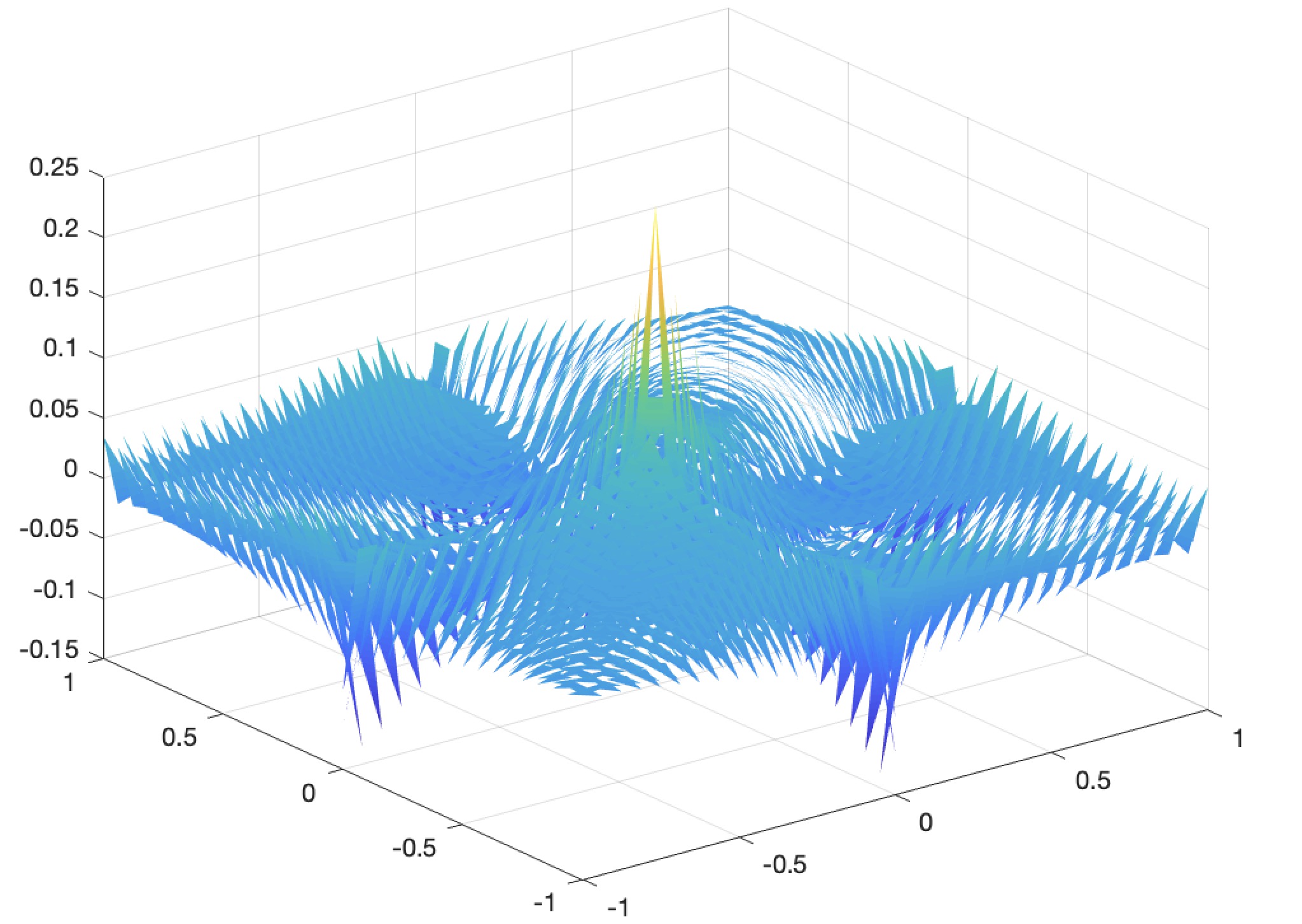}}
\resizebox{2.2in}{2in}{\includegraphics{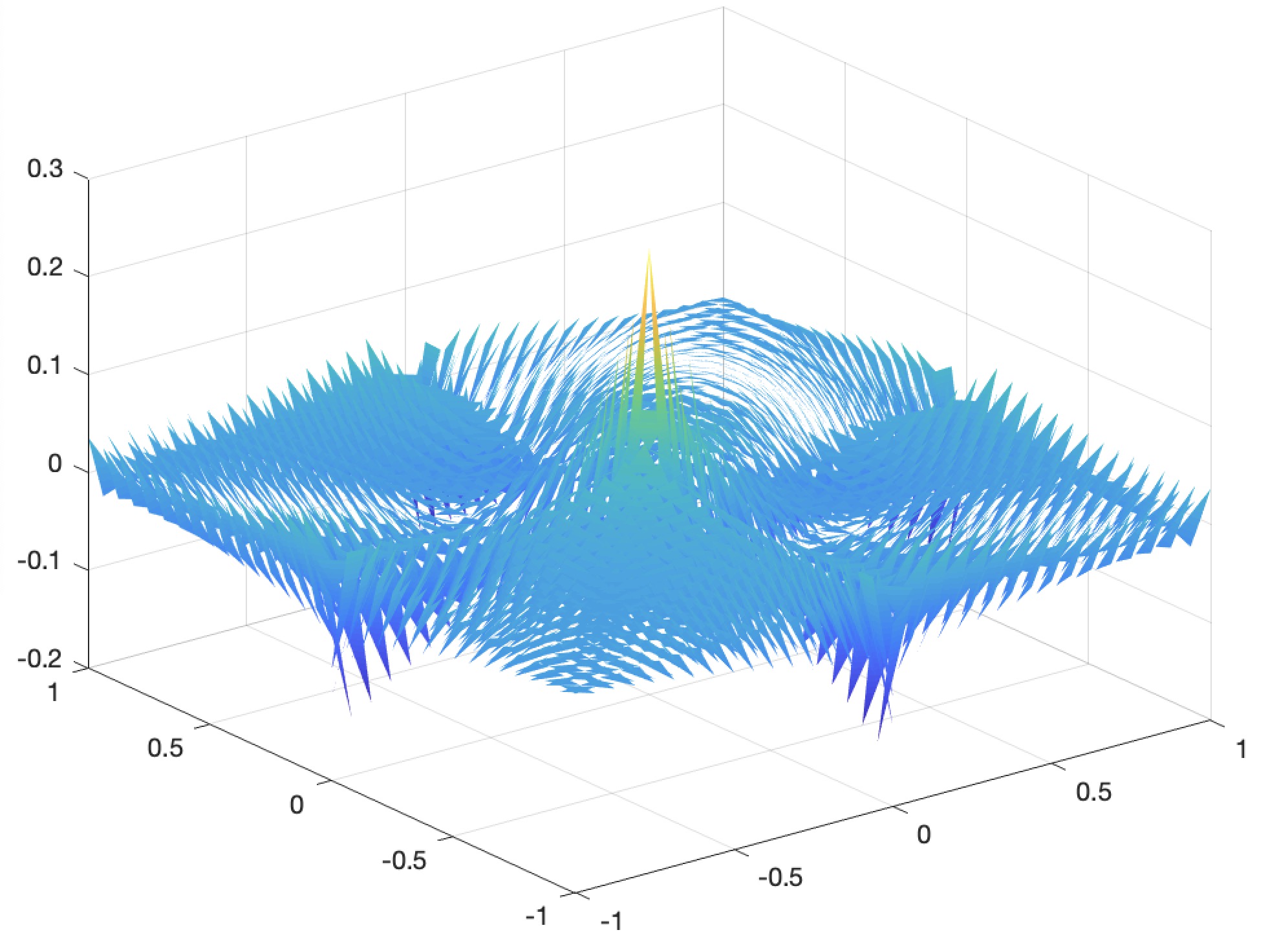}}
\end{tabular}
\caption{Test Case 2: Numerical error for Lagrange multiplier when $C^0$-$P_2(T)/[P_1(\pT)]^2/P_1(T)$ element is applied: left figure is without the term $c(\cdot, \cdot)$ proposed in \cite{wwnondiv}; right figure is with the term $c(\cdot, \cdot)$ proposed in this paper.}
\label{test1-1}
 \end{figure}

\begin{table}[h!]
\begin{center}
\caption{Test Case 2: Convergence rates for $C^0$-
$P_2(T)/[P_1(\pT)]^2/P_0(T)$ element on $\Omega_2$.}\label{NE:TRI:Case10-2}
\begin{tabular}{|c|c|c|c|c|c|c|}
\hline
$2/h$        & $\|e_0\|_0 $ & order &  $\|\be_g \|_0 $  & order  &   $\|\gamma_h\|_0$  & order  \\
\hline
1&	0.1590 	&&	0.7950 	&&	0.07950 	&
\\
\hline
2&	0.2253&	-0.5027&	1.383 &	-0.7982 &	0.3321	&-2.062 
\\
\hline
4&	0.1963 &	0.1984&	0.7627&	0.8582&	0.2444 &	0.4423
\\
\hline
8&	0.06727&	1.545&	0.2109 &	1.854 &	0.1349	&0.8577 
\\
\hline
16	&0.01536 &	2.130&	0.04616 &	2.192 	&0.05452 &1.307
\\
\hline
32&	0.003276 &	2.230 &	0.01020 &	2.178 	&0.02134	&1.354
\\
\hline
\end{tabular}
\end{center}
\end{table}

\begin{figure}[h]
\centering
\begin{tabular}{cc}
\resizebox{2.2in}{2in}{\includegraphics{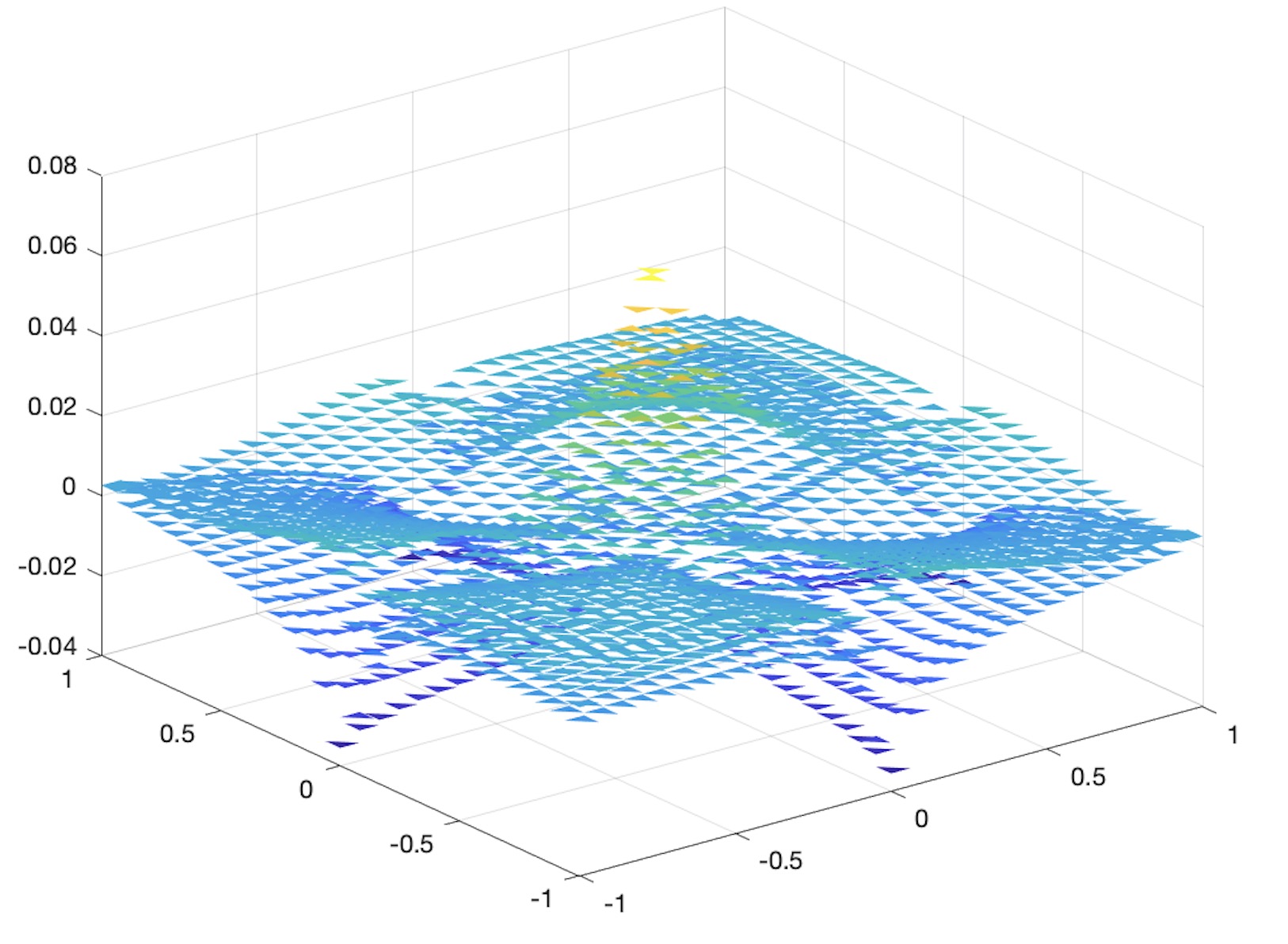}}
\resizebox{2.2in}{2in}{\includegraphics{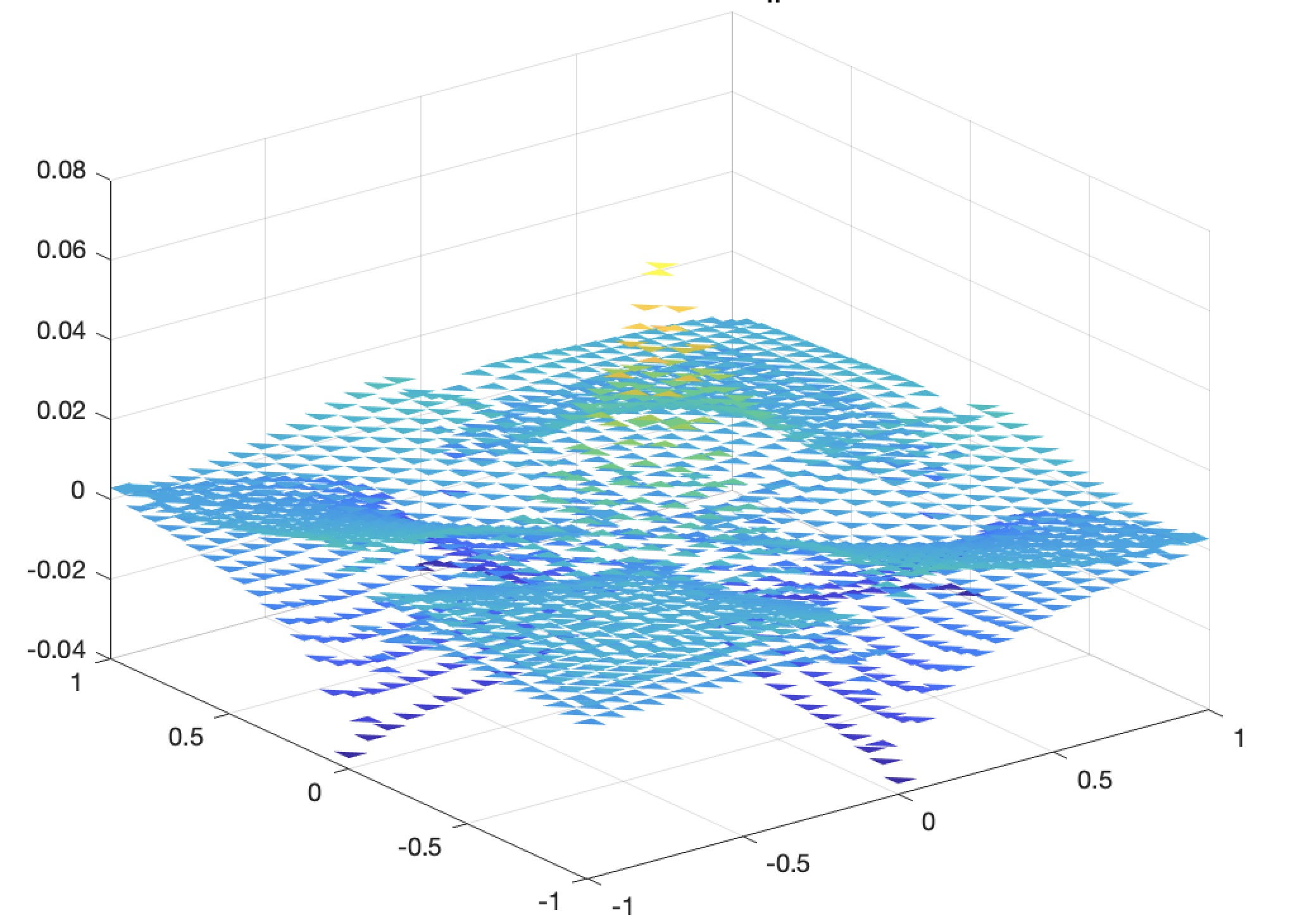}}
\end{tabular}
\caption{Test Case 2: Numerical error for Lagrange multiplier when $C^0$-$P_2(T)/[P_1(\pT)]^2/P_0(T)$ element is applied: left figure is without the term $c(\cdot, \cdot)$ proposed in \cite{wwnondiv}; right figure is with the term $c(\cdot, \cdot)$ proposed in this paper.}
\label{test1-2}
 \end{figure}

\noindent{\bf Test Case 3.} Find $u$ satisfying 
\begin{equation}\label{EQ:NE:800}
\sum_{i, j=1}^2\left(\delta_{ij}+\frac{x_ix_j}{x_1^2+x_2^2}\right)
\partial_{ij}^2 u=f, \qquad \mbox{in} \ \Omega_i \ (i=1, 2).
\end{equation}
%It is obvious that the coefficients $a_{ij}$ fail to be continuous at the point $(0, 0)$ for $i, j=1, 2$. 
For the case of $\alpha>1$, the exact solution $u=|x|^\alpha$ has $H^{1+\alpha-\tau}(\Omega)$ regularity for arbitrarily small $\tau>0$ and the load function is $f=(2\alpha^2-\alpha)|x|^{\alpha-2}$. The Cord\`es condition holds true with $\varepsilon=4/5$.

Tables \ref{NE:TRI:test2-2}-\ref{NE:TRI:test2-6} present the numerical results of the M-PDWG scheme on the domain $\Omega_1=(0,1)^2$. It is clear that the coefficient matrix $a=(a_{ij})_{2\times 2}$ is continuous in the interior of the domain $\Omega_1$, but it fails to be continuous at the corner point $(0,0)$. Note that the exact solution $u=|x|^{1.6}$ has $H^{2.6-\tau}(\Omega)$ regularity for arbitrarily small $\tau>0$.  The numerical approximation indicates that the convergence rates for $\be_g$ and $\gamma_h$ in the discrete $L^2$ norm are of orders ${\cal O}(h^{1.6})$ and ${\cal O}(h^{0.6})$, respectively, which are consist with the theoretical results.  The convergence rate for $e_0$ in the discrete $L^2$ norm seems to be of an order  ${\cal O}(h^{2})$, for which there is no theory available to
apply.

Figures \ref{test2-1}-\ref{test2-2} shows the numerical error $\gamma_h$ for the $C^0$-$P_2(T)/[P_1(\pT)]^2/P_1(T)$ element and the $C^0$-$P_2(T)/[P_1(\pT)]^2/P_0(T)$ element on the domain $\Omega_1$ respectively, compared with the  PDWG scheme proposed in \cite{wwnondiv}.

\begin{table}[h!]
\begin{center}
\caption{Test Case 3: Convergence rates for  $C^0$-
$P_2(T)/[P_1(\pT)]^2/P_1(T)$ element on $\Omega_1$.}\label{NE:TRI:test2-2}
\begin{tabular}{|c|c|c|c|c|c|c|}
\hline
$1/h$        & $\|e_0\|_0 $ & order &  $\|\be_g \|_0 $  & order  &   $\|\gamma_h\|_0$  & order  \\
 \hline
1&	0.06193 &&0.7395&&1.408 &
\\
\hline
2&	0.008210 &	2.915 &	0.1116&	2.729&	0.3570 &	1.980
\\
\hline
4&	0.001760 &	2.222 &	0.04270 &	1.385 &	0.2169 &	0.7190
\\
\hline
8&	4.30E-04	&2.034&	0.01483 &	1.526 &0.1351&	0.6833 
\\
\hline
16&	1.05E-04&	2.035 &	0.005024 &	1.562	&0.08752 &	0.6260
\\
\hline
32&	2.55E-05&	2.042&	0.001681&	1.580&	0.05735 &	0.6098
\\
\hline
\end{tabular}
\end{center}
\end{table}

\begin{table}[h!]
\begin{center}
\caption{Test Case 3: Convergence rates for $C^0$-
$P_2(T)/[P_1(\pT)]^2/P_0(T)$ element on $\Omega_1$.}\label{NE:TRI:test2-6}
\begin{tabular}{|c|c|c|c|c|c|c|}
\hline
$1/h$        & $\|e_0\|_0 $ & order &  $\|\be_g \|_0 $  & order  &   $\|\gamma_h\|_0$  & order  \\
\hline
1	&0.003403	&&	0.4903	&&	0.0650 	&
\\
\hline
2&	0.007769 &	-1.1911 &	0.1774 &	1.467	&0.06253 & 0.05684 
\\
\hline
4&	0.002576 &	1.593&	0.06160 	&1.526	&0.04782 & 0.3870
\\
\hline
8&	7.83E-04&	1.719&	0.02099&	1.554&	0.03270 &	 0.5482
\\
\hline
16&	2.19E-04&	1.839&	0.007048 &	1.574 &	0.02183 &	 0.5832
\\
\hline
32&	5.84E-05	&1.906&	0.002349&	1.585 	&0.01447 & 0.5930 
\\
\hline
\end{tabular}
\end{center}
\end{table}

\begin{figure}[h]
\centering
\begin{tabular}{cc}
\resizebox{2.2in}{2in}{\includegraphics{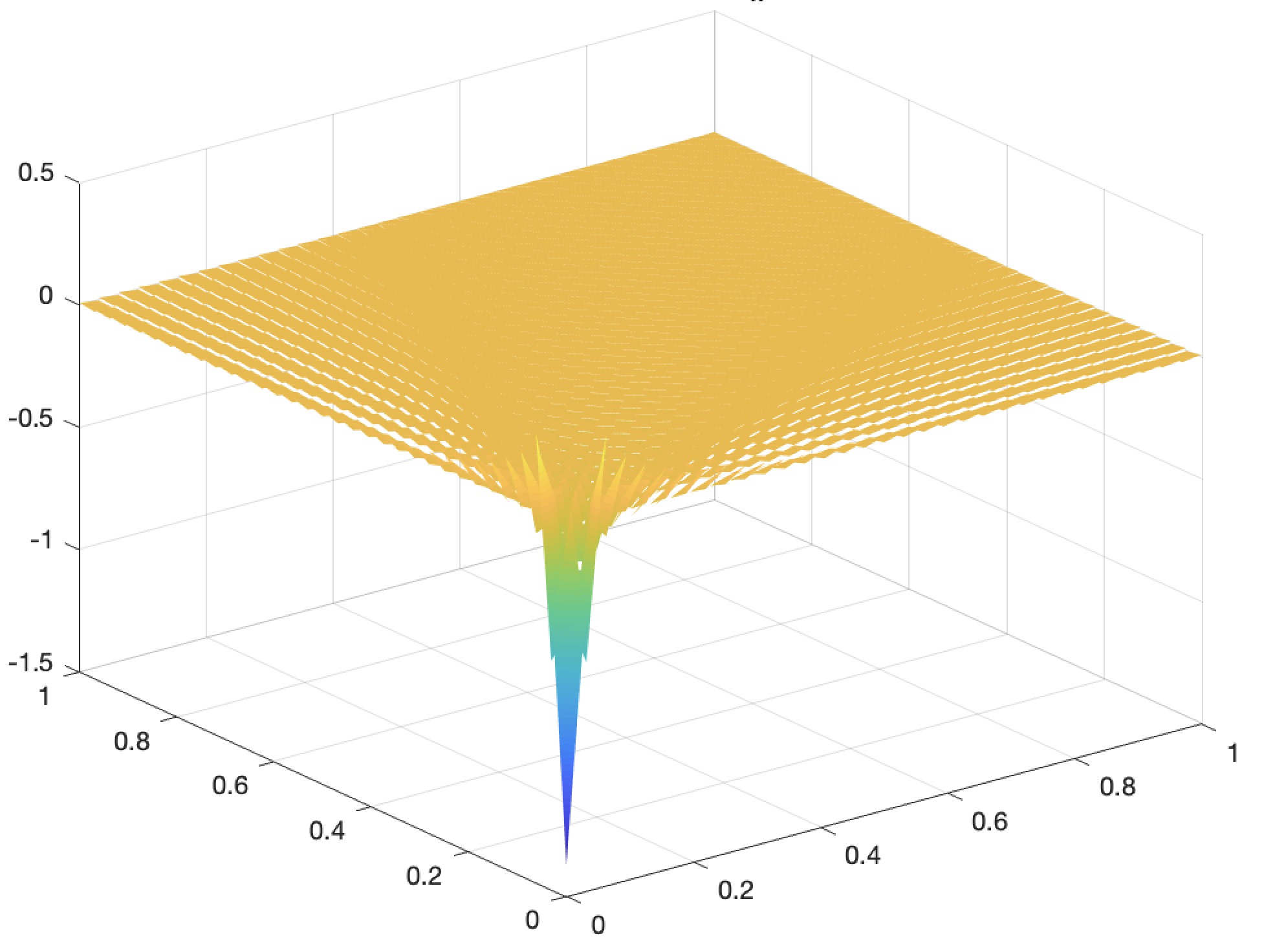}}
\resizebox{2.2in}{2in}{\includegraphics{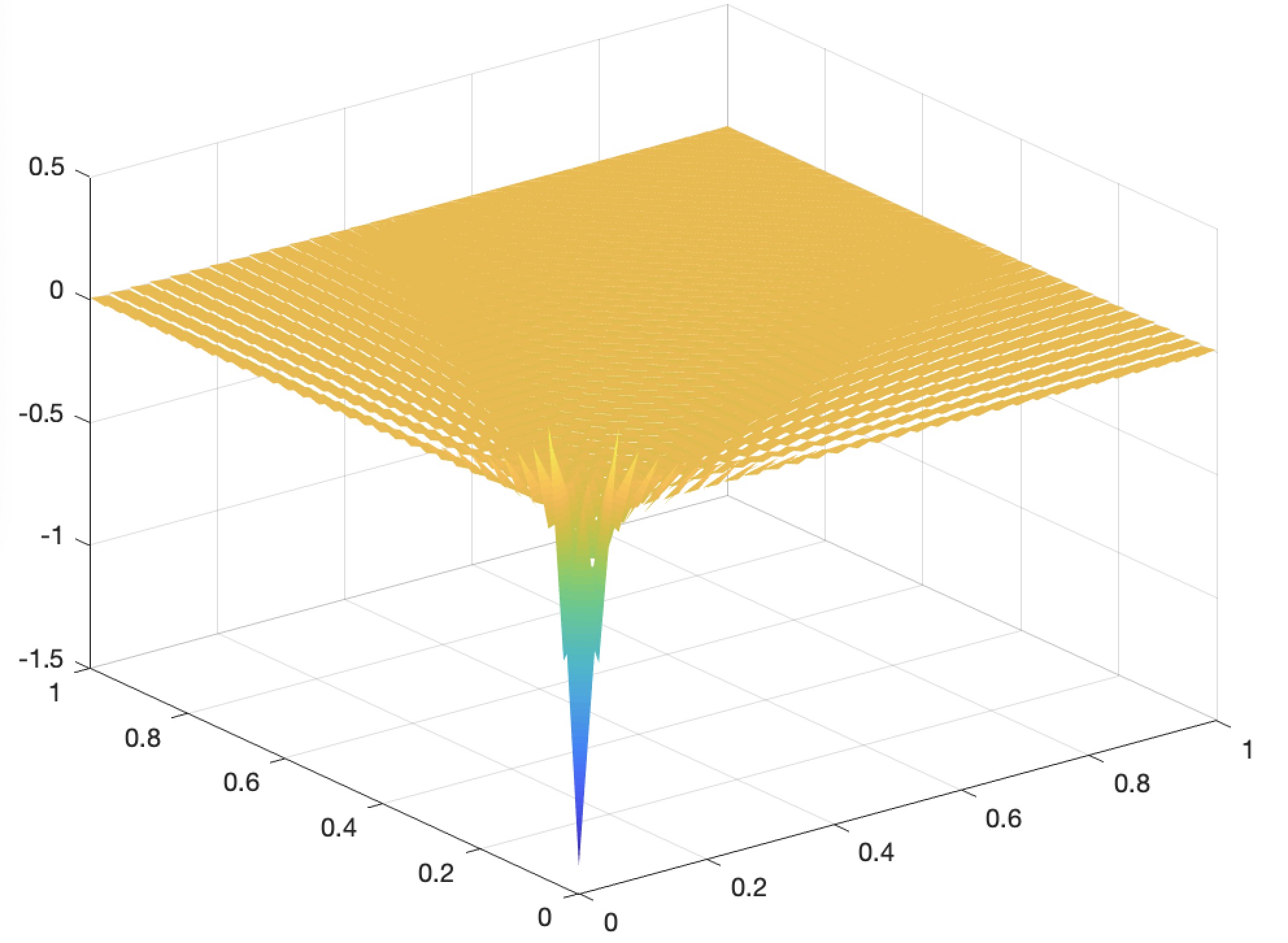}}
\end{tabular}
\caption{Test Case 3: Numerical error for Lagrange multiplier when $C^0$-$P_2(T)/[P_1(\pT)]^2/P_1(T)$ element is applied on $\Omega_1$: left figure is without the term $c(\cdot, \cdot)$ proposed in \cite{wwnondiv}; right figure is with the term $c(\cdot, \cdot)$ proposed in this paper.}
\label{test2-1}
 \end{figure}

\begin{figure}[h]
\centering
\begin{tabular}{cc}
\resizebox{2.2in}{2in}{\includegraphics{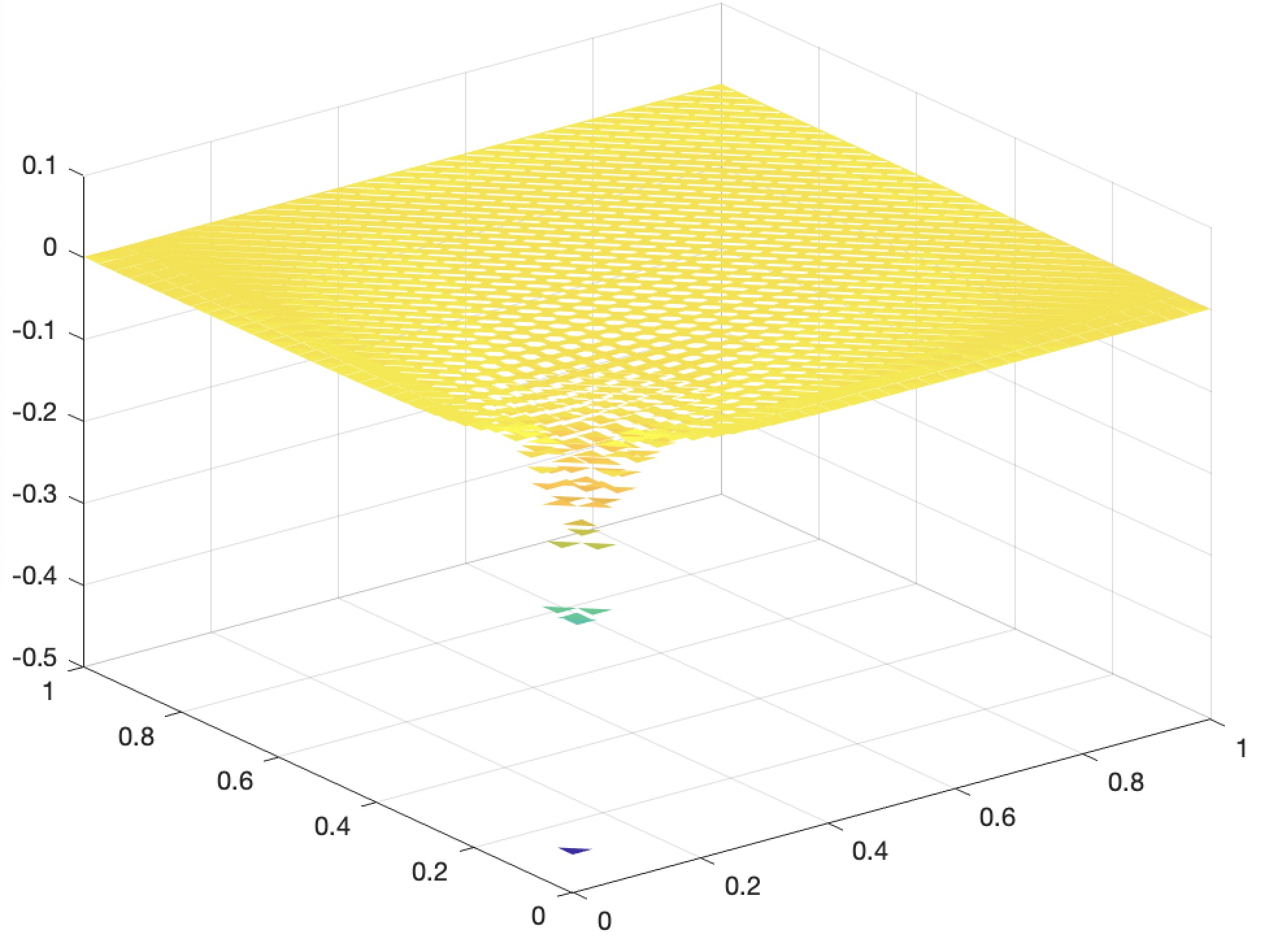}}
\resizebox{2.2in}{2in}{\includegraphics{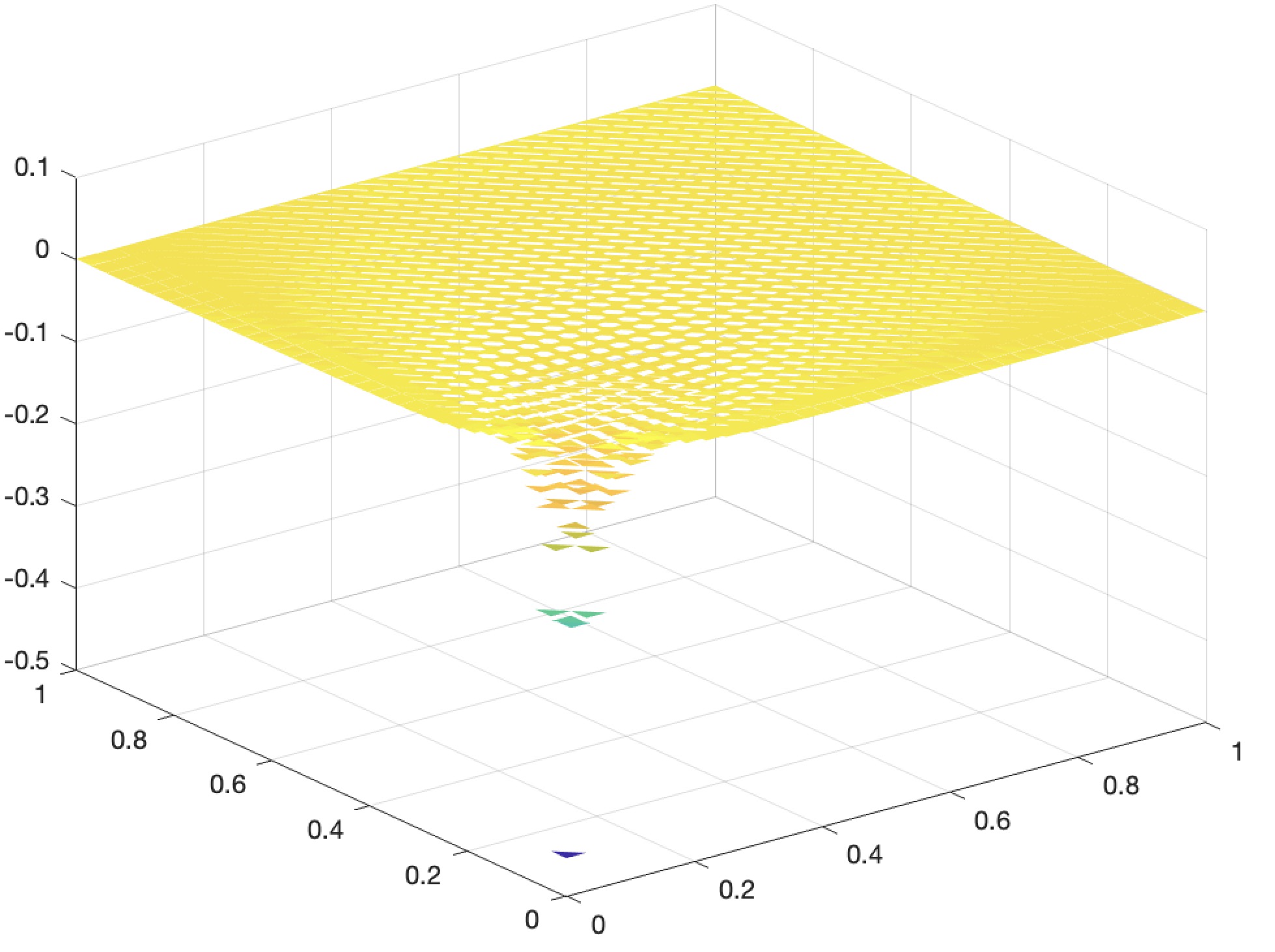}}
\end{tabular}
\caption{Test Case 3: Numerical error for Lagrange multiplier when $C^0$-$P_2(T)/[P_1(\pT)]^2/P_0(T)$ element is applied on $\Omega_1$: left figure is without the term $c(\cdot, \cdot)$ proposed in \cite{wwnondiv}; right figure is with the term $c(\cdot, \cdot)$ proposed in this paper.}
\label{test2-2}
 \end{figure}

Tables \ref{NE:TRI:test3-2}-\ref{NE:TRI:test3-6} demonstrate the
numerical performance of the M-PDWG scheme (\ref{2})-(\ref{32}) for the test equation (\ref{EQ:NE:800}) in the domain $\Omega_2=(-1,1)^2$. The coefficient matrix $a=(a_{ij})_{2\times 2}$ is discontinuous at the center point $(0, 0)$ of the domain $\Omega_2$ so that the duality argument in the convergence theory is not applicable. We observe from Tables \ref{NE:TRI:test3-2}-\ref{NE:TRI:test3-6} that the numerical results are less accurate than the case of $\Omega_1=(0,1)^2$ presented in Tables \ref{NE:TRI:test2-2}-\ref{NE:TRI:test2-6}. The convergence rate for $\gamma_h$ in the $L^2$ norm is of an order ${\cal O}(h^{0.6})$, which is consistent with the theory; while the convergence rates for $e_0$ and $\be_g$ in the $L^2$ norm are both of an order ${\cal O}(h)$ or slightly higher.

Figures \ref{test2-3}-\ref{test2-4} shows the numerical error $\gamma_h$ for the $C^0$-$P_2(T)/[P_1(\pT)]^2/P_1(T)$ element and the $C^0$-$P_2(T)/[P_1(\pT)]^2/P_0(T)$ element on the domain $\Omega_2$ respectively, compared with the PDWG scheme proposed in \cite{wwnondiv}.

\begin{table}[h!]
\begin{center}
\caption{Test Case 3: Convergence rates for $C^0$-
$P_2(T)/[P_1(\pT)]^2/P_1(T)$ element on $\Omega_2$.}\label{NE:TRI:test3-2}
\begin{tabular}{|c|c|c|c|c|c|c|}
\hline
$2/h$        & $\|e_0\|_0 $ & order &  $\|\be_g \|_0 $  & order  &   $\|\gamma_h\|_0$  & order  \\
 \hline
1	&0.8998	&&	1.207&&	0.4146&
\\
\hline
2	&0.7142&	0.3333&	1.808&	-0.5834 &	2.289	&-2.465
\\
\hline
4&	0.1928 &	1.889&	1.244&	0.5394 	&4.685 	&-1.034
\\
\hline
8&	0.04503 &	2.098 &	0.0967 &	3.685	&0.5329&	3.136 
\\
\hline
16&	0.02497 &	0.8506&	0.05352&	0.8540 &0.3078	&0.7919
\\
\hline
32&	0.01242 &	1.007 &	0.02806&	0.9316&0.1958&	0.6526
\\
\hline
\end{tabular}
\end{center}
\end{table}

\begin{table}[H]
\begin{center}
\caption{Test Case 3: Convergence rates for $C^0$-
$P_2(T)/[P_1(\pT)]^2/P_0(T)$ element on $\Omega_2$.}\label{NE:TRI:test3-6}
\begin{tabular}{|c|c|c|c|c|c|c|}
\hline
$2/h$        & $\|e_0\|_0 $ & order &  $\|\be_g \|_0 $  & order  &   $\|\gamma_h\|_0$  & order  \\
\hline
1	&6.82E-01	&&	0.5800 	&&	0.1091 	&
\\
\hline
2&	6.13E-01&	0.1518&	0.7084&	-0.2884 &	0.08120 &	0.4271
\\
\hline
4&	2.54E-01&	1.273&	0.4067 &	0.8004 &	0.05057 &	0.6831 
\\
\hline
8&	1.12E-01&	1.175&	0.2177&	0.9018 &	0.04179&	0.2753
\\
\hline
16&	5.12E-02&	1.137&	0.1101 &	0.9829	&0.02969 &	0.4930
\\
\hline
32	&0.02354&	1.120&	0.05402 &	1.028	&0.02011 	& 0.5620
\\
\hline
\end{tabular}
\end{center}
\end{table}

\begin{figure}[h]
\centering
\begin{tabular}{cc}
\resizebox{2.2in}{2in}{\includegraphics{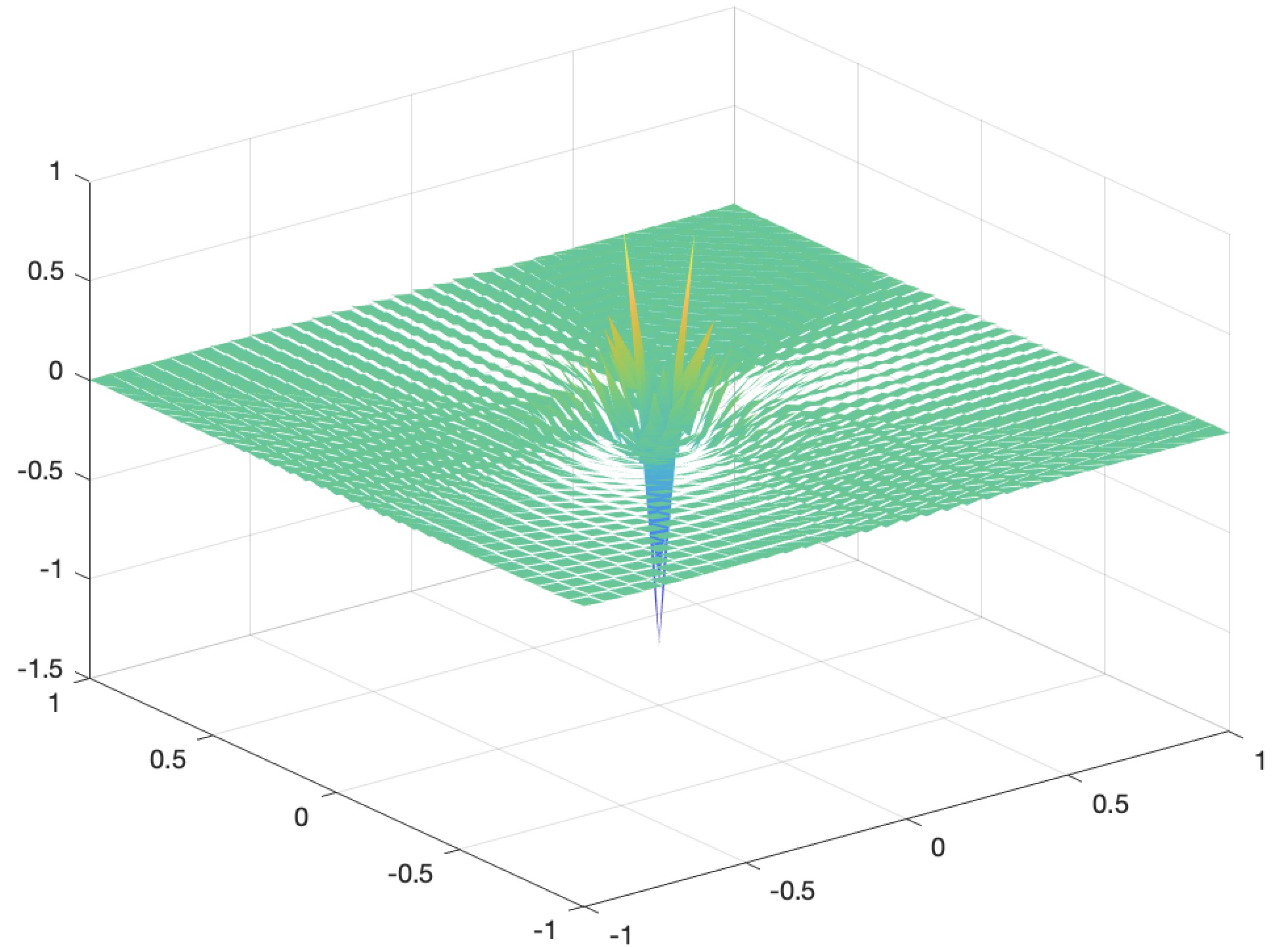}}
\resizebox{2.2in}{2in}{\includegraphics{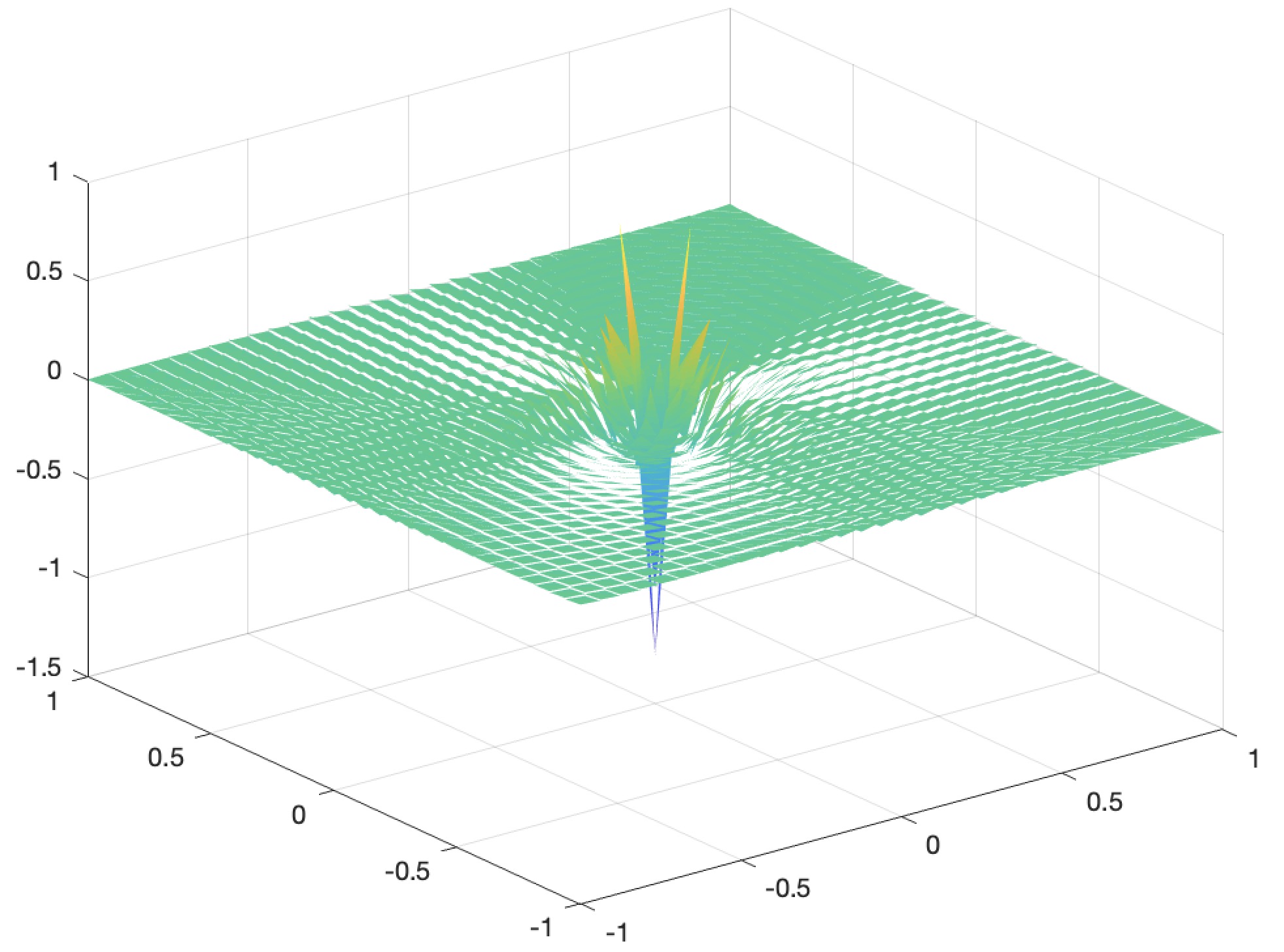}}
\end{tabular}
\caption{Test Case 3: Numerical error for Lagrange multiplier when $C^0$-$P_2(T)/[P_1(\pT)]^2/P_1(T)$ element is applied on $\Omega_2$: left figure is without the term $c(\cdot, \cdot)$ proposed in \cite{wwnondiv}; right figure is with the term $c(\cdot, \cdot)$ proposed in this paper.}
\label{test2-3}
 \end{figure}

\begin{figure}[h]
\centering
\begin{tabular}{cc}
\resizebox{2.2in}{2in}{\includegraphics{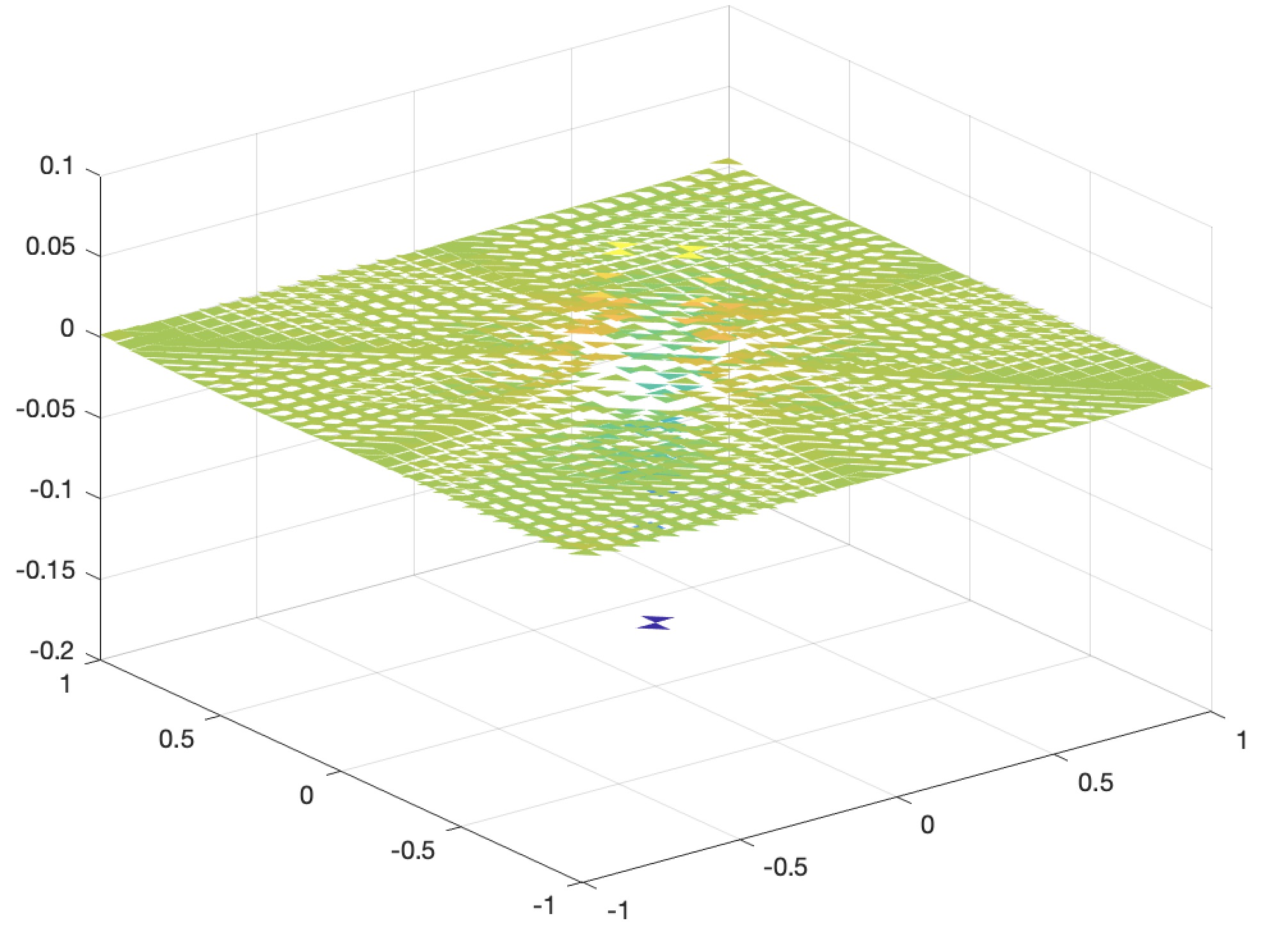}}
\resizebox{2.2in}{2in}{\includegraphics{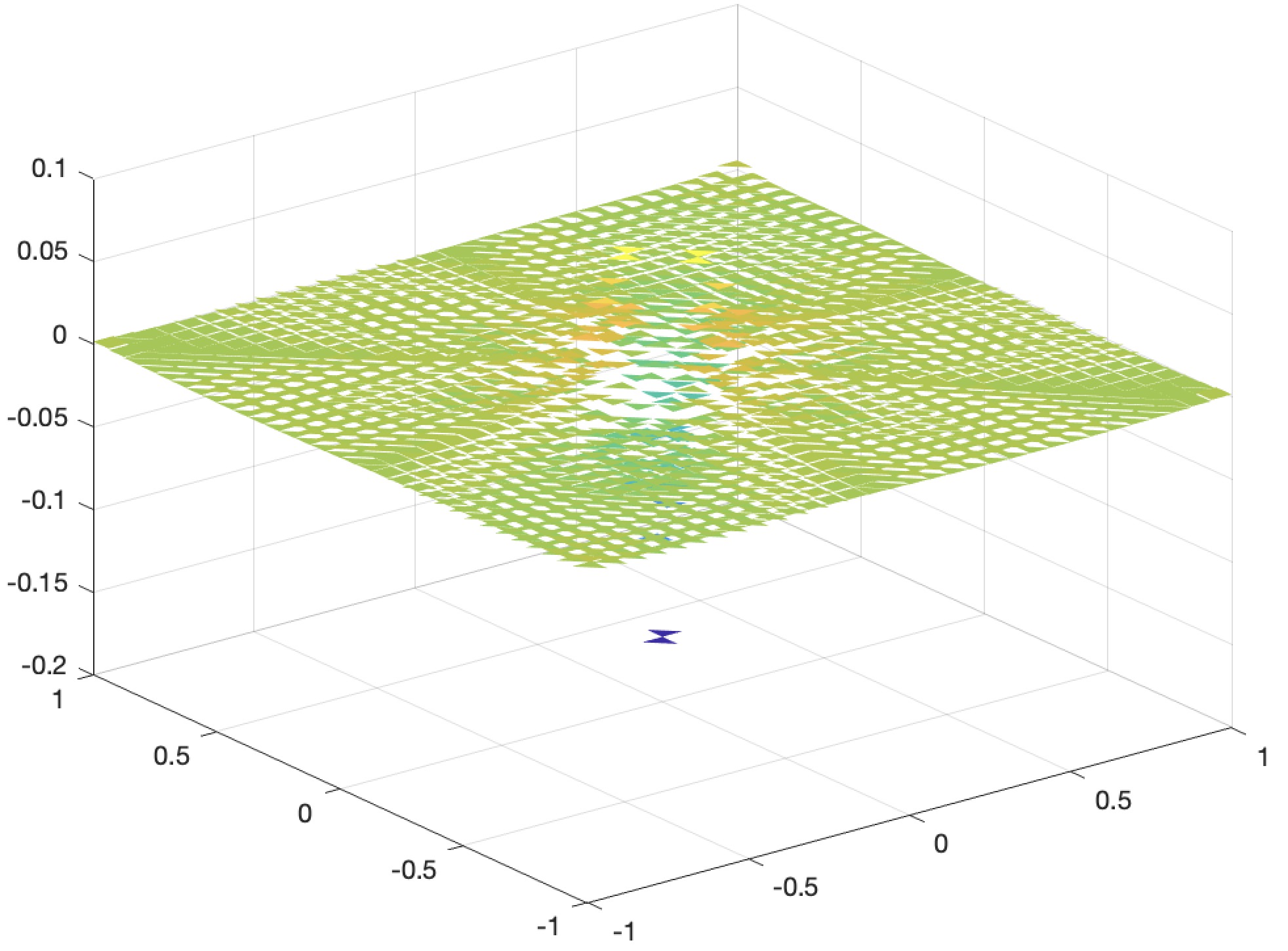}}
\end{tabular}
\caption{Test Case 3: Numerical error for Lagrange multiplier when $C^0$-$P_2(T)/[P_1(\pT)]^2/P_0(T)$ element is applied on $\Omega_2$: left figure is without the term $c(\cdot, \cdot)$ proposed in \cite{wwnondiv}; right figure is with the term $c(\cdot, \cdot)$ proposed in this paper.}
\label{test2-4}
 \end{figure}
 
 \section*{Acknowledgement}
 I would like to express my gratitude to  Dr. Junping Wang for his valuable discussion and suggestions. 

\newpage

\end{document}